\documentclass[10pt]{amsart}

\usepackage[utf8]{inputenc}
\usepackage[T1]{fontenc}
\usepackage{lmodern}
\usepackage[a4paper,textwidth=15.2cm,textheight=23.7cm,centering]{geometry}
\usepackage{amsmath,amssymb,amsthm,mathtools}
\usepackage{graphicx}
\usepackage{mathrsfs}
\usepackage{enumitem}
\usepackage[colorlinks=true,citecolor=blue,linkcolor=blue,urlcolor=blue]{hyperref}
\usepackage[all]{nowidow}

\numberwithin{equation}{section}
\allowdisplaybreaks

\setlist{nosep,leftmargin=*}
\makeatletter
\@namedef{subjclassname@2020}{\textup{2020} Mathematics Subject Classification}

\makeatother

\makeatletter
\def\thm@space@setup{%
  \thm@preskip=5pt plus 1pt minus 1pt
  \thm@postskip=5pt plus 1pt minus 1pt}
\makeatother

\newtheorem{theorem}{Theorem}[section]
\newtheorem{proposition}[theorem]{Proposition}
\newtheorem{lemma}[theorem]{Lemma}
\newtheorem{corollary}[theorem]{Corollary}

\theoremstyle{definition}
\newtheorem{definition}[theorem]{Definition}
\newtheorem{remark}{Remark}[section]

\DeclareMathOperator{\tr}{tr}

\DeclareMathOperator{\spanop}{span}

\newcommand{\R}{\mathbb R}
\newcommand{\C}{\mathbb C}

\newcommand{\B}{\mathbb B}

\newcommand{\norm}[1]{\left\|#1\right\|}

\title[A Lewy theorem for harmonic quasiregular maps]{A Lewy theorem for harmonic quasiregular mappings in three-space}

\author{David Kalaj}
\address{David Kalaj, Faculty of Natural Sciences and Mathematics, University of Montenegro, Podgorica, Montenegro}
\email{davidk@ucg.ac.me}

\author{Jian-Feng Zhu}
\address{Jian-Feng Zhu, Department of Mathematics, Shantou University, Shantou, Guangdong 515063, P. R. China}
\email{flandy@stu.edu.cn}

\date{July 2026}

\subjclass[2020]{Primary 30C65, 31B05, Secondary 35J05, 57M12, 58E20}

\keywords{Harmonic mappings, quasiregular mappings, quasiconformal mappings, Lewy's theorem, Jacobian nonvanishing, branch set, homogeneous harmonic polynomials, bounded distortion, Riemannian harmonic maps}

\begin{document}

\begin{abstract}
Lewy's classical theorem asserts that a one-to-one planar harmonic mapping has nonvanishing Jacobian. We prove a three-dimensional bounded-distortion analogue: if
\[
        f:\Omega\subset \mathbb R^3\to \mathbb R^3
\]
is nonconstant, sense-preserving, quasiregular, and harmonic componentwise, then \(J_f>0\) throughout \(\Omega\). Thus, harmonic quasiconformal mappings between domains in three-space are local harmonic diffeomorphisms.

The new point is the Lewy-type differential conclusion \(J_f\neq0\), not merely topological local invertibility, which is already known for sufficiently smooth quasiregular mappings. The proof is by blow-up. A hypothetical zero of \(J_f\) produces a nonconstant homogeneous harmonic polynomial quasiregular mapping \(P:\mathbb R^3\to\mathbb R^3\) of degree \(m>1\). We exclude such homogeneous blow-ups by a second-order trace identity for \(J_P|_{S^2}\). After normalizing the first jet at a positive minimum, the identity gives a negative spherical trace, contradicting the second derivative test at a minimum. We also prove that no homogeneous harmonic polynomial mapping \(\mathbb R^3\to\mathbb R^3\) of degree greater than one is injective, derive an affine Liouville theorem for entire harmonic quasiregular mappings in \(\mathbb R^3\), and obtain a local analytic Riemannian variant between oriented three-manifolds.
\end{abstract}

\maketitle

\section{Introduction}

\subsection{Lewy's theorem, higher-dimensional failure, and bounded distortion}

Lewy's theorem \cite{Lewy1936} asserts that a one-to-one complex-valued
harmonic mapping has nonvanishing Jacobian. In particular, every planar
harmonic homeomorphism is a local diffeomorphism. This result is one of the
classical starting points of the theory of planar harmonic mappings: a
harmonic homeomorphism in the plane cannot fold infinitesimally. In the planar
case this phenomenon is closely tied to the special structure of harmonic
functions as real parts of holomorphic functions, and to the fact that the
Jacobian can be written in terms of the analytic and anti-analytic parts.
It is therefore natural to ask which part of Lewy's theorem is topological,
which part is analytic, and which part is genuinely two-dimensional.

In dimensions at least three, the direct analogue of Lewy's theorem is false.
Wood \cite{Wood1991} constructed harmonic homeomorphisms with vanishing
Jacobian. Thus, harmonicity and global injectivity alone do not force local
invertibility, once the target and domain have dimension at least three. This
failure should be contrasted with Lewy's later theorem for harmonic gradients
in three dimensions \cite{Lewy1968}, and with the subsequent work of Gleason
and Wolff on harmonic gradient maps \cite{GleasonWolff}. These results show
that additional analytic structure may restore nondegeneracy, but they do not
cover general harmonic vector-valued mappings.

The purpose of this paper is to prove that, in dimension three, Lewy's
nondegeneracy conclusion is restored by a different structural assumption:
bounded distortion. We use the standard convention that a nonconstant mapping
\(f\in W^{1,n}_{\operatorname{loc}}(\Omega,\R^n)\) is sense-preserving
\(K\)-quasiregular if \(J_f\ge0\) a.e. in $\Omega$ and
\begin{equation}\label{distortion}
        \norm{Df(x)}^n\le KJ_f(x)
        \qquad\text{for a.e. }x\in\Omega .
\end{equation}
A quasiconformal mapping is a homeomorphic quasiregular mapping. Quasiregular
mappings are the higher-dimensional analogue of holomorphic functions with
bounded distortion. In particular, nonconstant quasiregular mappings are open
and discrete, and they enjoy a compactness theory due to Reshetnyak, see
\cite{Reshetnyak,Rickman,Vaisala}. In the present paper, the inequality
\eqref{distortion} plays a very concrete role: at a smooth point, if
\(J_f=0\), then \(Df=0\). Hence, a zero of the Jacobian is not merely a loss of
one singular value, it is a genuine critical point of the full differential.

We recall at this point a general smoothness theorem from quasiregular mapping
theory. A theorem of Martio--Rickman--V\"ais\"al\"a implies that every
nonconstant \(C^{n/(n-2)}\)-smooth quasiregular mapping in \(\R^n\),
\(n\ge3\), is locally homeomorphic. In particular, every nonconstant
\(C^3\)-smooth quasiregular mapping in \(\R^3\) has empty branch set. See
Rickman's monograph \cite[p. 12]{Rickman}, the original topological and metric
work \cite{MRV}, and the discussion in Bonk--Heinonen \cite{BonkHeinonen}. See
also Kaufman--Tyson--Wu \cite{KaufmanTysonWu} for a clear formulation of the
Martio--Rickman--V\"ais\"al\"a theorem and higher-dimensional sharpness results.
Since harmonic quasiregular mappings are real analytic, the topological
absence of branch points in the smooth three-dimensional setting is therefore
not the new part of our result. Our contribution is the stronger differential
Lewy conclusion: the Jacobian itself cannot vanish.

This distinction is essential. A smooth quasiregular local homeomorphism may
have critical points. For example, the radial stretch
\(x\mapsto |x|^2x\) is a polynomial quasiconformal homeomorphism of \(\R^n\),
\(n\ge3\), but its differential and Jacobian vanish at the origin. Thus, local
homeomorphism, even with smoothness and bounded distortion, does not imply
Jacobian nonvanishing. The harmonicity assumption in the theorem below rules
out precisely this type of homogeneous critical behavior.

\begin{theorem}
\label{thm-main}
Let \(\Omega\subset\R^3\) be a domain, and let
\(f:\Omega\to\R^3\) be a nonconstant sense-preserving quasiregular mapping
whose coordinate functions are harmonic. Then
\[
        J_f(x)>0,
        \qquad x\in\Omega .
\]
Consequently, \(f\) is locally a real-analytic diffeomorphism. In particular,
if \(f:\Omega\to\Omega'\) is a quasiconformal harmonic homeomorphism between
three-dimensional domains, then \(J_f\ne0\) everywhere.
\end{theorem}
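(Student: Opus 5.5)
We argue by contradiction via blow-up, as announced in the abstract. Suppose \(J_f(x_0)=0\) for some \(x_0\in\Omega\). Since \(f\) is harmonic it is real analytic, so the distortion inequality \eqref{distortion} holds pointwise everywhere by continuity. Hence \(\norm{Df(x_0)}^3\le KJ_f(x_0)=0\), and \(Df(x_0)=0\). Expand \(f(x_0+y)-f(x_0)=\sum_{k\ge m}P_k(y)\) into homogeneous harmonic polynomial maps, where \(P_m\not\equiv0\) is the lowest nonzero term. The term \(P_m\) exists because \(f\) is nonconstant and analytic, and \(m\ge2\). The rescalings \(f_r(y)=r^{-m}\bigl(f(x_0+ry)-f(x_0)\bigr)\) converge to \(P:=P_m\) in \(C^1_{\mathrm{loc}}(\R^3)\). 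The inequality \(\norm{Df_r}^3\le K J_{f_r}\) is scale invariant, so it passes to the limit and gives \(\norm{DP}^3\le KJ_P\) on \(\R^3\). Thus \(P\) is a nonconstant, degree-\(m\), homogeneous, harmonic, sense-preserving \(K\)-quasiregular polynomial map. It therefore suffices to show that no such \(P\) with \(m>1\) exists. Once this is done, \(J_f>0\) everywhere. The inverse function theorem together with analyticity gives the local real-analytic diffeomorphism statement, and the quasiconformal case is a special case.

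To exclude \(P\), I would first use that \(P\) is nonconstant quasiregular, hence open and discrete, so \(J_P>0\) a.e. Next I would rule out zeros of \(DP\) on \(S^2\). At such a point, homogeneity makes the whole ray critical, and this should contradict discreteness or the Martio--Rickman--V\"ais\"al\"a local homeomorphism property, since the branch set would be empty while the Jacobian vanishes along a line. If that argument gets delicate, I would repeat the blow-up at a point of the ray away from the origin and induct on the structure of the tangent maps. With this in hand, \(J_P|_{S^2}\) is a positive real-analytic function, homogeneous of degree \(3(m-1)\), and it attains a positive minimum at some \(e\in S^2\).

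The core step, and the main obstacle, is a second-order identity at \(e\). Using rotations in the domain and linear maps in the target, which preserve harmonicity and quasiregularity, I would normalize the first jet so that \(DP(e)=I\) (after scaling), with \(e=e_3\). Then I would compute \(\Delta_{S^2}J_P(e)\) from the expansion of \(P\) around \(e\). The tools are: the Euler relations \(x\cdot\nabla P=mP\) and \(x\cdot\nabla(DP)=(m-1)DP\); the harmonicity of each entry of \(DP\); and the formula \(\Delta J=\sum_i\Delta(\det)\)-type expansion. In that expansion, the terms linear in the second derivatives of \(DP\) vanish because \(\Delta\,\partial_jP^i=0\), which leaves terms quadratic in \(D^2P\). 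The spherical Laplacian is \(\Delta_{S^2}u=\Delta u-\partial_r^2u-2\partial_ru\) at \(r=1\). Since \(J_P\) is homogeneous of degree \(3(m-1)\), this gives \(\Delta_{S^2}J_P(e)=\Delta J_P(e)-3(m-1)(3m-1)J_P(e)\). My plan is to show that \(\Delta J_P(e)\) equals a sum of \(2\times2\) minor-type products of \(D^2P\). Combined with the first-order conditions at a minimum, namely that the tangential gradient of \(J_P\) vanishes, i.e.\ \(\sum_i\partial_{ij}P^i(e)=0\) for tangential \(j\) in the normalization \(DP(e)=I\), and with the Euler relation \(D^2P(e)[e,\cdot]=(m-1)DP(e)\), this quantity should be bounded above by something strictly smaller than \(3(m-1)(3m-1)\) whenever \(m>1\). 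That would give \(\Delta_{S^2}J_P(e)<0\), which contradicts the second derivative test at a minimum.

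I expect the bookkeeping of this trace identity to be the hardest part. One has to organize the quadratic form in the remaining free second derivatives, namely the tangential block of \(D^2P(e)\) subject to tracelessness, and check that it is nonpositive. The natural attempt is to write it as the negative of a sum of squares plus the radial contribution.
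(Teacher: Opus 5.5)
Your architecture is the same as the paper's. You blow up at a zero of $J_f$ to get a homogeneous harmonic quasiregular tangent map $P$ of degree $m\ge2$, show $J_P>0$ on $S^2$, and contradict the second-derivative test at a positive minimum after normalizing the first jet. Passing $\norm{Df_r}^3\le KJ_{f_r}$ to the $C^1_{\mathrm{loc}}$ limit is a fine substitute for Reshetnyak compactness.

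\textbf{The zero-exclusion step fails as written.} A ray of critical points contradicts neither discreteness nor the Martio--Rickman--V\"ais\"al\"a local homeomorphism property. An empty branch set is compatible with $J=0$; this is exactly the point of the paper's example $|x|^2x$. So ``branch set empty while the Jacobian vanishes along a line'' is no contradiction, and the fallback induction on tangent maps is not an argument. The missing idea is a single application of Euler's identity. Zeros of $J_P$ and of $DP$ coincide by the pointwise distortion inequality. If $DP(\theta)=0$, then $mP(\theta)=DP(\theta)\theta=0$, hence $P(r\theta)=r^mP(\theta)=0$ for all $r>0$. So $P^{-1}(0)$ contains a ray, contradicting discreteness.

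\textbf{The core step is only a plan, and its constant is wrong.} $J_P$ is homogeneous of degree $k=3(m-1)$, so $\Delta_{S^2}J_P(e)=\Delta J_P(e)-k(k+1)J_P(e)$ with $k(k+1)=3(m-1)(3m-2)$, not $3(m-1)(3m-1)$. With this corrected, your ambient-coordinate route does close. It is a legitimate variant of the paper's graph-coordinate identity, which writes $j=mws$ with $s=\det(F_1,F_2,F)$ and handles third-order terms via the differentiated spherical eigenvalue equation. You instead use $\tr(\Delta DP)=0$ plus homogeneity.

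Here is the computation. Normalize $DP(e)=I$, and set $h^i_{jk}=\partial_{jk}P^i(e)$ and $M_k=\partial_kDP(e)$. Then $\Delta J_P(e)=\sum_k[(\tr M_k)^2-\tr(M_k^2)]$. The constraints are:
\begin{enumerate}
\item Euler gives $h^i_{3k}=(m-1)\delta_{ik}$.
\item Harmonicity gives $h^1_{22}=-h^1_{11}=-a$, $h^2_{22}=-h^2_{11}=-c$, and $h^3_{11}+h^3_{22}=-(m-1)$.
\end{enumerate}
Put $b=h^1_{12}$ and $d_0=h^2_{12}$. The $h^3$ entries cancel, leaving $\Delta J_P(e)=4(ad_0-bc)+8(m-1)^2$. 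The minimum conditions are $\tr M_1=a+d_0=0$ and $\tr M_2=b-c=0$, and they give
\[
\Delta_{S^2}J_P(e)=-4(a^2+b^2)-(m-1)(m+2)<0 .
\]
This is the paper's formula divided by $m$: the paper's normalization $P(N)=e_3$ makes $DP(N)=\operatorname{diag}(1,1,m)$, so $J_P(N)=m$. Until you actually carry out this computation, the decisive step of your proof is only conjectured.
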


Thus, Theorem~\ref{thm-main} should be viewed as a Jacobian nonvanishing
theorem, or a bounded-distortion form of Lewy's theorem. The branch-set
consequence is not the novelty in the smooth three-dimensional quasiregular
class. The novelty is the differential conclusion \(J_f>0\). No smallness condition is imposed on the distortion
constant. This is important, because many positive higher-dimensional Jacobian
estimates for harmonic quasiconformal mappings require additional quantitative
hypotheses, for example the bounds studied in \cite{BozinMateljevic}.
Here the conclusion is qualitative and holds for every finite quasiregular
distortion in dimension three. Related work on planar and spatial harmonic
quasiconformal mappings, boundary correspondence, and mappings with controlled
Laplacian includes
\cite{AstalaIwaniecMartin,AstalaManojlovic,Pavlovic2002,Kalaj2015,KalajSaksman2019}.
Martin's curved-metric form of Lewy's theorem \cite{Martin2016} is another
example in which additional geometric structure restores nondegeneracy.

The problem addressed here is therefore not a routine consequence of either
classical Lewy theory or general quasiregular topology. General smooth
quasiregular theory gives local homeomorphism, but it does not rule out
critical local homeomorphisms. Harmonic mapping theory gives analytic Taylor
expansions, but, by Wood's example, it does not by itself prevent the Jacobian
from vanishing. The result below identifies the missing mechanism: bounded
distortion transfers a zero of the Jacobian to a full critical point, and
harmonicity then forces the first tangent map to be a homogeneous harmonic
polynomial. The core of the paper is the observation that such a tangent map is
forbidden in dimension three by a second-order trace obstruction on the unit
sphere. This converts a Lewy-type question into a finite-dimensional spherical
first-jet problem and resolves it simultaneously for all homogeneous degrees.

Several nearby statements help explain the scope of the theorem. If one assumes
only that a harmonic map is a local diffeomorphism, then the Jacobian is
nonzero by definition, but this says nothing about whether local invertibility
or differential nondegeneracy can be forced from global or analytic hypotheses.
If one assumes global injectivity but no bounded distortion, Wood's construction
shows that the answer is negative. If one assumes quasiconformality, then the
map is a homeomorphism and has bounded distortion, and
Theorem~\ref{thm-main} gives the Lewy conclusion. The slightly stronger point
of the theorem is that the homeomorphism assumption is unnecessary for the
Jacobian conclusion: quasiregularity and harmonicity already force
\(J_f>0\) in the sense-preserving case.

This distinction is important for the structure of the proof. A quasiconformal
harmonic homeomorphism may a priori fail to be a local diffeomorphism if its
derivative degenerates. A smooth quasiregular mapping in \(\R^3\) is already
locally homeomorphic by the general theorem cited above, but it may still have
zero Jacobian. The theorem says that harmonicity prevents this differential
degeneracy. In this sense the result is a genuine Lewy-type strengthening of
smooth quasiregular local invertibility.

The theorem can also be compared with results for harmonic gradients. A map of
the form \(\nabla u\), where \(u\) is harmonic, has a symmetric derivative and
satisfies additional algebraic identities. Lewy's theorem for harmonic
gradients and the work of Gleason and Wolff exploit this special structure. In
our setting no symmetry of \(Df\) is assumed. The replacement is the
quasiregular distortion inequality and the spherical trace obstruction for the
leading homogeneous term. Thus, the result belongs simultaneously to the theory
of harmonic mappings and to quasiregular mapping theory, but the central new
calculation is neither a standard elliptic estimate, nor a standard topological
argument.

Finally, Theorem~\ref{thm-main} should be read as a local theorem. The affine
Liouville theorem for entire mappings is a consequence, not the source, of the
argument. The local statement is stronger, it applies in an arbitrary domain
and says that no critical point can occur, even before any global boundary or
properness assumption is imposed. This local character is what makes the
homogeneous blow-up method natural. It also makes the argument flexible under
changes of local coordinates. Since the leading part of the harmonic map
equation in normal coordinates is the Euclidean Laplacian, the same proof gives
an analytic Riemannian version for quasiregular harmonic maps between oriented
three-dimensional Riemannian manifolds, see
Corollary~\ref{cor-riemannian-lewy}.

\begin{remark}
The assumptions in Theorem~\ref{thm-main} have distinct functions. Harmonicity
makes the map real analytic and makes the first nonzero Taylor term a
homogeneous harmonic polynomial. Quasiregularity supplies the open-discrete
property and passes to locally uniform blow-up limits. The pointwise form of
bounded distortion forces \(Df(a)=0\) at a hypothetical zero of \(J_f\). The
orientation assumption is only a normalization, the corresponding statement for
orientation-reversing quasiconformal harmonic homeomorphisms follows by
reflecting the target.
\end{remark}

\subsection{The homogeneous obstruction}

The proof of Theorem~\ref{thm-main} is local and proceeds by blow-up. Suppose
that \(J_f(a)=0\). Since the coordinate functions of \(f\) are harmonic, they
are real analytic. Since \eqref{distortion} gives \(Df(a)=0\), the first
nonzero Taylor term of \(f-f(a)\) has degree \(m\ge2\) and
\[
        f(a+h)-f(a)=P_m(h)+O(|h|^{m+1}).
\]
The leading term \(P_m:\R^3\to\R^3\) is a nonconstant homogeneous harmonic
polynomial mapping. The rescalings
\[
        f_r(x)=\frac{f(a+rx)-f(a)}{r^m}
\]
converge locally uniformly to \(P_m\), and Reshetnyak compactness implies that
\(P_m\) is again quasiregular. Thus, a zero of the Jacobian of a harmonic
quasiregular map would produce a homogeneous harmonic quasiregular tangent
map. The main analytic work of the paper is to rule out precisely these
tangent maps in dimension three.

\begin{theorem}
\label{thm-homogeneous-qr-obstruction}
Let \(m>1\). There is no nonconstant homogeneous harmonic polynomial mapping
\[
        P:\R^3\to\R^3
\]
of degree \(m\) which is quasiregular. More precisely, there is no homogeneous
harmonic polynomial mapping of degree \(m>1\) whose Jacobian is strictly
positive on \(S^2\). The corresponding statement with strictly negative
Jacobian follows by reversing the orientation in the target.
\end{theorem}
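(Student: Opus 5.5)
The plan is to argue by contradiction at a minimum of the Jacobian on the sphere. Quasiregularity of a homogeneous polynomial map forces \(J_P>0\) on \(S^2\), since \eqref{distortion} gives \(Df=0\) wherever \(J_f=0\), and \(DP\) cannot vanish on a ray unless \(P\) degenerates. So it suffices to prove the second statement: no homogeneous harmonic \(P=(p_1,p_2,p_3)\) of degree \(m>1\) has \(J_P>0\) on \(S^2\).

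\textbf{Reduction to a spherical Laplacian.} \(J:=J_P\) is homogeneous of degree \(k=3(m-1)\). The polar form of the Laplacian gives, on \(S^2\),
\[
\Delta_{S^2}J=\Delta J-k(k+1)J .
\]
Let \(x_0\in S^2\) be a point where \(J|_{S^2}\) attains its positive minimum. After a rotation of the domain, \(x_0=e_3\). At a minimum, \(\nabla_{S^2}J(e_3)=0\) and \(\Delta_{S^2}J(e_3)\ge0\). The goal is to show
\[
\Delta J(e_3)<k(k+1)J(e_3),
\]
which contradicts \(\Delta_{S^2}J(e_3)\ge0\).

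\textbf{Normalization of the first jet.} Replace \(P\) by \(A\circ P\) with \(A\in GL^+(3)\). This keeps harmonicity and homogeneity and multiplies \(J\) by \(\det A>0\), so we may assume \(DP(e_3)=I\) and \(J(e_3)=1\). Set \(T_{ijk}=\partial_j\partial_kp_i(e_3)\) and \(M_k=(T_{ijk})_{i,j}=\partial_kDP(e_3)\). The data then satisfy three constraints.
\begin{itemize}
\item Euler's identity applied to \(\partial_jP\), which is homogeneous of degree \(m-1\), gives \(M_3=(m-1)I\).
\item The entries \(T_{ijk}\) are symmetric in \(j,k\).
\item Harmonicity gives \(\sum_jT_{ijj}=0\).
\item The first-order condition \(\partial_kJ(e_3)=\operatorname{tr}M_k=0\) holds for \(k=1,2\).
\end{itemize}

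\textbf{The trace identity.} Expand the determinant as a trilinear form in the harmonic gradients \(\nabla p_i\). The second-order terms in which a single column is differentiated twice drop out, because \(\Delta\nabla p_i=0\). What remains is
\[
\Delta J=2\sum_k\sum_{i<j}\bigl(\text{mixed terms with }\partial_k\text{ on columns }i,j\bigr).
\]
At a point where \(DP=I\), this becomes
\[
\Delta J(e_3)=\sum_{k=1}^3\bigl[(\operatorname{tr}M_k)^2-\operatorname{tr}(M_k^2)\bigr].
\]
The \(k=3\) term equals \(6(m-1)^2\), and the \(k=1,2\) terms equal \(-\operatorname{tr}(M_k^2)\). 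Since \(k(k+1)-6(m-1)^2=3m(m-1)\), the required inequality becomes
\[
\sum_{k=1,2}\operatorname{tr}(M_k^2)=\sum_{k=1,2}\sum_{i,j}T_{ijk}T_{jik}>-3m(m-1).
\]

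\textbf{The main obstacle.} The hardest step is this last inequality, because the \(M_k\) are not symmetric, so \(\operatorname{tr}(M_k^2)\) has no sign a priori. I would first separate the terms containing an index equal to \(3\), using \(T_{j3k}=T_{jk3}=(m-1)\delta_{jk}\). This turns those terms into explicit multiples of \((m-1)T_{3kk}\), which harmonicity and \(\operatorname{tr}M_k=0\) convert into constants times \((m-1)^2\). The remaining block \(i,j,k\in\{1,2\}\) is a quadratic form in finitely many free coefficients. For it, I would complete squares, using the symmetry \(T_{ijk}=T_{ikj}\) and the trace constraints, and show that it is bounded below by a quantity strictly above \(-3m(m-1)\).

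If that lower bound fails for arbitrary data, I would add further normalization freedom: rotations about \(e_3\) combined with compensating target maps that preserve \(DP(e_3)=I\). The point would be to diagonalize part of the \(T\) tensor before estimating. Once the strict inequality holds, \(\Delta_{S^2}J(e_3)<0\) at a minimum, which is a contradiction. The negative-Jacobian case follows by composing with a reflection in the target.
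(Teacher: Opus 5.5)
Your reduction is sound, and it takes a genuinely different, ambient route from the paper's. The paper normalizes the spherical first jet \((P(N),P_x(N),P_y(N))=(e_3,e_1,e_2)\) and differentiates \(j=mws\) in graph coordinates. You instead normalize \(DP(e_3)=I\), discard the doubly differentiated rows because \(\Delta\nabla p_i=0\), apply Euler to \(\partial_jP\), and convert via \(\Delta_{S^2}J=\Delta J-k(k+1)J\). The identity \(\Delta J(e_3)=6(m-1)^2-\sum_{k=1,2}\operatorname{tr}(M_k^2)\) and the target inequality \(\sum_{k=1,2}\operatorname{tr}(M_k^2)>-3m(m-1)\) are both correct.

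The gap is that you stop exactly there. That inequality is the whole content of the theorem, and it is the only place where dimension three enters; the paper notes that for \(n\ge4\) the analogous tangential quadratic form is indefinite. You only promise to ``complete squares'', and you even allow that the bound might fail and need extra normalization. As written, the argument is a reformulation, not a proof.

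The missing observation is that your own constraints leave only two free parameters in the tangential block. For \(i,k\in\{1,2\}\) you have \(T_{i33}=(M_3)_{i3}=0\) and \(T_{33k}=(M_3)_{3k}=0\). So harmonicity of \(p_1,p_2\) gives \(T_{i11}+T_{i22}=0\), and criticality gives \(T_{11k}+T_{22k}=0\). Put \(\alpha=T_{111}\) and \(\beta=T_{112}\). Then \(T_{122}=-\alpha\), \(T_{211}=\beta\), \(T_{222}=-\beta\), \(T_{212}=-\alpha\), so the \(\{1,2\}\)-blocks of \(M_1,M_2\) are the symmetric traceless matrices
\[
\begin{pmatrix}\alpha&\beta\\ \beta&-\alpha\end{pmatrix},
\qquad
\begin{pmatrix}\beta&-\alpha\\ -\alpha&-\beta\end{pmatrix},
\]
which contribute \(4(\alpha^2+\beta^2)\ge0\). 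The terms with exactly one index equal to \(3\) give \(2(m-1)(T_{311}+T_{322})=-2(m-1)^2\), by harmonicity of \(p_3\) and \(T_{333}=m-1\). The \((3,3)\) entries vanish since \(T_{33k}=0\). Hence \(\sum_{k=1,2}\operatorname{tr}(M_k^2)=4(\alpha^2+\beta^2)-2(m-1)^2\), and
\[
\Delta_{S^2}J(e_3)=-(m-1)(m+2)-4(\alpha^2+\beta^2)<0 .
\]
This is exactly the paper's \(\Delta j(0)=-m\{4(a^2+b^2)+(m-1)(m+2)\}\) divided by \(J(N)=m\). Indeed, your normalized map is \(\operatorname{diag}(1,1,1/m)\) composed with the paper's, so \(\alpha=a\) and \(\beta=b\). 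No further normalization is needed. With this step supplied, your route is complete and somewhat more economical than the paper's, since it needs neither the factor \(w\) nor the third-derivative identity for spherical harmonics.

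Also make the first reduction precise. If \(DP(\theta)=0\), then \(P(\theta)=m^{-1}DP(\theta)\theta=0\), so \(P^{-1}(0)\) contains a ray, contradicting discreteness. This uses the distortion inequality pointwise (by continuity) and \(J_P\ge0\) everywhere after a target reflection.
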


Theorem~\ref{thm-homogeneous-qr-obstruction} is the core theorem of the
paper. For a homogeneous map \(P\) of degree \(m\), the Jacobian is homogeneous
of degree \(3m-3\). Hence, local quasiregularity away from the origin is
controlled by the behavior of \(J_P\) on the unit sphere. If a smooth
homogeneous quasiregular map had a zero of \(J_P\) on \(S^2\), the distortion
inequality would force \(DP=0\) at that point. Euler's identity would then
force \(P=0\) on the corresponding ray, contradicting discreteness. Thus, a
homogeneous quasiregular map would have \(J_P>0\) on \(S^2\), after possibly
reversing orientation. Theorem~\ref{thm-homogeneous-qr-obstruction} therefore
reduces to excluding positive homogeneous harmonic Jacobians on the sphere.

The new ingredient is a second-order trace identity for \(J_P\) on the sphere.
Assume, for contradiction, that \(J_P>0\) on \(S^2\), and take a positive
minimum. After rotating the domain and applying an orientation-preserving
linear change in the target, we may arrange
\[
        P(N)=e_3,
        \qquad
        P_x(N)=e_1,
        \qquad
        P_y(N)=e_2 .
\]
In these normalized coordinates, the trace identity gives
\[
        \Delta_{S^2}J_P(N)<0,
\]
whereas a minimum requires the trace of the Hessian to be nonnegative. The
contradiction is purely second-order. It does not depend on estimating the
quasiregular distortion constant, the constant \(K\) disappears after the
homogeneous reduction has forced the sign of \(J_P\).

\begin{remark}
A homogeneous harmonic polynomial map is finite-dimensional after restricting
to \(S^2\), so one might expect a proof by explicit spherical harmonics. The
trace identity gives a more invariant replacement for such a computation. It
uses only the first-jet normalization at a minimum, the spherical harmonic
eigenvalue equation, and the determinant expansion of the spherical Jacobian.
The sign that emerges is special, in the two-dimensional tangent plane the
relevant quadratic part becomes a sum of squares with the correct sign. This is
the mechanism that fails in higher dimensions.
\end{remark}

The normalization at the minimum deserves one further comment. If
\(J_P(N)>0\), then the three vectors \(P(N),P_x(N),P_y(N)\) are linearly
independent, with the orientation fixed by the sign of \(J_P(N)\). Applying a
linear map of positive determinant in the target sends the value and first
partial derivatives to the standard triple \(e_3,e_1,e_2\). Since the target
map has positive determinant, it preserves positivity of the Jacobian. The
calculation may therefore be made in a coordinate system, in which the zeroth
and the first order data are exactly those of the identity on the radial and
tangential directions. Every possible counterexample passes through this same
normal form.

Once the first jet is normalized, the only local freedom relevant to the trace
of \(J_P\) is contained in the second derivatives of the spherical harmonic
components. These second derivatives are not arbitrary, the spherical
Laplacian equation fixes their traces. Proposition~\ref{prop-trace-formula}
keeps track of exactly the remaining freedom. The first-derivative equations
at a minimum remove the divergence-type terms \(T_\gamma\), and the harmonic
trace constraints convert the two-dimensional quadratic expression into a
negative contribution. This is why the proof is short after the trace formula
has been established.

There is a useful geometric way to summarize the obstruction. A positive
Jacobian on \(S^2\) would make the value and the two tangential derivatives of
\(P|_{S^2}\) form a moving positively oriented frame in \(\R^3\). At a point,
where the determinant of this frame is smallest, the frame cannot expand to
second order in every tangent direction. Harmonicity forces the average second
variation of the frame determinant to be negative. This conflicts with the
minimum principle. The proof is this geometric picture written as a determinant
identity.

\subsection{The technical route}

We now describe the proof at a slightly finer level. Let \(P:\R^3\to\R^3\)
be homogeneous harmonic of degree \(m\). The restriction \(P|_{S^2}\) is a
triple of spherical harmonics of degree \(m\). By Euler's identity, the radial
column of \(DP\) at a point \(\theta\in S^2\) is \(mP(\theta)\). Consequently,
the Euclidean Jacobian \(J_P(\theta)\) can be written, up to the positive
factor \(m\), as the determinant of the three vectors
\[
        P(\theta),\quad \mathrm{d}P_\theta X,
        \quad \mathrm{d}P_\theta Y,
\]
where \(X,Y\) are an oriented orthonormal frame of \(T_\theta S^2\). Thus, the
Jacobian is a first-jet determinant on the sphere. A zero of this determinant
means that the value and first tangential derivatives fail to span the target.

The proof of the homogeneous obstruction begins by assuming that this
first-jet determinant is positive on the sphere. At a positive minimum we
normalize the value and the first tangential derivatives by an
orientation-preserving linear map in the target. This loses no information:
positivity of the determinant is preserved, and the sign of the second-order
trace at the minimum is invariant under the normalization. After this
normalization, the second derivatives of the spherical harmonics are the only
free local data relevant to the trace of \(J_P\).

The trace identity of Proposition~\ref{prop-trace-formula} is then applied at
the normalized point. In general tangent dimension \(d=n-1\), it has the form
\[
        \Delta j(0)
        =m\left(\sum_\gamma T_\gamma^2-Q(H)-(d-1)(m-1)(m+d)\right),
\]
where \(j=J_P\circ\theta\). At a minimum the first derivative equations force
\(T_\gamma=0\). In dimension three, where \(d=2\), the remaining quadratic
term becomes nonnegative after the spherical harmonic trace constraints are
used. Thus, the right side is strictly negative. This is the sign miracle in
the proof.

It is useful to note what the proof does not require. We do not classify
homogeneous harmonic maps. We do not choose an explicit basis of spherical
harmonics. We do not use degree theory beyond elementary covering arguments.
All local computations are made at a single normalized point. This locality is
what makes the argument robust enough to rule out all homogeneous degrees
\(m>1\) at once.

\begin{remark}
For cubic maps one can choose an explicit basis of the seven-dimensional space
of harmonic cubics in \(\R^3\), normalize the first jet at the north pole, and
obtain a finite-dimensional problem. Such a formulation is useful for examples
and numerical searches, but it hides the general mechanism. The proof below
uses only the eigenvalue of degree \(m\) spherical harmonics and the first-jet
normalization. The term \((m-1)(m+2)\) appearing in dimension three is strictly
positive for every \(m>1\), so the same contradiction excludes all higher
homogeneous degrees.
\end{remark}

\begin{remark}
The restriction of a homogeneous harmonic polynomial of degree \(m\) to the
sphere lies in the finite-dimensional space \(H_m(S^2)\) of spherical
harmonics. For a map \(P=(P^1,P^2,P^3)\), the condition \(J_P(\theta)\ne0\) is
equivalent to the statement that the first jets of \(P^1,P^2,P^3\) span
\(\R\oplus T_\theta^*S^2\). Thus, a hypothetical homogeneous quasiregular map
would give a three-dimensional subspace of \(H_m(S^2)\), whose first-jet
evaluation is an isomorphism at every point of the sphere. The trace identity
proves that no such subspace can arise when \(m>1\). This reformulation is
especially useful in comparing the three-dimensional theorem with the
higher-dimensional discussion in Section~\ref{sec-higher}.
\end{remark}

\subsection{A topological companion obstruction}

We also prove a purely topological homogeneous obstruction. Although this
result is not needed for the proof of Theorem~\ref{thm-main}, it clarifies the
relation between homogeneous harmonic maps, spherical direction maps, and the
Jacobian obstruction.

\begin{theorem}
\label{thm-homogeneous-noninjectivity}
Let \(m>1\). There is no one-to-one homogeneous harmonic polynomial mapping
\[
        P:\R^3\to\R^3
\]
of degree \(m\).
\end{theorem}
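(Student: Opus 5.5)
Suppose, for contradiction, that \(P:\R^3\to\R^3\) is a one-to-one homogeneous harmonic polynomial mapping of degree \(m>1\). The plan is to reduce this to Theorem~\ref{thm-homogeneous-qr-obstruction}, or, if the Jacobian degenerates, to a direct topological argument on the sphere.

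\emph{Step 1: normalize and pass to the sphere.} Since \(P\) is injective and \(P(0)=0\), we have \(P(\theta)\neq0\) for \(\theta\in S^2\). By homogeneity, \(P(t\theta)=t^mP(\theta)\) for \(t>0\). Because \(m>1\), it need not be odd: \(P(-\theta)=(-1)^mP(\theta)\). If \(m\) is even, then \(P(-\theta)=P(\theta)\), which already contradicts injectivity. So we may assume \(m\) is odd.

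Define the direction map \(\phi(\theta)=P(\theta)/|P(\theta)|\), a map \(S^2\to S^2\). Injectivity of \(P\) on rays gives injectivity of \(\phi\): if \(\phi(\theta)=\phi(\theta')\), choose \(t,t'>0\) with \(t^m|P(\theta)|=t'^m|P(\theta')|\). Then \(P(t\theta)=P(t'\theta')\), which forces \(\theta=\theta'\). Thus \(\phi\) is a continuous injection of \(S^2\) into itself, hence a homeomorphism by invariance of domain and compactness. Consequently \(P\) is a homeomorphism of \(\R^3\) onto \(\R^3\), since it is surjective on every ray.

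\emph{Step 2: sign of the Jacobian.} As in the discussion after Theorem~\ref{thm-homogeneous-qr-obstruction}, at \(\theta\in S^2\) the Jacobian \(J_P(\theta)\) is \(m\) times the determinant of \(P(\theta),\mathrm dP_\theta X,\mathrm dP_\theta Y\). Up to the positive factor \(m|P(\theta)|^3\), this equals the Jacobian of \(\phi\) at \(\theta\). A real-analytic homeomorphism of \(\R^3\) has Jacobian of constant sign wherever it is nonzero, and \(J_P\not\equiv0\). After reflecting the target we may assume \(J_P\ge0\).

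\emph{Step 3 (main obstacle): exclude zeros of \(J_P\) on \(S^2\).} If \(J_P>0\) on \(S^2\), Theorem~\ref{thm-homogeneous-qr-obstruction} gives the contradiction immediately. The hard case is a homeomorphism \(\phi\) with critical points, in the spirit of \(x\mapsto|x|^2x\).

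To handle it, I would go back to the trace identity of Proposition~\ref{prop-trace-formula}, which does not require positivity, and use it at a minimum of \(J_P|_{S^2}\), where the minimum value is \(0\). The idea is to work directly with the degenerate first jet. Replace the normalization \(P_x(N)=e_1\), \(P_y(N)=e_2\) by a rank-deficient normal form, then run the same computation of \(\Delta_{S^2}J_P(N)\), to be compared against \(\Delta_{S^2}J_P(N)\ge0\) at a minimum.

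If the degeneracy is too severe for this, I would instead argue by perturbation. Consider \(P_\varepsilon=P+\varepsilon A\) with \(A\) a generic homogeneous harmonic map of the same degree. Then \(J_{P_\varepsilon}\) is positive on most of \(S^2\), and the trace identity applied at local minima near zeros of \(J_P\) should still yield a sign contradiction in the limit.

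I expect this step, excluding degenerate homogeneous harmonic homeomorphisms, to be the genuine difficulty. Wood-type examples show that harmonicity alone permits vanishing Jacobians in general. So the argument must use homogeneity essentially, through the eigenvalue term \((m-1)(m+2)>0\) in the trace formula.
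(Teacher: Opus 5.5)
Your Steps 1 and 2 are sound and agree with the paper: the even case, injectivity of the direction map $\phi$, and the reduction to showing $J_P\ne0$ on $S^2$, after which the strict-positivity obstruction finishes. But Step 3 carries the entire content of the theorem, and neither route you propose can work.

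Both routes use only harmonicity, homogeneity and $J_P\ge0$; injectivity no longer enters. The icosahedral cubic $Q$ of Theorem~\ref{thm-icosahedral-borderline} has all three properties, and $J_Q|_{S^2}$ attains its minimum value $0$ at twelve points, where necessarily $\Delta_{S^2}J_Q\ge0$. So no second-order computation at a zero minimum, from these data alone, can yield a sign contradiction.

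There are also concrete technical obstructions:
\begin{itemize}
\item Proposition~\ref{prop-trace-formula} rests on normalizing the frame $(F_1,F_2,F)$ to the identity, which is impossible where $J_P=0$.
\item The perturbation idea fails for a structural reason. The trace identity shows that $J_P|_{S^2}$ \emph{never} has a positive local minimum, for any homogeneous harmonic $P$ of degree $m>1$. Hence the minima of $J_{P+\varepsilon A}$ near zeros of $J_P$ are automatically nonpositive, and nothing survives in the limit.
\end{itemize}
Your heuristic that the decisive ingredient is the eigenvalue term $(m-1)(m+2)$ is therefore misdirected. That term only acts once strict positivity is known.

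The missing idea is to use injectivity again, through the topology of nodal lines. This is the paper's route.
\begin{itemize}
\item If $J_P(\theta_0)=0$, then $P(\theta_0)$, $\mathrm{d}P_{\theta_0}X$ and $\mathrm{d}P_{\theta_0}Y$ are linearly dependent. So some $a\ne0$ is orthogonal to all three.
\item The function $u=a\cdot P|_{S^2}$ is a spherical harmonic with $u(\theta_0)=0$ and $\nabla_{S^2}u(\theta_0)=0$.
\item $u\not\equiv0$, since otherwise $\phi(S^2)\subset a^\perp$, contradicting surjectivity of $\phi$.
\item In conformal coordinates at $\theta_0$ one has $u=\operatorname{Re}(\alpha z^q)+O(|z|^{q+1})$ with $q\ge2$. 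By the local nodal structure theorem for eigenfunctions on surfaces (Lemma~\ref{lem:critical-zero-nodal-crossing}), $Z(u)$ consists near $\theta_0$ of $2q\ge4$ arcs meeting there.
\item On the other hand, $Z(u)=\phi^{-1}(S^2\cap a^\perp)$ is a topological circle, because $\phi$ is a homeomorphism. This is a contradiction.
\end{itemize}
Hence $J_P\ne0$ on $S^2$. By connectedness and a target reflection, $J_P>0$ on $S^2$, and Proposition~\ref{prop-no-positive-homogeneous-jacobian-R3} applies. This nodal-set step is exactly what distinguishes an injective $P$ from the borderline cubic $Q$.
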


Equivalently, there is no injective homogeneous harmonic polynomial mapping
\(P:\R^3\to\R^3\) of degree greater than one.

For even \(m\), this is immediate from \(P(-x)=P(x)\). For odd \(m\), an
injective homogeneous map would induce a homeomorphism
\[
        F:S^2\to S^2,
        \qquad
        F(\theta)=\frac{P(\theta)}{|P(\theta)|}.
\]
If \(J_P\) vanished somewhere, then some nontrivial spherical harmonic
\(a\cdot P\) would have a critical zero. The local nodal structure at a
critical zero of a spherical harmonic on \(S^2\) is incompatible with the fact
that
\[
        \{a\cdot P=0\}=F^{-1}(S^2\cap a^\perp)
\]
would be a topological circle. Hence, \(J_P\ne0\) on \(S^2\), and the analytic
obstruction applies.

\begin{remark}
Theorems~\ref{thm-homogeneous-qr-obstruction} and
\ref{thm-homogeneous-noninjectivity} have different logical roles. The first is
an analytic obstruction to bounded distortion and is the ingredient used in the
blow-up proof of Theorem~\ref{thm-main}. The second says that even without a
distortion inequality, injectivity of a homogeneous harmonic polynomial map in
\(\R^3\) would force a nonvanishing Jacobian on the sphere, and hence would
again contradict the same trace obstruction. Thus, the spherical trace identity
controls both quasiregular tangent maps and homogeneous one-to-one maps, but
through different intermediate mechanisms.
\end{remark}

\subsection{Sharpness of the strict positivity obstruction}

The homogeneous obstruction in Theorem~\ref{thm-homogeneous-qr-obstruction}
is a strict positivity statement. It cannot be strengthened by replacing
``strictly positive'' with ``nonnegative and nontrivial.'' The following
explicit cubic shows that the boundary between quasiregularity and failure of
bounded distortion is sharp already in dimension three.

\begin{theorem}
\label{thm-icosahedral-borderline}
There exists a homogeneous harmonic cubic mapping
\(Q:\R^3\to\R^3\) such that
\[
        J_Q(x)\ge0,\qquad x\in\R^3,
\]
while \(J_Q\not\equiv0\). Moreover, the zero set of \(J_Q\) on \(S^2\) is
exactly the set of twelve vertices of a regular icosahedron. In particular,
\(Q\) is not quasiregular.
\end{theorem}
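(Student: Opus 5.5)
The plan is to build \(Q\) from the representation theory of the icosahedral rotation group \(I\cong A_5\), so that the zeros of \(J_Q\) on \(S^2\) are forced by symmetry rather than computed. Under \(I\), the seven-dimensional space \(H_3\) of harmonic cubics on \(\R^3\) splits as \(T_2\oplus G\). Here \(G\) is the four-dimensional irreducible, and \(T_2\) is the three-dimensional irreducible that is Galois-conjugate to the standard representation \(T_1\) on \(\R^3\). Let \(q_1,q_2,q_3\) be a basis of the \(T_2\)-summand, chosen so that
\[
        Q=(q_1,q_2,q_3),\qquad Q(gx)=\sigma(g)Q(x),\qquad g\in I,
\]
where \(\sigma:I\to SO(3)\) realizes \(T_2\) by rotations. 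Concretely, in coordinates adapted to the icosahedron \((0,\pm1,\pm\phi)\) and its cyclic permutations, I would write \(Q\) as the projection of \((x^3,y^3,z^3)\)-type cubics onto harmonic equivariants. This is a finite linear-algebra step. Then \(J_Q(gx)=\det\sigma(g)\det g\,J_Q(x)=J_Q(x)\), so \(J_Q\) is an \(I\)-invariant homogeneous sextic. Since the invariant ring of \(I\) has generators in degrees \(2,6,10\), we get
\[
        J_Q=\alpha|x|^6+\beta I_6(x),
\]
where \(I_6\) is the basic invariant sextic, namely the product of the six linear forms dual to the five-fold axes.

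Next, I show that \(J_Q\) vanishes at the twelve vertices. Let \(v\) be a vertex, whose stabilizer is \(C_5\) generated by a rotation \(g\) of angle \(2\pi/5\) about \(v\). In the conjugate representation, \(\sigma(g)\) is a rotation of angle \(4\pi/5\) about an axis. Equivariance gives \(\sigma(g)Q(v)=Q(v)\), so \(Q(v)\) lies on that axis. On \(T_vS^2\cong\C\), the tangential differential \(L=dQ_v|_{T_vS^2}\) satisfies \(L\circ R_{2\pi/5}=R'\circ L\), where \(R'\) is rotation by \(4\pi/5\) on the plane orthogonal to the \(\sigma(g)\)-axis; note that \(L\) maps into \(\R^3\). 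First, the axis component of \(L\) is a linear functional invariant under \(R_{2\pi/5}\), so it vanishes. For the planar part, write \(L(z)=az+b\bar z\) with \(\omega=e^{2\pi i/5}\). The equivariance condition gives \(a(\omega-\omega^2)=0\) and \(b(\bar\omega-\omega^2)=0\). Since \(\omega^4\neq\omega^2\), we conclude \(a=b=0\). Hence the tangential columns of \(DQ(v)\) vanish. By the Euler identity used earlier, \(J_Q(v)=0\).

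This single linear condition \(\alpha+\beta I_6(v)=0\) pins \(J_Q\) down up to a scalar, as \(J_Q=c\,(|x|^6-I_6(x)/I_6(v))\). It remains to check three things.

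First, \(c\neq0\). I would verify this by evaluating the determinant at one convenient point, say a two-fold axis \((1,0,0)\) or a three-fold axis, using the explicit formula for \(Q\). Replacing \(Q\) by \(-Q\) if needed, we may take \(c>0\).

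Second, the sextic \(F=|x|^6-I_6/I_6(v)\) satisfies \(F\ge0\) on \(S^2\), with zeros exactly at the vertices. Equivalently, the five-fold vertices are the strict global maxima of \(I_6\) on the sphere, and there are no other maximizers. I expect this to be the main obstacle. My first attempt would use the classical description of \(I_6\) restricted to \(S^2\) via the Klein invariants. On a fundamental triangle of \(I\), with vertices at a five-fold, a three-fold and a two-fold axis, the critical points of an invariant function lie at these three axis types, unless there are extra critical points in the interior. I would show by direct computation in spherical coordinates about a vertex, or by the known values \(I_6\) takes at the three axis types, that the value at the five-fold axes is the largest and that no interior critical points occur. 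A fallback is to compute the Hessian of \(F\) at a vertex and show it is positive definite, combined with a crude numerical-free bound elsewhere.

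Third, once \(J_Q\ge0\), \(J_Q\not\equiv0\), and \(Z(J_Q)\cap S^2\) is the twelve vertices, non-quasiregularity follows. At a vertex \(v\) we have \(J_Q(v)=0\) but \(DQ(v)\neq0\), because its radial column is \(3Q(v)\neq0\). Indeed, \(Q(v)=0\) would make \(v\) a critical zero contradicting the previous count. So the pointwise distortion inequality \eqref{distortion} fails near the ray through \(v\), and by homogeneity and continuity on a set of positive measure. Alternatively, non-quasiregularity follows from Theorem~\ref{thm-homogeneous-qr-obstruction}, since \(J_Q\) is not strictly positive on \(S^2\).
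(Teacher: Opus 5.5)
Your symmetry analysis is correct, and it is a more conceptual route than the paper's to the \emph{form} of \(J_Q\). The decomposition \(H_3=T_2\oplus G\) checks out on characters. The intertwining argument at a vertex correctly shows that the tangential columns of \(DQ(v)\) vanish, so \(J_Q(v)=0\) and \(DQ(v)\) has rank at most one. Together with the two-dimensionality of invariant sextics, this forces \(J_Q=c\,(|x|^6-I_6/I_6(v))\). The paper instead gets the equivalent identity \(J_Q=\frac{225}{2}\bigl(\frac{26}{25}|X|^6-\sum_{k}(v_k\cdot X)^6\bigr)\) by direct computation.

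\textbf{The gap.} The actual content of the theorem is that this invariant sextic is \(\ge0\) on \(S^2\) and vanishes \emph{only} at the twelve vertices. Your proposal does not prove this; it only lists strategies, and none of them is carried through.
\begin{itemize}
\item Knowing the values of \(I_6\) at the three axis types does not locate its maximum. The Morse count \(12-30+20=2\) is compatible with extra \(I_h\)-orbits of critical points (of size \(60\) or \(120\)) whose indices cancel.
\item Your description of where critical points can lie is incomplete. Since \(F\) is even it is \(I_h\)-invariant, and the \((2,3,5)\) triangle is a fundamental domain for \(I_h\), not \(I\). Its edges lie on mirror circles, where critical points are \emph{not} forced to the corners. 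So the interior is not the only place to look, and for the interior you give no method.
\item The Hessian fallback only gives positivity near the vertices. The ``crude bound elsewhere'' is exactly the missing global estimate.
\item \(c\neq0\) is deferred to an evaluation of a \(Q\) you never write down, and symmetry alone does not exclude \(J_Q\equiv0\).
\end{itemize}

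\textbf{How the paper closes it.} Put a five-fold axis on the \(z\)-axis and write \(\theta=(r\cos\phi,r\sin\phi,t)\). A \(C_5\)-invariant homogeneous sextic can only contain the Fourier modes \(k\in\{0,\pm5\}\). Hence its \(\phi\)-dependence is a single term \(r^5t\cos(5\phi-\phi_0)\). Bounding \(|\cos|\le1\) reduces everything to a one-variable inequality in \(u=t^2\in[0,1]\). In the paper's normalization this is \(A(u)+6(1-u)^{5/2}\sqrt u\le13\), which follows from the factorizations \(13-A(u)=3(1-u)(11u^2-4u+1)\) and \((13-A(u))^2-36u(1-u)^5=9(1-u)^2(5u-1)^2(5u^2-2u+1)\). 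The equality cases, \(u=1\) or \(u=1/5\) with \(\cos5\phi=\operatorname{sgn}t\), are exactly the vertices. This grafts directly onto your representation-theoretic framework.

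\textbf{A smaller point.} Your justification of \(Q(v)\neq0\) (``contradicting the previous count'') is unexplained. Either compute it (the paper finds \(DQ(v_0)v_0=(0,0,6)\)), or note that non-quasiregularity is immediate from Theorem~\ref{thm-homogeneous-qr-obstruction} once \(Q\) is nonconstant.
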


Thus, Proposition~\ref{prop-no-positive-homogeneous-jacobian-R3} below is optimal in
a precise sense: every homogeneous harmonic Jacobian of degree \(m>1\) must
fail to be strictly one-signed on \(S^2\), but a harmonic cubic may sit exactly
on the boundary with \(J\ge0\) and isolated zeros. The example also explains
why the proof of Theorem~\ref{thm-homogeneous-qr-obstruction} must use
quasiregular discreteness after the strict-minimum obstruction. The maximum
principle alone excludes positive minima, not nonnegative borderline zeros.

\subsection{Global consequence and dimensional sharpness}

The local theorem has a global consequence. Every entire nonconstant
sense-preserving harmonic quasiregular map \(\R^3\to\R^3\) is affine. Indeed,
Theorem~\ref{thm-main} gives \(J_f>0\), and hence removes the branch set. Zorich's theorem promotes the
map to a quasiconformal automorphism of \(\R^3\), and the standard growth
estimate for quasiconformal automorphisms implies polynomial growth. The
highest homogeneous harmonic term at infinity is then excluded by
Theorem~\ref{thm-homogeneous-qr-obstruction} unless it has degree one. This
Liouville theorem should be compared with the elementary existence of nonlinear
entire harmonic diffeomorphisms of \(\R^3\) that are not quasiregular. Bounded
distortion is again the rigidity assumption.

We also record the part of the argument that is genuinely \(n\)-dimensional.
The blow-up reduction, the spherical Jacobian formula, and the first-jet
formulation extend to arbitrary \(n\). What does not extend naively is the
sign of the second-order trace identity: for \(n\ge4\) the quadratic term
becomes indefinite. This failure is not merely formal. In even dimensions
\(n\ge4\) there are explicit homogeneous harmonic cubic models with
\(J\ge0\) on the sphere and with zeros on two latitude spheres. We strengthen
this observation by classifying all \(O(n-1)\)-equivariant harmonic cubic
models: none has strictly positive Jacobian on the sphere, while the even-dimensional
ones give exactly the natural one-signed borderline examples in that class.

The higher-dimensional borderline examples also explain why we do not state an
\(n\)-dimensional Lewy theorem as a consequence of the present method. In
higher dimensions a critical zero of a scalar spherical harmonic may have a
more complicated nodal hypersurface without contradicting the topology in the
same elementary way as on \(S^2\), and the second-order determinant trace has
indefinite algebraic part. Thus, both ingredients used in dimension three--the
maximum-principle sign and the nodal-curve obstruction--are tied to the low
dimensional geometry of the two-sphere.

For this reason, Section~\ref{sec-higher} has a precise but limited purpose.
It isolates the statements that are truly dimension-free and then gives a
concrete family showing that one cannot simply replace \(S^2\) by
\(S^{n-1}\) in the sign argument. The main theorem of the paper remains the
three-dimensional Jacobian nonvanishing theorem. The higher-dimensional material is
included to make the boundary of the method clear and to prevent the reader
from interpreting the proof as a disguised dimension-free argument.

\begin{remark}
The proof can be read as the following chain of implications:
\[
\begin{gathered}
        J_f(a)=0
        \Longrightarrow
        Df(a)=0
        \Longrightarrow
        P_m\text{ homogeneous harmonic and quasiregular},\\
        P_m\text{ quasiregular}
        \Longrightarrow
        J_{P_m}>0\text{ on }S^2
        \Longrightarrow
        \Delta_{S^2}J_{P_m}<0\text{ at a minimum}.
\end{gathered}
\]
which is impossible. The first implication is bounded distortion, the second is
analyticity of harmonic functions, the third is Reshetnyak compactness and
discreteness, and the last implication is the new second-order trace identity.
This separation of roles is useful: all steps except the final sign computation
are stable in arbitrary dimension, while the final sign computation is
intrinsically three-dimensional.
\end{remark}

The paper is organized as follows. Section~\ref{sec-preliminaries} collects
quasiregular compactness, harmonic blow-ups, and the spherical reduction for
homogeneous maps. Section~\ref{sec-trace} proves the second-order trace
identity, with the full determinant calculation included in the proof so that
the normalization and sign conventions are completely transparent.
Section~\ref{sec-obstruction} proves the homogeneous obstructions in
\(\R^3\). Section~\ref{sec-lewy} proves Theorem~\ref{thm-main}, its analytic
Riemannian variants, and the global Liouville corollaries.
Section~\ref{sec-higher} proves the icosahedral
borderline theorem, gives the higher-dimensional first-jet formulation, and
classifies the equivariant harmonic cubic models.

\section{Blow-ups and spherical reductions}
\label{sec-preliminaries}

\subsection{Quasiregular mappings}

\begin{definition}
Let \(\Omega\subset\R^n\) be a domain. A nonconstant mapping
\(f:\Omega\to\R^n\) is called sense-preserving \(K\)-quasiregular if
\(f\in W^{1,n}_{\operatorname{loc}}(\Omega,\R^n)\), \(J_f\ge0\) a.e., and
\[
        \norm{Df(x)}^n\le KJ_f(x)
        \qquad\text{for a.e. }x\in\Omega .
\]
A quasiconformal mapping is a homeomorphic quasiregular mapping.
\end{definition}

We shall use two standard facts from Reshetnyak theory, see
\cite{Reshetnyak,Rickman,Vaisala}.

\begin{lemma}
\label{lem-reshetnyak}
The following hold.
\begin{enumerate}[label=(\roman*)]
\item A nonconstant quasiregular mapping is open and discrete.
\item If \(f_j:\Omega\to\R^n\) are \(K\)-quasiregular and converge locally
uniformly to a nonconstant mapping \(f\), then \(f\) is \(K\)-quasiregular.
\end{enumerate}
\end{lemma}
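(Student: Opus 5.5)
The statement in Lemma~\ref{lem-reshetnyak} collects two classical results of Reshetnyak theory, and my plan is to derive each from the standard machinery rather than reprove it from scratch. For part (i), I would start from Reshetnyak's theorem that a nonconstant sense-preserving quasiregular map \(f\) is the composition of a continuous map with a sense-preserving local degree structure. Concretely, the steps are as follows.

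\begin{enumerate}[label=(\alph*)]
\item Show that the coordinate functions of \(f\), and more generally \(u\circ f\) for suitable test functions \(u\), are solutions (or subsolutions) of a degenerate elliptic \(n\)-Laplace type equation \(\operatorname{div}\mathcal A(x,\nabla v)=0\), with ellipticity controlled by \(K\). This uses the distortion inequality \eqref{distortion} together with the pullback of the form \(|y|^{-n}\)-type potentials.
\item Deduce from the maximum principle and Harnack-type estimates for these equations that every fiber \(f^{-1}(y)\) is totally disconnected. Reshetnyak's capacity argument, which compares the \(n\)-capacity of condensers before and after mapping, then upgrades this to discreteness.
\item Use that the local topological index of \(f\) is positive at each point, because \(J_f\ge0\) a.e. and \(f\) is nonconstant. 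A discrete, sense-preserving (positive local index) continuous map is open, by the Titus--Young/Chernavskii theory of branched coverings.
\end{enumerate}

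Step (b), the discreteness, is the genuinely deep part. Here I would cite \cite{Reshetnyak,Rickman} rather than reproduce the capacity estimates.

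For part (ii), I would argue by weak compactness. By Caccioppoli-type estimates for quasiregular maps, local uniform boundedness of \(f_j\) gives uniform bounds on \(\int_{D}|Df_j|^n\) on compact sets \(D\). Passing to a subsequence, \(Df_j\rightharpoonup Df\) weakly in \(L^n_{\mathrm{loc}}\), so \(f\in W^{1,n}_{\operatorname{loc}}\). The Jacobians \(J_{f_j}\) converge to \(J_f\) in the sense of distributions, because they are null Lagrangians and can be written in divergence form. Combined with \(J_{f_j}\ge0\), this gives weak-\(*\) convergence of measures and \(J_f\ge0\) a.e.

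The remaining step, which I expect to be the main obstacle, is passing the distortion inequality \(\norm{Df}^n\le KJ_f\) to the limit. Weak limits only give lower semicontinuity of \(\int\varphi\norm{Df}^n\), and convexity is not directly available for the operator-norm functional paired with \(J\). I would handle this as follows:
\begin{enumerate}[label=(\alph*)]
\item Use the polyconvexity of the function \((A,\delta)\mapsto \norm{A}^n/\delta\) on \(\delta>0\).
\item Apply a lower semicontinuity theorem for polyconvex integrands to obtain \(\int_E\norm{Df}^n\le K\int_E J_f\) for every measurable set \(E\).
\item Localize via Lebesgue points to recover the pointwise inequality.
\end{enumerate}
Nonconstancy of the limit is an assumption, so this completes the argument. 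This is exactly Reshetnyak's closure theorem, and in the paper it suffices to quote it.
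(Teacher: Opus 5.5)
The paper gives no proof of this lemma; it quotes both parts from \cite{Reshetnyak,Rickman,Vaisala}. Your proposal also ends by citing Reshetnyak's theorems, so your bottom line matches the paper. Your supporting sketch, however, misdescribes the mechanism in two places.

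In part (ii), the ``main obstacle'' is not an obstacle. The map \(A\mapsto\norm{A}^n\) is convex, being a norm composed with the convex nondecreasing map \(t\mapsto t^n\). So for every \(\varphi\in C_c^\infty(\Omega)\) with \(\varphi\ge0\), weak lower semicontinuity and the distributional convergence \(J_{f_j}\to J_f\), which you already established, give
\[
\int\varphi\norm{Df}^n\le\liminf_{j\to\infty}\int\varphi\norm{Df_j}^n\le K\lim_{j\to\infty}\int\varphi J_{f_j}=K\int\varphi J_f .
\]
Hence \(\norm{Df}^n\le KJ_f\) a.e., and in particular \(J_f\ge0\) a.e. This is Reshetnyak's closure argument. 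Your detour through the quotient \((A,\delta)\mapsto\norm{A}^n/\delta\) does not produce what you wrote. Lower semicontinuity of that integrand bounds \(\int_E\norm{Df}^n/J_f\) by \(K|E|\); it does not bound \(\int_E\norm{Df}^n\) by \(K\int_E J_f\). That theorem also needs \(J_{f_j}\rightharpoonup J_f\) weakly in \(L^1_{\mathrm{loc}}\), which is an equi-integrability input (e.g.\ higher integrability) you have not supplied. Distributional convergence alone does not let you test on arbitrary measurable sets \(E\).

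In part (i), discreteness is not obtained by comparing capacities of condensers before and after mapping. In the standard development, those inequalities are proved for maps already known to be open and discrete: the image of a condenser is a condenser only because \(f\) is open. Invoking them here is therefore circular. The \(\mathcal A\)-harmonic step, applied to \(\log|f-y_0|\), yields only lightness: each fiber is polar, hence of zero \(n\)-capacity and totally disconnected.

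Sense-preservation means positive degree on every relatively compact domain \(D\) at points of \(f(D)\setminus f(\partial D)\). It comes from the degree formula together with lightness and the distortion inequality, not merely from \(J_f\ge0\) a.e.\ and nonconstancy. Zero degree on a component would force \(J_f=0\), hence \(Df=0\), a.e.\ on a nonempty open set. Then \(f\) would be locally constant there, contradicting lightness. Note also that the local index is not defined until points are known to be isolated in their fibers. The topological theorem that a light sense-preserving map is open and discrete then finishes the argument.
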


For smooth mappings, the distortion inequality can be evaluated at every
point.

\begin{lemma}
\label{lem-pointwise-distortion}
Let \(f:\Omega\to\R^n\) be \(C^1\), and suppose that
\[
        \norm{Df}^n\le KJ_f
\]
holds almost everywhere in \(\Omega\). Then the same inequality holds
everywhere. In particular, if \(J_f(a)=0\), then \(Df(a)=0\).
\end{lemma}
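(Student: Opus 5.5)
The plan is to show that the a.e. inequality \(\norm{Df}^n\le KJ_f\) propagates to every point by continuity, and then to read off the consequence at a zero of the Jacobian. Since \(f\in C^1(\Omega)\), both sides are continuous functions of \(x\). On the left, the operator norm \(A\mapsto\norm{A}\) is continuous (indeed Lipschitz) on matrices and \(x\mapsto Df(x)\) is continuous, so \(x\mapsto\norm{Df(x)}^n\) is continuous. On the right, \(J_f=\det Df\) is a polynomial in the entries of \(Df\), hence also continuous.

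Set
\[
g(x)=KJ_f(x)-\norm{Df(x)}^n .
\]
This is a continuous function on \(\Omega\), and \(g\ge0\) outside a null set \(E\). Now fix any \(a\in\Omega\). Every ball \(B(a,r)\subset\Omega\) has positive Lebesgue measure, so it contains points of \(\Omega\setminus E\). We can therefore pick a sequence \(x_k\to a\) with \(x_k\notin E\), and continuity gives \(g(a)=\lim g(x_k)\ge0\). So the inequality holds at every point of \(\Omega\).

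For the second claim, suppose \(J_f(a)=0\). The pointwise inequality then gives \(\norm{Df(a)}^n\le0\), so \(\norm{Df(a)}=0\). Since the operator norm vanishes only on the zero matrix, \(Df(a)=0\).

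No step here is a genuine obstacle. The only point needing care is that the exceptional null set has empty interior, so that its complement is dense and the continuity argument applies. This is immediate because nonempty open sets have positive measure.
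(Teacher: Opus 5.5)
Your proof is correct and follows the paper's argument: both rest on the continuity of \(KJ_f-\norm{Df}^n\) and on the fact that nonempty open sets have positive measure. The paper argues by contradiction (negativity at a point would persist on a small ball), whereas you approximate by points outside the null set; the conclusion \(Df(a)=0\) at a zero of \(J_f\) is then read off in the same way.
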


\begin{proof}
The function \(KJ_f-\norm{Df}^n\) is continuous. If it were negative at some
point, it would be negative on a small ball, contradicting the
almost-everywhere inequality. Hence, it is nonnegative everywhere. At a point
where \(J_f=0\), the inequality gives \(\norm{Df}=0\).
\end{proof}

\subsection{Homogeneous maps and spherical coordinates}

Let \(P:\R^n\to\R^n\) be homogeneous of degree \(m\). Euler's identity gives
\begin{equation}\label{Euler}
        DP(x)x=mP(x).
\end{equation}
Consequently,
\begin{equation}\label{homogeneous-jacobian}
        J_P(r\theta)=r^{n(m-1)}J_P(\theta),
        \qquad r>0,\quad \theta\in S^{n-1}.
\end{equation}
Thus, by \eqref{homogeneous-jacobian}, the distortion of \(P\) away from the origin is governed by its
restriction to the sphere.

\begin{lemma}
\label{lem-spherical-jacobian}
Let \(P:\R^n\to\R^n\) be homogeneous of degree \(m\), and let
\(X_1,\ldots,X_{n-1}\) be an oriented orthonormal basis of \(T_\theta S^{n-1}\),
so that \((X_1,\ldots,X_{n-1},\theta)\) is positively oriented in \(\R^n\).
Then
\begin{equation}\label{spherical-jacobian}
        J_P(\theta)
        =
        m\det\bigl(\mathrm{d}P_\theta X_1,\ldots,\mathrm{d}P_\theta X_{n-1},P(\theta)\bigr).
\end{equation}
In particular, \(J_P(\theta)\ne0\) if and only if the vectors
\[
        P(\theta),\ \mathrm{d}P_\theta X_1,\ldots,\mathrm{d}P_\theta X_{n-1}
\]
span \(\R^n\).
\end{lemma}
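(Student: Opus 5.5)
The plan is to compute \(DP(\theta)\) in the positively oriented orthonormal basis \((X_1,\ldots,X_{n-1},\theta)\) of \(\R^n\). Let \(R\) be the orthogonal matrix with columns \(X_1,\ldots,X_{n-1},\theta\). Positive orientation means \(\det R=1\). Therefore
\[
        J_P(\theta)=\det DP(\theta)=\det\bigl(DP(\theta)R\bigr)
        =\det\bigl(DP(\theta)X_1,\ldots,DP(\theta)X_{n-1},DP(\theta)\theta\bigr).
\]
For \(j\le n-1\), the vector \(DP(\theta)X_j\) is the derivative of \(P\) along the tangent direction \(X_j\). This is exactly \(\mathrm{d}P_\theta X_j\), the differential of the restriction \(P|_{S^{n-1}}\) applied to \(X_j\).

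For the last column we use Euler's identity \eqref{Euler} with \(x=\theta\), which gives \(DP(\theta)\theta=mP(\theta)\). Since the determinant is multilinear in its columns, the factor \(m\) comes out of the last column. This yields \eqref{spherical-jacobian}.

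The spanning statement follows directly. Since \(m>0\), \(J_P(\theta)\ne0\) holds if and only if the determinant of the \(n\) vectors \(\mathrm{d}P_\theta X_1,\ldots,\mathrm{d}P_\theta X_{n-1},P(\theta)\) is nonzero. That is equivalent to these \(n\) vectors being linearly independent, i.e. spanning \(\R^n\). Reordering the list does not change whether the vectors span.

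I expect no real obstacle. The one point needing care is sign bookkeeping. The orientation convention \(\det(X_1,\ldots,X_{n-1},\theta)=+1\) is what ensures no sign appears. It also fixes the placement of \(P(\theta)\) as the last column in \eqref{spherical-jacobian}, which matters for the later normalization at a minimum.
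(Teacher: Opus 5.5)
Your proposal is correct and follows the paper's proof: you express \(DP(\theta)\) in the positively oriented frame \((X_1,\ldots,X_{n-1},\theta)\), apply Euler's identity to the radial column, and take determinants. Your version also spells out why no sign or scaling factor appears, namely that \(\det R=1\) for the orthogonal matrix \(R\) with these columns, a point the paper leaves implicit.
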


\begin{proof}
The columns of \(DP(\theta)\) in the oriented basis
\((X_1,\ldots,X_{n-1},\theta)\) are
\[
        \mathrm{d}P_\theta X_1,\ldots,\mathrm{d}P_\theta X_{n-1},DP(\theta)\theta.
\]
By \eqref{Euler}, the last column is \(mP(\theta)\). Taking determinants
gives \eqref{spherical-jacobian}.
\end{proof}

We shall use graph coordinates near the north pole. Put \(d=n-1\), write
\(u=(u_1,\ldots,u_d)\in\R^d\), and set
\begin{equation}\label{graph-coordinates}
        \theta(u)=(u,w(u)),
        \qquad
        w(u)=\sqrt{1-|u|^2}.
\end{equation}
If \(\phi\) is a smooth function on the sphere and we write again
\(\phi(u)=\phi(\theta(u))\), then
\begin{equation}\label{sphere-laplacian}
        \Delta_{S^d}\phi
        =
        \sum_{\alpha,\beta=1}^{d}(\delta_{\alpha\beta}-u_\alpha u_\beta)
        \partial_{\alpha\beta}\phi
        -
        d\sum_{\alpha=1}^{d}u_\alpha\partial_\alpha\phi .
\end{equation}
If \(\phi\) is the restriction to \(S^d\) of a homogeneous harmonic
polynomial of degree \(m\), then
\begin{equation}\label{spherical-eigenvalue}
        \Delta_{S^d}\phi=-\lambda\phi,
        \qquad
        \lambda=m(m+d-1).
\end{equation}

\subsection{The blow-up of a harmonic quasiregular mapping}

\begin{lemma}
\label{lem-analytic-blow-up}
Let \(f:\Omega\subset\R^n\to\R^n\) be harmonic and nonconstant, and let
\(a\in\Omega\). If \(Df(a)=0\), then
\[
        f(a+h)-f(a)=P_m(h)+O(|h|^{m+1})
\]
for some integer \(m\ge2\), where \(P_m:\R^n\to\R^n\) is a nonzero
homogeneous harmonic polynomial mapping of degree \(m\). Moreover,
\[
        f_r(x):=\frac{f(a+rx)-f(a)}{r^m}
\]
converges to \(P_m\) in \(C^\infty_{\operatorname{loc}}\) as \(r\downarrow0\).
\end{lemma}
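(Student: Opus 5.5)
The plan is to combine real-analyticity of harmonic functions with the mean value property, which makes each homogeneous Taylor term harmonic. Each coordinate of \(f\) is harmonic and therefore real analytic near \(a\). On a small ball \(B(a,2\rho)\subset\Omega\) we get a convergent expansion
\[
        f(a+h)-f(a)=\sum_{k\ge1}P_k(h),
\]
where \(P_k\) is the homogeneous polynomial mapping of degree \(k\) given by the \(k\)-th order Taylor terms. The expansion converges absolutely and uniformly for \(|h|\le\rho\), together with all derivatives. Since \(\Delta\) maps homogeneous polynomials of degree \(k\) to homogeneous polynomials of degree \(k-2\), applying \(\Delta\) termwise to \(0=\Delta f\) and comparing homogeneous parts gives \(\Delta P_k=0\) for every \(k\).

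Next I identify \(m\). The hypothesis \(Df(a)=0\) says \(P_1=0\). Suppose all \(P_k\) vanished. Then \(f\) would be constant near \(a\), hence constant on the connected domain \(\Omega\) by analyticity, which contradicts nonconstancy. So there is a least \(m\ge2\) with \(P_m\neq0\).

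For the remainder, I use Cauchy-type estimates for harmonic functions. These give \(\sup_{|h|=1}|P_k(h)|\le C\rho^{-k}\). One way to see this is to write \(P_k(h)=\frac{1}{k!}D^kf(a)[h,\dots,h]\) and apply the interior derivative bounds \(|D^kf(a)|\le C\,(nk/\rho)^k\sup|f|\), together with \(k^k/k!\le e^k\). This yields geometric decay on a possibly smaller ball, say \(|h|\le\rho'\), after shrinking \(\rho\). Summing the tail then gives
\[
        \Bigl|\sum_{k>m}P_k(h)\Bigr|\le C|h|^{m+1}
\]
for \(|h|\le\rho'/2\).

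For the convergence statement, homogeneity gives
\[
        f_r(x)=P_m(x)+\sum_{k>m}r^{k-m}P_k(x)
\]
for \(|x|\le\rho'/(2r)\). On any fixed compact set \(|x|\le R\), and for small \(r\), the tail is bounded by \(Cr\) in \(C^0\). A cleaner route to the \(C^\infty\) statement is the following. The \(f_r\) are harmonic, and \(f_r-P_m\) is harmonic with \(\sup_{|x|\le 2R}|f_r-P_m|\le C(R)r\). Interior estimates for harmonic functions then give \(\|f_r-P_m\|_{C^\ell(B_R)}\le C(\ell,R)r\), which proves \(C^\infty_{\mathrm{loc}}\) convergence.

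The only point requiring care is the uniform tail bound, that is, ensuring geometric decay of \(P_k\). I would handle it via the harmonic Cauchy estimates above rather than general analyticity, since they give explicit constants. Everything else is bookkeeping.
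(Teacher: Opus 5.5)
Your proof is correct and follows the same route as the paper. You use the real-analytic Taylor expansion, apply \(\Delta\) termwise to see that the leading homogeneous term is harmonic, rescale to get locally uniform convergence, and then use interior estimates for harmonic functions to upgrade this to \(C^\infty_{\mathrm{loc}}\) convergence. You also supply details the paper leaves implicit: the existence of \(m\) via the identity theorem on the connected domain, and explicit Cauchy-estimate control of the Taylor tail. One small point: the mean value property mentioned in your opening sentence is not what makes the \(P_k\) harmonic; the termwise Laplacian comparison you actually carry out does that.
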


\begin{proof}
Harmonic functions are real analytic. Let \(m\) be the least degree for which
at least one coordinate of the Taylor expansion of \(f-f(a)\) has a nonzero
homogeneous term. Since \(Df(a)=0\), one has \(m\ge2\). Applying the
Euclidean Laplacian to the Taylor series shows that the degree-\(m\) term is
harmonic componentwise. Rescaling the Taylor expansion gives locally uniform
convergence of \(f_r\) to \(P_m\), and elliptic regularity gives convergence
of all derivatives on compact subsets.
\end{proof}

\begin{proposition}
Let \(\Omega\subset\R^n\) be a domain and let \(f:\Omega\to\R^n\) be a
sense-preserving \(K\)-quasiregular harmonic mapping. If \(J_f(a)=0\) at
some point \(a\in\Omega\), then the first nonzero Taylor term of \(f-f(a)\)
at \(a\) is a nonconstant homogeneous harmonic polynomial mapping
\[
        P_m:\R^n\to\R^n,\qquad m\ge2,
\]
and \(P_m\) is \(K\)-quasiregular.
\end{proposition}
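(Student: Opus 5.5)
The plan is to chain Lemmas~\ref{lem-pointwise-distortion}, \ref{lem-analytic-blow-up} and \ref{lem-reshetnyak}.

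First, harmonic functions are real analytic, so \(f\in C^\infty(\Omega)\). The distortion inequality holds almost everywhere. By Lemma~\ref{lem-pointwise-distortion} it therefore holds at every point, and \(J_f(a)=0\) forces \(Df(a)=0\). The map \(f\) is nonconstant, since quasiregular mappings are nonconstant by definition. Lemma~\ref{lem-analytic-blow-up} then provides an integer \(m\ge2\) and a nonzero homogeneous harmonic polynomial mapping \(P_m\) with
\[
f(a+h)-f(a)=P_m(h)+O(|h|^{m+1}),
\]
and \(f_r\to P_m\) locally uniformly on \(\R^n\) as \(r\downarrow0\). A nonzero homogeneous polynomial of degree \(m\ge 2\) is not constant: a constant homogeneous polynomial of positive degree vanishes identically, since it is zero at the origin.

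Second, I will check that the rescalings keep the same distortion constant. Fix a ball \(B(0,R)\). For \(r<\operatorname{dist}(a,\partial\Omega)/R\), the map \(f_r\) is defined on \(B(0,R)\) and satisfies
\[
Df_r(x)=r^{1-m}Df(a+rx),\qquad J_{f_r}(x)=r^{n(1-m)}J_f(a+rx).
\]
Hence \(\|Df_r\|^n=r^{n(1-m)}\|Df(a+rx)\|^n\le K J_{f_r}(x)\), and \(J_{f_r}\ge0\). Subtracting the constant \(f(a)\) and composing with a similarity changes neither property. The same computation also shows that each \(f_r\) is nonconstant.

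Third, I will apply Lemma~\ref{lem-reshetnyak}(ii) on each ball \(B(0,R)\) along a sequence \(r_j\downarrow0\). The limit \(P_m\) is nonconstant, so it is \(K\)-quasiregular on every such ball, and therefore on all of \(\R^n\).

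The only point needing care is this domain issue: the \(f_r\) are defined on the growing sets \((\Omega-a)/r\) rather than on a fixed domain, so the compactness statement has to be applied ball by ball, as above. One could also bypass Reshetnyak's theorem entirely. Since \(f_r\to P_m\) in \(C^\infty_{\mathrm{loc}}\), the pointwise inequalities \(\|Df_r\|^n\le KJ_{f_r}\) pass directly to the limit, giving \(\|DP_m\|^n\le KJ_{P_m}\) and \(J_{P_m}\ge0\) everywhere. Together with \(P_m\in W^{1,n}_{\mathrm{loc}}\) (it is a polynomial) and \(P_m\) nonconstant, this is exactly \(K\)-quasiregularity. I would present this direct argument as the main proof and mention the compactness route as an alternative.
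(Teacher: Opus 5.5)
Your proposal is correct. Its skeleton is the paper's: pointwise distortion gives \(Df(a)=0\), then the Taylor blow-up of Lemma~\ref{lem-analytic-blow-up}, then a limit argument. The difference is the final step.

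The paper observes that each \(f_r\) is \(K\)-quasiregular ``on its rescaled domain'' and invokes Reshetnyak compactness, Lemma~\ref{lem-reshetnyak}(ii). It leaves implicit that the \(f_r\) live on the expanding sets \((\Omega-a)/r\). Your compactness variant makes this precise by working on fixed balls \(B(0,R)\) with \(r<\operatorname{dist}(a,\partial\Omega)/R\).

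Your preferred direct route avoids compactness entirely. Lemma~\ref{lem-analytic-blow-up} already gives \(f_r\to P_m\) in \(C^1_{\mathrm{loc}}\), so the everywhere-valid inequality \(\|Df_r\|^n\le KJ_{f_r}\) passes to the limit by continuity of the norm and the determinant. A nonconstant polynomial satisfying it everywhere is then \(K\)-quasiregular by definition. This is more elementary and makes it transparent that the exact constant \(K\) is retained.

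The paper's route buys robustness. Reshetnyak's theorem needs only locally uniform \(C^0\) convergence, so it would still work where derivative convergence is unavailable. In this analytic setting that extra generality is not needed.

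One small precision for the compactness variant: nonconstancy of \(f_r\) on a ball \(B(0,R)\) does not follow from the scaling identity alone. It follows from \(C^1\) convergence to the nonconstant \(P_m\) for small \(r\), or from real analyticity of \(f\).
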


\begin{proof}
By Lemma~\ref{lem-pointwise-distortion}, \(Df(a)=0\). Lemma
\ref{lem-analytic-blow-up} produces the first nonzero homogeneous harmonic
term \(P_m\) and the rescalings
\[
        f_r(x)=\frac{f(a+rx)-f(a)}{r^m}.
\]
Domain dilations, target translations, and positive target homotheties
preserve the quasiregular distortion inequality, so each \(f_r\) is
\(K\)-quasiregular on its rescaled domain. Since \(f_r\to P_m\) locally
uniformly and \(P_m\) is nonconstant, Lemma~\ref{lem-reshetnyak} implies
that \(P_m\) is \(K\)-quasiregular.
\end{proof}

\section{The second-order trace identity}
\label{sec-trace}

The following identity is the computational heart of the proof. It is stated
in arbitrary dimension because it also explains why the argument closes in
\(\R^3\) but does not immediately close in higher dimensions. We shall use
the graph coordinates \eqref{graph-coordinates}, the local formula
\eqref{sphere-laplacian}, and the spherical eigenvalue relation
\eqref{spherical-eigenvalue}.

\begin{proposition}
\label{prop-trace-formula}
Let \(d=n-1\), and let \(P:\R^n\to\R^n\) be homogeneous harmonic of degree
\(m\). Use the local coordinates
\[
        \theta(u)=\bigl(u,\sqrt{1-|u|^2}\bigr)
\]
near \(N=e_n\). Suppose that, after an orientation-preserving linear change
in the target,
\begin{equation}\label{normalization}
        F(0)=e_n,
        \qquad
        F_\alpha(0)=e_\alpha,\qquad \alpha=1,\ldots,d,
\end{equation}
where \(F(u)=P(\theta(u))\). Put
\[
        j(u)=J_P(\theta(u)),
        \qquad
        H^i_{\alpha\beta}=\partial_{\alpha\beta}F^i(0),
\]
\[
        T_\gamma=\sum_{p=1}^{d}H^p_{p\gamma},
        \qquad
        Q(H)=\sum_{\gamma=1}^{d}\sum_{p,q=1}^{d}
              H^p_{q\gamma}H^q_{p\gamma},
        \qquad
        \mu=(m-1)(m+d).
\]
Then
\begin{equation}\label{first-derivative-j}
        \partial_\gamma j(0)=mT_\gamma,
\end{equation}
and
\begin{equation}\label{trace-formula}
        \Delta j(0)
        =
        m\left(
          \sum_{\gamma=1}^{d}T_\gamma^2
          -
          Q(H)
          -
          (d-1)\mu
        \right).
\end{equation}
\end{proposition}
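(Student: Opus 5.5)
The plan is to write \(j\) as an explicit determinant in the graph coordinates and then differentiate it twice at \(u=0\). The only input about \(P\) is the eigenvalue equation \eqref{spherical-eigenvalue}, written in the local form \eqref{sphere-laplacian}.

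\emph{Step 1: determinant form of \(j\).} The coordinate vectors \(\partial_\alpha\theta(u)\) span \(T_{\theta(u)}S^d\) but are not orthonormal. Applying Lemma~\ref{lem-spherical-jacobian} in this non-orthonormal frame, the change-of-basis determinant \(\det(\partial_1\theta,\ldots,\partial_d\theta,\theta)\) appears as a factor. For the graph parametrization \(\theta=(u,w)\) this determinant equals \(1/w\). I would verify this directly by adding \(u_\alpha\) times row \(\alpha\) to the last row, which leaves last-row entries \(0,\ldots,0,1/w\). Since \(\mathrm{d}P_{\theta}\,\partial_\alpha\theta=F_\alpha\), this gives
\[
        j(u)=m\,w(u)\,D(u),\qquad D(u)=\det\bigl(F_1(u),\ldots,F_d(u),F(u)\bigr).
\]
The orientation-preserving normalization in the target only multiplies \(j\) by a positive constant, and under \eqref{normalization} we have \(D(0)=1\). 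Because \(w=1-\tfrac12|u|^2+O(|u|^4)\), we get \(\partial_\gamma w(0)=0\) and \(\Delta w(0)=-d\). Hence
\[
        \partial_\gamma j(0)=m\,\partial_\gamma D(0),\qquad \Delta j(0)=m\bigl(\Delta D(0)-d\bigr).
\]

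\emph{Step 2: first derivatives.} I differentiate \(D\) column by column. Differentiating column \(\alpha\) gives \(F_{\alpha\gamma}(0)\) inserted into an identity matrix, which contributes \(H^\alpha_{\alpha\gamma}\). Differentiating the last column replaces \(e_n\) by \(F_\gamma(0)=e_\gamma\), which duplicates column \(\gamma\) and contributes \(0\). Summing over \(\alpha\) gives \eqref{first-derivative-j}.

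\emph{Step 3: second derivatives, the main bookkeeping step.} By the Leibniz rule, \(\partial_{\gamma\gamma}D(0)\) splits into three kinds of terms.
\begin{enumerate}[label=(\alph*)]
\item Terms where one column is differentiated twice. These give \(\partial_{\alpha\gamma\gamma}F^\alpha(0)\) from column \(\alpha\) and \(H^n_{\gamma\gamma}\) from the last column.
\item Twice the \(2\times2\) minors \(H^\alpha_{\alpha\gamma}H^\beta_{\beta\gamma}-H^\beta_{\alpha\gamma}H^\alpha_{\beta\gamma}\), for \(\alpha<\beta\) among the tangential columns. Summed over pairs and over \(\gamma\), the diagonal squares cancel, leaving exactly \(\sum_\gamma T_\gamma^2-Q(H)\).
\item Mixed terms pairing column \(\alpha\) with the last column \(e_\gamma\). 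These vanish unless \(\alpha=\gamma\), in which case the minor in rows \(\alpha,n\) equals \(-H^n_{\gamma\gamma}\), so the total is \(-2H^n_{\gamma\gamma}\).
\end{enumerate}
Summing over \(\gamma\) therefore yields
\[
        \Delta D(0)=\sum_\gamma T_\gamma^2-Q(H)-\sum_\gamma H^n_{\gamma\gamma}+\sum_{\alpha}\partial_\alpha(\Delta F^\alpha)(0).
\]
The delicate points are keeping the signs of the mixed minors straight and checking the cancellation in (b).

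\emph{Step 4: inserting the eigenvalue equation.} I evaluate \eqref{sphere-laplacian} together with \eqref{spherical-eigenvalue} at \(u=0\) for \(F^n\). Since \(F^n(0)=1\) and \(\nabla F^n(0)=0\), this gives \(\sum_\gamma H^n_{\gamma\gamma}=-\lambda\). Next I differentiate the equation for \(F^\alpha\) in \(u_\alpha\) at \(0\). The quadratic coefficient terms \(u_\alpha u_\beta\) drop out, and the drift term contributes \(-d\,\partial_\alpha F^\alpha(0)=-d\). This gives \(\partial_\alpha\Delta F^\alpha(0)=d-\lambda\), so the last sum equals \(d(d-\lambda)\). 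Combining with Step 1,
\[
        \frac{\Delta j(0)}{m}=\sum_\gamma T_\gamma^2-Q(H)+\lambda+d^2-d\lambda-d
        =\sum_\gamma T_\gamma^2-Q(H)-(d-1)(\lambda-d).
\]
Finally, \(\lambda-d=m(m+d-1)-d=(m-1)(m+d)=\mu\), which is \eqref{trace-formula}.
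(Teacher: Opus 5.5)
Your proposal is correct and is essentially the paper's proof. It uses the same factorization \(j=m\,w\,\det(F_1,\ldots,F_d,F)\) giving \(\Delta j(0)=m(\Delta s(0)-d)\), and the same use of the eigenvalue equation at \(0\) for \(F^n\) and of its \(u_\alpha\)-derivative for \(F^\alpha\); your column-by-column Leibniz expansion is just an unpacked form of the paper's identity \(s_{\gamma\gamma}(0)=\tr C_{\gamma\gamma}(0)+(\tr C_\gamma(0))^2-\tr(C_\gamma(0)^2)\). One cosmetic slip in Step 1: the row operation that clears the last row of \(\det(\partial_1\theta,\ldots,\partial_d\theta,\theta)\) uses the multiples \(u_\alpha/w\), not \(u_\alpha\), although the value \(1/w\) you state is right.
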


\begin{proof}
We write
\[
        w(u)=\sqrt{1-|u|^2},
        \qquad
        \theta(u)=(u,w(u)).
\]
Thus, \(\theta(0)=e_n\). We also write
\[
        F_\alpha=\partial_\alpha F,
        \qquad
        \theta_\alpha=\partial_\alpha\theta.
\]
Define the \(n\times n\) matrix
\[
        C(u)=\bigl(F_1(u),\ldots,F_d(u),F(u)\bigr),
        \qquad
        s(u)=\det C(u).
\]
The point of this notation is that the Jacobian of \(P\) can be computed by
comparing the frame
\[
        \theta_1,\ldots,\theta_d,\theta
\]
in the domain with its image under \(DP\).

First, we compute the determinant of the domain frame. Since
\[
        \theta_\alpha=(e_\alpha,w_\alpha),
\]
the matrix with columns \(\theta_1,\ldots,\theta_d,\theta\) is
\[
        \begin{pmatrix}
        I_d & u\\
        \nabla w^T & w
        \end{pmatrix}.
\]
Therefore,
\[
        \det(\theta_1,\ldots,\theta_d,\theta)
        =
        w-\nabla w\cdot u.
\]
Because
\[
        \nabla w=-\frac{u}{w},
\]
we get
\[
        w-\nabla w\cdot u
        =
        w+\frac{|u|^2}{w}
        =
        \frac{w^2+|u|^2}{w}
        =
        \frac1w.
\]
Hence,
\[
        \det(\theta_1,\ldots,\theta_d,\theta)=\frac1{w(u)}.
\]
On the other hand,
\[
        DP(\theta(u))\theta_\alpha(u)=F_\alpha(u).
\]
Also, by Euler's identity for a homogeneous map of degree \(m\),
\[
        DP(\theta(u))\theta(u)=mP(\theta(u))=mF(u).
\]
Taking determinants, and using the orientation fixed by the frame above, we
obtain
\[
        J_P(\theta(u))\det(\theta_1,\ldots,\theta_d,\theta)
        =
        \det(F_1,\ldots,F_d,mF).
\]
Thus,
\[
        j(u)\frac1{w(u)}=ms(u),
\]
or equivalently
\begin{equation}\label{j-mws}
        j(u)=mw(u)s(u).
\end{equation}

At \(u=0\), the normalization \eqref{normalization} gives
\[
        F(0)=e_n,\qquad F_\alpha(0)=e_\alpha,\qquad \alpha=1,\ldots,d.
\]
Therefore,
\[
        C(0)=I,
        \qquad
        s(0)=1.
\]
Moreover,
\[
        w(0)=1,
        \qquad
        \nabla w(0)=0,
        \qquad
        \Delta w(0)=-d.
\]
Indeed,
\[
        w(u)=1-\frac12|u|^2+O(|u|^4).
\]
Applying the Laplacian to \eqref{j-mws} and using the product rule, we get
\[
        \frac1m\Delta j(0)
        =
        \Delta(ws)(0)
        =
        w(0)\Delta s(0)+s(0)\Delta w(0)+2\nabla w(0)\cdot\nabla s(0).
\]
Hence,
\begin{equation}\label{Deltaj-Deltas}
        \frac1m\Delta j(0)=\Delta s(0)-d.
\end{equation}

It remains to compute \(\Delta s(0)\). Since \(C(0)=I\), the standard first
and second differential formulas for the determinant at the identity give
\[
        s_\gamma(0)=\tr C_\gamma(0),
\]
and
\begin{equation}\label{det-second}
        s_{\gamma\gamma}(0)
        =
        \tr C_{\gamma\gamma}(0)
        +
        \bigl(\tr C_\gamma(0)\bigr)^2
        -
        \tr\bigl(C_\gamma(0)^2\bigr).
\end{equation}
Indeed,
\[
        D(\det)_I(A)=\tr A,
        \qquad
        D^2(\det)_I(A,A)=(\tr A)^2-\tr(A^2).
\]

We now compute the first trace. Since
\[
        C(u)=\bigl(F_1(u),\ldots,F_d(u),F(u)\bigr),
\]
we have
\[
        C_\gamma(u)=\bigl(F_{1\gamma}(u),\ldots,F_{d\gamma}(u),F_\gamma(u)\bigr).
\]
At \(u=0\), the last column is
\[
        F_\gamma(0)=e_\gamma.
\]
Since \(\gamma\le d\), this last column contributes zero to the last diagonal
entry. Therefore,
\begin{equation}\label{trace-Cgamma}
        \tr C_\gamma(0)
        =
        \sum_{p=1}^{d}F^p_{p\gamma}(0)
        =
        \sum_{p=1}^{d}H^p_{p\gamma}
        =
        T_\gamma.
\end{equation}
Together with \(\nabla w(0)=0\), this gives
\[
        \partial_\gamma j(0)=ms_\gamma(0)=mT_\gamma,
\]
which proves \eqref{first-derivative-j}.

We next compute
\[
        \Delta s(0)=\sum_{\gamma=1}^{d}s_{\gamma\gamma}(0).
\]
From \eqref{det-second},
\[
        \Delta s(0)
        =
        \sum_{\gamma=1}^{d}\tr C_{\gamma\gamma}(0)
        +
        \sum_{\gamma=1}^{d}T_\gamma^2
        -
        \sum_{\gamma=1}^{d}\tr\bigl(C_\gamma(0)^2\bigr).
\]
Thus, we have to evaluate the first and third terms.

Since each coordinate \(F^i\) is the restriction to the sphere of a
homogeneous harmonic polynomial of degree \(m\), it is a spherical harmonic
of degree \(m\). Hence,
\[
        \Delta_{S^d}F^i=-\lambda F^i,
        \qquad
        \lambda=m(m+d-1).
\]
At \(u=0\), the local formula for \(\Delta_{S^d}\) reduces to the Euclidean
tangent Laplacian, and therefore
\begin{equation}\label{second-spherical}
        \sum_{\gamma=1}^{d}F^i_{\gamma\gamma}(0)
        =
        -\lambda F^i(0).
\end{equation}
Differentiating the local expression
\[
        \Delta_{S^d}\phi
        =
        \sum_{\alpha,\beta=1}^{d}
        (\delta_{\alpha\beta}-u_\alpha u_\beta)\phi_{\alpha\beta}
        -
        d\sum_{\alpha=1}^{d}u_\alpha\phi_\alpha
\]
and evaluating at \(u=0\), we obtain
\begin{equation}\label{third-spherical}
        \sum_{\gamma=1}^{d}F^i_{\gamma\gamma\alpha}(0)
        =
        -(\lambda-d)F^i_\alpha(0).
\end{equation}
Indeed, the derivative of the coefficient
\((\delta_{\alpha\beta}-u_\alpha u_\beta)\) vanishes at \(u=0\), while
differentiating the first-order term gives \(-dF^i_\alpha(0)\).

Now
\[
        C_{\gamma\gamma}(u)
        =
        \bigl(F_{1\gamma\gamma}(u),\ldots,F_{d\gamma\gamma}(u),
              F_{\gamma\gamma}(u)\bigr).
\]
Therefore,
\[
        \tr C_{\gamma\gamma}(0)
        =
        \sum_{p=1}^{d}F^p_{p\gamma\gamma}(0)
        +
        F^n_{\gamma\gamma}(0).
\]
Summing over \(\gamma\), and using \eqref{third-spherical} with \(i=p\) and
\(\alpha=p\), we get
\[
        \sum_{\gamma=1}^{d}F^p_{p\gamma\gamma}(0)
        =
        \sum_{\gamma=1}^{d}F^p_{\gamma\gamma p}(0)
        =
        -(\lambda-d)F^p_p(0).
\]
By the normalization \(F_p(0)=e_p\), we have \(F^p_p(0)=1\). Thus,
\[
        \sum_{\gamma=1}^{d}F^p_{p\gamma\gamma}(0)=-(\lambda-d).
\]
Summing this over \(p=1,\ldots,d\), and using \eqref{second-spherical} with
\(i=n\), we obtain
\begin{equation}\label{trace-Cgammagamma}
        \sum_{\gamma=1}^{d}\tr C_{\gamma\gamma}(0)
        =
        -d(\lambda-d)-\lambda.
\end{equation}

It remains to compute the square term. For fixed \(\gamma\), the matrix
\(C_\gamma(0)\) has the block form
\[
        C_\gamma(0)
        =
        \begin{pmatrix}
        A_\gamma & e_\gamma\\
        b_\gamma^T & 0
        \end{pmatrix},
\]
where
\[
        (A_\gamma)^p_q=H^p_{q\gamma},
        \qquad
        (b_\gamma)_q=H^n_{q\gamma},
        \qquad
        p,q=1,\ldots,d.
\]
For such a block matrix,
\[
        \tr
        \begin{pmatrix}
        A & v\\
        b^T & 0
        \end{pmatrix}^{\!2}
        =
        \tr(A^2)+2b^Tv.
\]
Here \(v=e_\gamma\), so
\[
        b_\gamma^Te_\gamma=H^n_{\gamma\gamma}.
\]
Also,
\[
        \tr(A_\gamma^2)=\sum_{p,q=1}^{d}H^p_{q\gamma}H^q_{p\gamma}.
\]
Consequently,
\[
        \tr\bigl(C_\gamma(0)^2\bigr)
        =
        \sum_{p,q=1}^{d}H^p_{q\gamma}H^q_{p\gamma}
        +
        2H^n_{\gamma\gamma}.
\]
Summing over \(\gamma\), we obtain
\[
        \sum_{\gamma=1}^{d}\tr\bigl(C_\gamma(0)^2\bigr)
        =
        Q(H)+2\sum_{\gamma=1}^{d}H^n_{\gamma\gamma}.
\]
But \(H^n_{\gamma\gamma}=F^n_{\gamma\gamma}(0)\), and by
\eqref{second-spherical} with \(i=n\),
\[
        \sum_{\gamma=1}^{d}F^n_{\gamma\gamma}(0)
        =
        -\lambda F^n(0)
        =
        -\lambda,
\]
because \(F(0)=e_n\). Therefore,
\begin{equation}\label{trace-Cgamma-square}
        \sum_{\gamma=1}^{d}\tr\bigl(C_\gamma(0)^2\bigr)
        =
        Q(H)-2\lambda.
\end{equation}

Combining \eqref{det-second}, \eqref{trace-Cgamma},
\eqref{trace-Cgammagamma}, and \eqref{trace-Cgamma-square}, we get
\[
\begin{aligned}
        \Delta s(0)
        &=
        \sum_{\gamma=1}^{d}T_\gamma^2
        +
        \bigl[-d(\lambda-d)-\lambda\bigr]
        -
        \bigl[Q(H)-2\lambda\bigr]  \\
        &=
        \sum_{\gamma=1}^{d}T_\gamma^2
        -
        Q(H)
        -
        (d-1)\lambda
        +
        d^2 .
\end{aligned}
\]
Finally, by \eqref{Deltaj-Deltas},
\[
        \frac1m\Delta j(0)
        =
        \sum_{\gamma=1}^{d}T_\gamma^2
        -
        Q(H)
        -
        (d-1)\lambda
        +
        d^2-d.
\]
Since
\[
        \lambda-d=m(m+d-1)-d=(m-1)(m+d)=\mu,
\]
we obtain
\[
        \Delta j(0)
        =
        m\left(
          \sum_{\gamma=1}^{d}T_\gamma^2
          -
          Q(H)
          -
          (d-1)\mu
        \right),
\]
which is \eqref{trace-formula}.
\end{proof}

\section{Homogeneous obstructions in dimension three}
\label{sec-obstruction}

We now prove Theorem~\ref{thm-homogeneous-qr-obstruction}. The proof uses
only the second-order trace formula and the topology of quasiregular
mappings.

\begin{proposition}
\label{prop-no-positive-homogeneous-jacobian-R3}
Let \(m>1\). There is no homogeneous harmonic polynomial mapping
\(P:\R^3\to\R^3\) of degree \(m\) such that
\[
        J_P(\theta)>0,
        \qquad
        \theta\in S^2.
\]
The same conclusion with \(J_P<0\) follows after reversing the orientation in
the target.
\end{proposition}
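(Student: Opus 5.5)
The plan is to argue by contradiction, applying the trace identity of Proposition~\ref{prop-trace-formula} with \(n=3\), \(d=2\) at a minimum point of \(J_P\) on \(S^2\). Suppose \(J_P>0\) on \(S^2\). Since \(S^2\) is compact, \(J_P|_{S^2}\) attains a positive minimum at some \(\theta_0\).

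\emph{Step 1: normalization.} First rotate the domain by some \(R\in SO(3)\) with \(R e_3=\theta_0\). Then \(P\circ R\) is again homogeneous harmonic of degree \(m\) and \(J_{P\circ R}=J_P\circ R\), so the minimum sits at \(N=e_3\). Now let \(F(u)=P(\theta(u))\). Since \(\theta_1(0)=e_1\) and \(\theta_2(0)=e_2\), Lemma~\ref{lem-spherical-jacobian} gives
\[
        0<J_P(N)=m\det(F_1(0),F_2(0),F(0)).
\]
So there is a linear map \(A\) with \(\det A>0\) sending \(F_1(0),F_2(0),F(0)\) to \(e_1,e_2,e_3\). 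Replacing \(P\) by \(AP\) multiplies \(J_P\) by the positive constant \(\det A\). Hence \(N\) is still a positive minimum, and the normalization \eqref{normalization} holds. In the coordinates \(u\), \(j=J_P\circ\theta\) then has an interior local minimum at \(u=0\). This gives \(\nabla j(0)=0\) and a positive semidefinite Hessian, so the flat Laplacian satisfies \(\Delta j(0)\ge0\).

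\emph{Step 2: using the first-order conditions and the harmonic constraints.} By \eqref{first-derivative-j}, \(T_1=T_2=0\). Since \(F^1(0)=F^2(0)=0\), \eqref{second-spherical} gives \(H^p_{11}+H^p_{22}=0\) for \(p=1,2\). Write
\[
        H^1_{11}=-H^1_{22}=a,\quad H^1_{12}=b,\quad H^2_{11}=-H^2_{22}=c,\quad H^2_{12}=e,
\]
using symmetry of \(H^p_{\alpha\beta}\) in \(\alpha,\beta\). The two equations \(T_\gamma=0\) read \(a+e=0\) and \(b-c=0\). Substituting into
\[
        Q(H)=\sum_{\gamma}\bigl[(H^1_{1\gamma})^2+(H^2_{2\gamma})^2+2H^1_{2\gamma}H^2_{1\gamma}\bigr]
\]
yields \(2a^2+2b^2\) for each \(\gamma\), so \(Q(H)=4(a^2+b^2)\ge0\). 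This sign computation is the one place where \(d=2\) matters. With more tangent directions, the constraints \(T_\gamma=0\) and the trace conditions no longer pin \(Q\) down to a sum of squares, so I expect this to be the delicate step, and I would verify it entry by entry.

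\emph{Step 3: conclusion.} Inserting these values into \eqref{trace-formula} gives
\[
        \Delta j(0)=m\bigl(0-4(a^2+b^2)-(m-1)(m+2)\bigr)<0,
\]
since \(m>1\). This contradicts \(\Delta j(0)\ge0\) at the minimum. The case \(J_P<0\) follows by composing \(P\) with a reflection in the target, which keeps the map homogeneous and harmonic and reverses the sign of \(J_P\).
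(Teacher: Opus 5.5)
Your proof is correct and follows the paper's argument essentially line by line. You normalize at a positive minimum, use $\partial_\gamma j(0)=mT_\gamma$ together with the trace-free constraints $H^p_{11}+H^p_{22}=0$ to reduce $Q(H)$ to $4(a^2+b^2)$, and then read off $\Delta j(0)=-m\{4(a^2+b^2)+(m-1)(m+2)\}<0$ from the trace formula. You also spell out the invariances $J_{P\circ R}=J_P\circ R$ and $J_{AP}=(\det A)\,J_P$, which the paper leaves implicit.
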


\begin{proof}
Assume, to the contrary, that \(J_P>0\) on \(S^2\). Since \(S^2\) is compact,
\(J_P\) has a positive minimum. Rotate the domain so that the minimum point
is \(N=e_3\). By Lemma~\ref{lem-spherical-jacobian}, the three vectors
\[
        P(N),\quad P_x(N),\quad P_y(N)
\]
are linearly independent. Composing in the target with a linear map of
positive determinant, we may assume
\[
        P(N)=e_3,
        \qquad
        P_x(N)=e_1,
        \qquad
        P_y(N)=e_2.
\]
Use local coordinates
\[
        \theta(u,v)=\bigl(u,v,\sqrt{1-u^2-v^2}\bigr),
\]
and put \(F=P\circ\theta\), \(j=J_P\circ\theta\). Write
\[
        a=F^1_{uu}(0),
        \qquad
        b=F^1_{uv}(0),
        \qquad
        c=F^2_{uu}(0),
        \qquad
        d_0=F^2_{uv}(0).
\]
Because \(F^1\) and \(F^2\) are spherical harmonics and vanish at \(0\),
their second derivatives satisfy
\[
        F^1_{vv}(0)=-a,
        \qquad
        F^2_{vv}(0)=-c.
\]
In the notation of Proposition~\ref{prop-trace-formula}, with \(d=2\),
\[
        T_1=a+d_0,
        \qquad
        T_2=b-c.
\]
Since \((0,0)\) is a local minimum of \(j\), \eqref{first-derivative-j} gives
\[
        T_1=T_2=0.
\]
Hence,
\[
        d_0=-a,
        \qquad
        c=b.
\]
A direct substitution into \(Q(H)\) gives
\[
        Q(H)=4(a^2+b^2).
\]
Since \(d=2\), \(\mu=(m-1)(m+2)>0\). By \eqref{trace-formula},
\[
        \Delta j(0)
        =
        -m\left\{4(a^2+b^2)+(m-1)(m+2)\right\}<0.
\]
This contradicts the necessary condition \(\Delta j(0)\ge0\) at a local
minimum. Therefore, no such \(P\) exists. If \(J_P<0\), compose \(P\) with an
orientation-reversing linear isometry of the target to reduce to the positive
case.
\end{proof}

\begin{proof}[Proof of Theorem~\ref{thm-homogeneous-qr-obstruction}]
Suppose, to the contrary, that a nonconstant homogeneous harmonic polynomial
mapping \(P:\R^3\to\R^3\) of degree \(m>1\) is quasiregular. After a
reflection in the target, if necessary, assume that \(P\) is
sense-preserving. Since \(P\) is smooth, the distortion inequality holds
everywhere by Lemma~\ref{lem-pointwise-distortion}. If \(J_P(\theta)=0\) at
some point \(\theta\in S^2\), then \(DP(\theta)=0\). Euler's identity gives
\[
        0=DP(\theta)\theta=mP(\theta),
\]
and hence \(P(\theta)=0\). By homogeneity, \(P(r\theta)=0\) for every
\(r>0\), so the preimage of \(0\) contains a whole ray. This contradicts the
discreteness of nonconstant quasiregular mappings. Thus, \(J_P\) does not
vanish on \(S^2\). Since \(P\) is sense-preserving quasiregular, \(J_P\ge0\)
almost everywhere, and the continuity of \(J_P\) gives \(J_P\ge0\) everywhere.
Therefore, \(J_P>0\) on \(S^2\), contradicting
Proposition~\ref{prop-no-positive-homogeneous-jacobian-R3}.
\end{proof}

We next isolate the nodal-set input used in the proof of the homogeneous
non-injectivity theorem.

\begin{lemma}
\label{lem:critical-zero-nodal-crossing}
Let \(u\) be a nontrivial real spherical harmonic on \(S^2\). Suppose that
\[
        u(\theta_0)=0,
        \qquad
        \nabla_{S^2}u(\theta_0)=0
\]
for some \(\theta_0\in S^2\). Then the nodal set
\[
        Z(u)=\{\theta\in S^2:u(\theta)=0\}
\]
is not locally homeomorphic to an open interval at \(\theta_0\). More
precisely, there exists an integer \(q\ge2\) such that, in local conformal
coordinates \(z=x+iy\) centered at \(\theta_0\),
\[
        u(x,y)=\operatorname{Re}(\alpha z^q)+O(|z|^{q+1}),
        \qquad
        \alpha\in\C\setminus\{0\}.
\]
Consequently, \(Z(u)\) consists locally of \(2q\ge4\) analytic half-arcs
issuing from \(\theta_0\), with equal angles.
\end{lemma}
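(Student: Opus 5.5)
We work in isothermal coordinates around \(\theta_0\), say stereographic projection from \(-\theta_0\), so that \(\theta_0\) corresponds to \(z=0\) and the round metric is \(g=\rho(z)^2|dz|^2\) with \(\rho\) real analytic and positive. The Laplace--Beltrami operator is then conformally Euclidean, \(\Delta_{S^2}=\rho^{-2}\Delta_{\R^2}\). The eigenvalue equation \(\Delta_{S^2}u=-\lambda u\) therefore becomes the planar Schr\"odinger-type equation
\[
        \Delta u=-\lambda\rho^2 u .
\]
Since \(u\) is the restriction of a polynomial, it is real analytic on \(S^2\), hence real analytic in \(z\). The first step is to expand \(u=\sum_{k\ge q}u_k\) into homogeneous polynomials in \((x,y)\), where \(u_q\not\equiv0\) is the lowest nonzero term. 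This is possible because \(u\) is nontrivial, and by unique continuation for analytic functions \(u\) cannot vanish to infinite order.

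The hypotheses \(u(\theta_0)=0\) and \(\nabla u(\theta_0)=0\) give \(q\ge2\). Comparing the lowest-order terms in \(\Delta u=-\lambda\rho^2u\), the left side has lowest term \(\Delta u_q\) of degree \(q-2\), while the right side vanishes to order \(q\). Hence \(\Delta u_q=0\). A homogeneous harmonic polynomial of degree \(q\) in the plane has the form \(\operatorname{Re}(\alpha z^q)\) with \(\alpha\ne0\), which yields the stated expansion \(u=\operatorname{Re}(\alpha z^q)+O(|z|^{q+1})\).

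The remaining step, which is the main technical point, is the local structure of the nodal set. Rotating \(z\), we may take \(\alpha=1\), so that in polar coordinates
\[
        u=r^q\bigl(\cos q\phi+r\,R(r,\phi)\bigr),
\]
with \(R\) real analytic in \((r,\phi)\), because each \(u_k/r^k\) is a trigonometric polynomial and the series converges. For \(r>0\) the zero set is the zero set of
\[
        G(r,\phi)=\cos q\phi+rR(r,\phi),
\]
and \(G\) extends analytically to a neighborhood of \(r=0\). At \(r=0\) the function \(\cos q\phi\) has exactly \(2q\) simple zeros, at \(\phi_k=(2k+1)\pi/(2q)\), and its derivative in \(\phi\) is nonzero there. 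The implicit function theorem then gives analytic curves \(\phi=\phi_k(r)\), \(0\le r<r_0\), along which \(G=0\). Away from these zeros \(\lvert\cos q\phi\rvert\) is bounded below, so \(G\neq0\) for small \(r\); uniformity in \(\phi\) follows from compactness of the circle. Consequently \(Z(u)\) near \(\theta_0\) is exactly the union of \(2q\) analytic half-arcs issuing from \(\theta_0\) with tangent directions \(\phi_k\). These directions are equally spaced by \(\pi/q\).

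Finally, a union of \(2q\ge4\) arcs meeting only at a common endpoint is not locally homeomorphic to an open interval. Removing \(\theta_0\) from any small connected neighborhood of it in \(Z(u)\) leaves at least \(4\) components, whereas removing a point from an interval leaves only \(2\). This proves the lemma.
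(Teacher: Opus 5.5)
Your proposal is correct, and its first half coincides with the paper's argument: conformal coordinates, the equation $\Delta u=-\lambda\rho^2u$, and the lowest-order comparison giving $\Delta u_q=0$, hence $u_q=\operatorname{Re}(\alpha z^q)$ with $q\ge2$.

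The routes differ at the nodal-structure step. The paper cites Cheng's local structure theorem for nodal sets of eigenfunctions on surfaces. You instead prove that step directly: you write $u=r^qG(r,\phi)$ with $G(0,\phi)=\cos q\phi$, apply the analytic implicit function theorem at the $2q$ simple zeros $\phi_k=(2k+1)\pi/(2q)$, and add a uniform lower bound for $|G|$ away from those zeros.

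Your version works cleanly because $u$ is real analytic. Finite-order vanishing is then automatic, with no unique continuation theorem needed. Moreover, $u/r^q$ extends analytically across $r=0$, so the arcs come out analytic with tangent directions spaced by $\pi/q$, exactly as the statement claims. Cheng's theorem covers merely smooth eigenfunctions of Schr\"odinger-type operators. That is more generality than this lemma needs, but citing it spares the paper the computation.

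You also make the final topological step explicit, which the paper only asserts: a small connected neighborhood of $\theta_0$ in $Z(u)$, minus $\theta_0$, has at least $2q\ge4$ components, while an interval minus a point has two.

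One cosmetic point: to take $\alpha=1$ you need a rotation of $z$ together with division of $u$ by $|\alpha|$. This leaves $Z(u)$ unchanged.
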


\begin{proof}
Since \(u\) is a spherical harmonic of some degree \(m\), it satisfies
\[
        \Delta_{S^2}u+m(m+1)u=0.
\]
Choose local conformal coordinates \(z=x+iy\) centered at \(\theta_0\). In
these coordinates the spherical metric has the form
\[
        \mathrm{d}s^2=e^{2\varphi(x,y)}(\mathrm{d}x^2+\mathrm{d}y^2),
\]
and hence the eigenvalue equation becomes
\[
        \Delta u+m(m+1)e^{2\varphi(x,y)}u=0
\]
for a smooth real-valued function \(\varphi\).

Since \(u(\theta_0)=0\) and \(\nabla_{S^2}u(\theta_0)=0\), the order of
vanishing of \(u\) at \(\theta_0\) is some integer \(q\ge2\). Thus,
\[
        u(x,y)=H_q(x,y)+O(|z|^{q+1}),
\]
where \(H_q\) is a nonzero homogeneous polynomial of degree \(q\). Comparing
the lowest-order terms in the preceding elliptic equation gives
\[
        \Delta H_q=0.
\]
Therefore, \(H_q\) is a nonzero homogeneous harmonic polynomial in two
variables. Hence, there exists \(\alpha\in\C\setminus\{0\}\) such that
\[
        H_q(x,y)=\operatorname{Re}(\alpha z^q).
\]
Thus,
\[
        u(x,y)=\operatorname{Re}(\alpha z^q)+O(|z|^{q+1}).
\]

The standard local structure theorem for nodal sets of eigenfunctions on
surfaces now implies that the nodal set \(Z(u)\) consists locally of \(2q\)
analytic half-arcs issuing from \(\theta_0\), with equal angles, see
Cheng~\cite{Cheng1976}. Since \(q\ge2\), at least four half-arcs meet at
\(\theta_0\). Hence, \(Z(u)\) is not locally homeomorphic to an open interval
at \(\theta_0\).
\end{proof}

\begin{proof}[Proof of Theorem~\ref{thm-homogeneous-noninjectivity}]
Suppose, to the contrary, that
\[
        P:\R^3\to\R^3
\]
is a one-to-one homogeneous harmonic polynomial mapping of degree \(m>1\).

If \(m\) is even, then
\[
        P(-x)=P(x),
\]
and injectivity is impossible. Hence, \(m\) is odd. Since \(P\) is homogeneous
and one-to-one, we have
\[
        P^{-1}(0)=\{0\}.
\]
Therefore, the radial normalization
\[
        F:S^2\to S^2,
        \qquad
        F(\theta)=\frac{P(\theta)}{|P(\theta)|},
\]
is well-defined.

We first show that \(F\) is one-to-one. Indeed, if \(F(\theta_1)=F(\theta_2)\),
then there exists \(c>0\) such that
\[
        P(\theta_1)=cP(\theta_2).
\]
Choosing \(s=c^{1/m}>0\), homogeneity gives
\[
        P(\theta_1)=P(s\theta_2).
\]
Since \(P\) is one-to-one, \(\theta_1=s\theta_2\). As
\(\theta_1,\theta_2\in S^2\), it follows that \(s=1\), and hence
\[
        \theta_1=\theta_2.
\]
Thus, \(F\) is injective. Since \(S^2\) is compact, \(F\) is a homeomorphism
onto its image. By invariance of domain, \(F(S^2)\) is open in \(S^2\).
It is also compact, hence closed. Since \(S^2\) is connected and
\(F(S^2)\neq\varnothing\), we obtain
\[
        F(S^2)=S^2.
\]
Thus, \(F\) is a homeomorphism of \(S^2\).

We claim next that
\[
        J_P(\theta)\neq0,
        \qquad
        \theta\in S^2.
\]
Assume otherwise that
\[
        J_P(\theta_0)=0
\]
for some \(\theta_0\in S^2\). Choose an oriented orthonormal basis
\(\tau_1,\tau_2\) of \(T_{\theta_0}S^2\). By the spherical Jacobian identity,
\[
        J_P(\theta_0)
        =
        m\det\bigl(\mathrm{d}P_{\theta_0}\tau_1,
                   \mathrm{d}P_{\theta_0}\tau_2,
                   P(\theta_0)\bigr).
\]
Hence, the three vectors
\[
        \mathrm{d}P_{\theta_0}\tau_1,\qquad
        \mathrm{d}P_{\theta_0}\tau_2,\qquad
        P(\theta_0)
\]
are linearly dependent. Therefore, there exists a nonzero vector
\(a\in\R^3\) such that
\[
        a\cdot P(\theta_0)=0,
        \qquad
        a\cdot \mathrm{d}P_{\theta_0}\tau_1=0,
        \qquad
        a\cdot \mathrm{d}P_{\theta_0}\tau_2=0.
\]
Define
\[
        u(\theta)=a\cdot P(\theta),
        \qquad
        \theta\in S^2.
\]
Then \(u\) is a spherical harmonic of degree \(m\).

We first note that \(u\not\equiv0\). Indeed, if \(u\equiv0\), then
\[
        P(S^2)\subset a^\perp.
\]
Consequently,
\[
        F(S^2)
        =
        \left\{\frac{P(\theta)}{|P(\theta)|}:\theta\in S^2\right\}
        \subset S^2\cap a^\perp,
\]
which contradicts the fact that \(F\) is onto \(S^2\). Thus, \(u\) is
nontrivial.

Moreover,
\[
        u(\theta_0)=0,
        \qquad
        \nabla_{S^2}u(\theta_0)=0.
\]
By Lemma~\ref{lem:critical-zero-nodal-crossing}, the nodal set
\[
        Z(u)=\{\theta\in S^2:u(\theta)=0\}
\]
is not locally homeomorphic to an open interval at \(\theta_0\).

On the other hand, since \(F\) is a homeomorphism,
\[
        Z(u)
        =
        \{\theta\in S^2:a\cdot P(\theta)=0\}
        =
        F^{-1}(S^2\cap a^\perp).
\]
The set \(S^2\cap a^\perp\) is a great circle, hence a topological circle.
Therefore, \(Z(u)\) is homeomorphic to a circle. In particular, it is locally
homeomorphic to an open interval at every point. This contradicts the
conclusion of Lemma~\ref{lem:critical-zero-nodal-crossing}.

Thus,
\[
        J_P(\theta)\neq0,
        \qquad
        \theta\in S^2.
\]
Since \(S^2\) is connected, \(J_P\) has a constant sign on \(S^2\). Reversing
orientation in the target if necessary, we may assume
\[
        J_P(\theta)>0,
        \qquad
        \theta\in S^2.
\]
This contradicts Proposition~\ref{prop-no-positive-homogeneous-jacobian-R3}.
Therefore, no one-to-one homogeneous harmonic polynomial mapping
\[
        P:\R^3\to\R^3
\]
of degree \(m>1\) exists.
\end{proof}

\section{The Jacobian nonvanishing theorem and global consequences}
\label{sec-lewy}

\begin{proof}[Proof of Theorem~\ref{thm-main}]
Let \(f:\Omega\to\R^3\) be a nonconstant sense-preserving quasiregular
harmonic mapping. Since the coordinate functions of \(f\) are harmonic,
\(f\) is real analytic. Suppose, to the contrary, that
\[
        J_f(a)=0
\]
for some \(a\in\Omega\). By Lemma~\ref{lem-pointwise-distortion}, the
distortion inequality holds pointwise for the smooth mapping \(f\). Hence,
\[
        \norm{Df(a)}^3\le KJ_f(a)=0,
\]
and therefore
\[
        Df(a)=0.
\]
By Lemma~\ref{lem-analytic-blow-up}, the first nonzero Taylor term of
\(f-f(a)\) at \(a\) is a nonzero homogeneous harmonic polynomial mapping
\[
        P_m:\R^3\to\R^3,\qquad m\ge2.
\]
Moreover, the rescaled mappings
\[
        f_r(x)=\frac{f(a+rx)-f(a)}{r^m}
\]
converge locally uniformly, in fact in \(C^\infty_{\operatorname{loc}}\), to
\(P_m\) as \(r\downarrow0\). Each \(f_r\) is \(K\)-quasiregular on its
rescaled domain, because domain dilations, target translations and positive
target homotheties preserve the quasiregular distortion inequality. Since
\(P_m\) is nonconstant, Reshetnyak compactness implies that \(P_m\) is
\(K\)-quasiregular. This contradicts
Theorem~\ref{thm-homogeneous-qr-obstruction}, which excludes nonconstant
quasiregular homogeneous harmonic polynomial mappings \(\R^3\to\R^3\) of
degree \(m>1\).

Thus, \(J_f\) has no zeros in \(\Omega\). Since \(f\) is sense-preserving
quasiregular, \(J_f\ge0\) almost everywhere. As \(J_f\) is continuous, this
implies \(J_f\ge0\) everywhere. Since \(J_f\) has no zeros, we obtain
\[
        J_f(x)>0,
        \qquad
        x\in\Omega.
\]

If \(f:\Omega\to\Omega'\) is a quasiconformal harmonic mapping which is
orientation-reversing, compose it with a fixed orientation-reversing linear
isometry of the target. The resulting mapping is sense-preserving, harmonic
and quasiconformal, so the previous argument gives nonvanishing Jacobian for
the reflected mapping. Hence, the original mapping also satisfies \(J_f\ne0\).
The inverse function theorem then shows that \(f\) is a local harmonic
diffeomorphism.
\end{proof}

\begin{corollary}
Let \(f:\Omega\subset\R^3\to\R^3\) be a nonconstant sense-preserving
quasiregular harmonic mapping. Then the branch set of \(f\) is empty and, more
strongly, \(Df\) is invertible at every point.
\end{corollary}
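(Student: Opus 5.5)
This corollary is a direct repackaging of Theorem~\ref{thm-main}, and I would derive both conclusions from \(J_f>0\) together with the smoothness of harmonic maps.

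First, since the coordinate functions of \(f\) are harmonic, \(f\) is real analytic and in particular \(C^1\). Theorem~\ref{thm-main} applies verbatim: \(f\) is nonconstant, sense-preserving, quasiregular and harmonic componentwise, so \(J_f(x)>0\) for every \(x\in\Omega\). Since \(J_f(x)=\det Df(x)\neq0\), the differential \(Df(x)\) is an invertible linear map at every point. This is the stronger conclusion.

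Second, the branch set. Recall that it is defined as the set of points at which \(f\) fails to be a local homeomorphism. Fix \(x\in\Omega\). Because \(f\) is \(C^1\) and \(Df(x)\) is invertible, the inverse function theorem gives an open neighbourhood \(U\ni x\) such that \(f|_U\) is a diffeomorphism onto the open set \(f(U)\). In fact \(f|_U\) is a real-analytic diffeomorphism, by the analytic inverse function theorem. Hence \(x\) is not a branch point. Since \(x\) was arbitrary, the branch set is empty.

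There is no genuine obstacle here; the only point requiring care is that Theorem~\ref{thm-main} gives pointwise (not merely almost-everywhere) positivity of \(J_f\), which is exactly what the inverse function theorem needs. It is also worth noting that the emptiness of the branch set would already follow from the Martio--Rickman--V\"ais\"al\"a smoothness theorem quoted in the introduction. The new content of the corollary is therefore the invertibility of \(Df\) everywhere.
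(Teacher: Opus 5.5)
Your proposal is correct and follows the paper's proof essentially verbatim. Theorem~\ref{thm-main} gives \(J_f>0\) at every point, so \(Df\) is invertible everywhere, and the inverse function theorem then makes \(f\) a local homeomorphism, so the branch set is empty; your closing remark that this emptiness already follows from the Martio--Rickman--V\"ais\"al\"a theorem, leaving the invertibility of \(Df\) as the real content, matches the paper's own comment.
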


\begin{proof}
By Theorem~\ref{thm-main}, \(J_f>0\) everywhere. Hence, \(Df\) is invertible
at every point, and the conclusion follows from the inverse function theorem.
The topological local-homeomorphism conclusion also follows from the general
smooth quasiregular local-invertibility theorem cited in the introduction. The
additional content here is the nonvanishing of the Jacobian.
\end{proof}

\begin{remark}
The bounded-distortion assumption in Theorem~\ref{thm-main} is essential to
the statement. Wood \cite{Wood1991} constructed a harmonic homeomorphism in
higher dimension whose Jacobian vanishes at an interior point. Thus,
harmonicity and topological injectivity alone do not force local
diffeomorphism once one leaves the planar setting. The point of
Theorem~\ref{thm-main} is that, in dimension three, the quasiregular
distortion inequality rules out precisely the homogeneous critical blow-ups
that could occur at a zero of the Jacobian.
\end{remark}

\subsection{Riemannian variants}

The preceding proof is local and depends only on the Euclidean principal part
of the harmonic map equation. We record the corresponding Riemannian form,
which is useful for separating the geometric content of the argument from the
choice of Euclidean coordinates. In this subsection, a sense-preserving
\(K\)-quasiregular mapping between oriented Riemannian three-manifolds means a
nonconstant mapping satisfying
\[
        J_f^{g,h}\ge 0\quad\text{a.e.},
        \qquad
        \|\mathrm{d}f\|_{g,h}^3\le KJ_f^{g,h}\quad\text{a.e.},
\]
where \(J_f^{g,h}\) is the Riemannian Jacobian determined by the metrics and
orientations.

\begin{corollary}
\label{cor-riemannian-lewy}
Let \((M^3,g)\) and \((N^3,h)\) be connected oriented real-analytic
Riemannian three-manifolds. Let
\[
        f:(M^3,g)\to (N^3,h)
\]
be a nonconstant sense-preserving \(K\)-quasiregular mapping which is a
classical real-analytic harmonic map. Then its Riemannian Jacobian satisfies
\[
        J_f^{g,h}(p)>0,
        \qquad p\in M.
\]
Consequently, \(f\) is a local real-analytic diffeomorphism.
\end{corollary}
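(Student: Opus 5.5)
The plan is to repeat the Euclidean blow-up argument of Theorem~\ref{thm-main} in suitable local coordinates. Fix \(p\in M\) with \(J_f^{g,h}(p)=0\), aiming for a contradiction. Take \(g\)-normal coordinates \(x\) centred at \(p\) and \(h\)-normal coordinates \(y\) centred at \(f(p)\). Both are real-analytic, since the metrics are, and we choose them positively oriented. In these coordinates \(g_{ij}(0)=\delta_{ij}\) and \(h_{\alpha\beta}(0)=\delta_{\alpha\beta}\), and the Christoffel symbols vanish at the centres. The Riemannian Jacobian and operator norm differ from the Euclidean ones by factors that are continuous, positive, and equal to \(1\) at the centre. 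Hence, on a small ball, the coordinate expression \(\tilde f\) satisfies a pointwise distortion inequality \(\|D\tilde f\|^3\le K'J_{\tilde f}\) with \(K'\) close to \(K\), and \(J_{\tilde f}\ge0\). The map \(\tilde f\) is \(C^1\), so Lemma~\ref{lem-pointwise-distortion} applies. Since \(J_{\tilde f}(0)=0\), it gives \(D\tilde f(0)=0\).

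Next I use the harmonic map equation in these coordinates:
\[
        g^{ij}(x)\bigl(\partial_{ij}\tilde f^\gamma-\Gamma^k_{ij}(x)\partial_k\tilde f^\gamma\bigr)
        +g^{ij}(x)\,{}^h\Gamma^\gamma_{\alpha\beta}(\tilde f)\,\partial_i\tilde f^\alpha\partial_j\tilde f^\beta=0 .
\]
Because \(f\) is real analytic and nonconstant, \(\tilde f\) has a first nonvanishing homogeneous Taylor term \(P_m\) with \(m\ge2\). Substitute the expansion and collect the lowest-order terms, which have degree \(m-2\).
\begin{itemize}
\item The principal part contributes \(\Delta P_m\), because \(g^{ij}=\delta^{ij}+O(|x|^2)\).
\item The term \(g^{ij}\Gamma^k_{ij}\partial_k\tilde f\) is \(O(|x|)\cdot O(|x|^{m-1})=O(|x|^m)\).
\item The quadratic term is \(O(|x|^{2m-2})\), which is of higher order than \(m-2\) since \(m\ge2\).
\end{itemize}
Therefore \(\Delta P_m=0\), so \(P_m\) is a nonzero homogeneous harmonic polynomial mapping of degree \(m\ge2\).

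The rescalings \(\tilde f_r(x)=\tilde f(rx)/r^m\) converge locally uniformly to \(P_m\). On \(B(0,R)\) each \(\tilde f_r\) is \(K'_r\)-quasiregular with \(K'_r\to K\), because the metric distortion factors tend to \(1\) on shrinking balls. So for small \(r\) all of them are \(2K\)-quasiregular. Lemma~\ref{lem-reshetnyak}(ii) then shows that \(P_m\) is quasiregular. This contradicts Theorem~\ref{thm-homogeneous-qr-obstruction}.

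Hence \(J_f^{g,h}\) has no zeros. It is continuous and nonnegative almost everywhere, so \(J_f^{g,h}>0\) everywhere. The inverse function theorem in analytic charts then shows that \(f\) is a local real-analytic diffeomorphism.

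The main obstacle I expect is the bookkeeping in the order count showing that the leading term is Euclidean harmonic. There are two points to check. First, the metric correction \((g^{ij}-\delta^{ij})\partial_{ij}\tilde f\) must be \(O(|x|^m)\). This holds in normal coordinates, and even a first-order correction \(O(|x|)\) would already suffice. Second, the target Christoffel term must be genuinely of higher order; this needs \(2m-2>m-2\), which holds for every \(m\ge1\). A smaller point is uniform control of the distortion constant under the rescalings, which follows from continuity of the metrics at the centres.
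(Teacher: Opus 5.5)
Your proposal is correct and follows essentially the same route as the paper. The shared steps are: normal coordinates at $p$ and $f(p)$; pointwise distortion forcing $\mathrm{d}f_p=0$; a lowest-order count in the harmonic map equation showing that the first Taylor term $P_m$ is Euclidean harmonic; and rescaling plus Reshetnyak compactness to make $P_m$ quasiregular, contradicting Theorem~\ref{thm-homogeneous-qr-obstruction}. The one small difference is that you first turn the Riemannian distortion into a Euclidean inequality with constant close to $K$ on small coordinate balls and then use dilation invariance, instead of rescaling the metrics $g(rx)$ and $h(r^m y)$ as the paper does; this is a slightly cleaner way to get the uniform distortion bound the paper asserts.
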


\begin{proof}
The assertion is local. Fix \(a\in M\). Choose oriented normal coordinates
\(x=(x^1,x^2,x^3)\) for \(g\) centered at \(a\), and oriented normal
coordinates \(y=(y^1,y^2,y^3)\) for \(h\) centered at \(f(a)\). In these
coordinates we write again
\[
        f(x)=(f^1(x),f^2(x),f^3(x)),
        \qquad f(0)=0.
\]
Assume, to the contrary, that
\[
        J_f^{g,h}(a)=0.
\]
Since \(f\) is smooth and quasiregular, the distortion inequality holds
pointwise by continuity, and hence
\[
        \|\mathrm{d}f_a\|_{g,h}^3\le KJ_f^{g,h}(a)=0.
\]
Therefore, \(\mathrm{d}f_a=0\).

Since \(f\) is real analytic and nonconstant, there is a first nonzero
homogeneous Taylor term
\[
        f(x)=P_m(x)+O(|x|^{m+1}),
        \qquad m\ge2,
\]
where \(P_m:\R^3\to\R^3\) is a nonzero homogeneous polynomial mapping of
degree \(m\). We claim that \(P_m\) is harmonic componentwise with respect to
the Euclidean Laplacian.

In local coordinates, the harmonic map equation is
\[
g^{\alpha\beta}(x)
\left(
\partial_{\alpha\beta}f^i
-\Gamma^\gamma_{\alpha\beta}(x)\partial_\gamma f^i
+\widetilde\Gamma^i_{jk}(f(x))\partial_\alpha f^j\partial_\beta f^k
\right)=0,
\qquad i=1,2,3.
\]
In the chosen normal coordinates,
\[
        g^{\alpha\beta}(x)=\delta^{\alpha\beta}+O(|x|^2),
        \qquad
        \Gamma^\gamma_{\alpha\beta}(x)=O(|x|),
        \qquad
        \widetilde\Gamma^i_{jk}(y)=O(|y|).
\]
The lowest-order term in the equation is therefore
\[
        \delta^{\alpha\beta}\partial_{\alpha\beta}P_m^i
        =\Delta P_m^i,
\]
which is homogeneous of degree \(m-2\). All connection terms have strictly
higher order because \(m\ge2\). Hence,
\[
        \Delta P_m^i=0,
        \qquad i=1,2,3.
\]
Thus, \(P_m\) is a homogeneous Euclidean harmonic polynomial mapping.

Define the rescalings
\[
        f_r(x)=\frac{f(rx)}{r^m}.
\]
Then \(f_r\to P_m\) locally uniformly, in fact in
\(C^\infty_{\operatorname{loc}}\). Under this rescaling, the domain metrics
\(g(rx)\) and the target metrics \(h(r^m y)\), up to harmless constant
homotheties, converge in \(C^\infty_{\operatorname{loc}}\) to the Euclidean
metric. Consequently, on every fixed compact subset, the Riemannian
quasiregular inequality for \(f\) gives a uniform Euclidean quasiregular
distortion bound for \(f_r\), for all sufficiently small \(r\). Passing to the
locally uniform limit and using Reshetnyak compactness, we obtain that
\[
        P_m:\R^3\to\R^3
\]
is a nonconstant Euclidean quasiregular mapping.

This contradicts Theorem~\ref{thm-homogeneous-qr-obstruction}, since \(P_m\)
is homogeneous, Euclidean harmonic, and has degree \(m\ge2\). Therefore, the
Riemannian Jacobian cannot vanish. Since \(f\) is sense-preserving, continuity
and the a.e. inequality \(J_f^{g,h}\ge0\) give \(J_f^{g,h}\ge0\) everywhere.
The absence of zeros then gives \(J_f^{g,h}>0\) everywhere. The inverse
function theorem gives the local real-analytic diffeomorphism conclusion.
\end{proof}

\begin{corollary}
\label{cor-hyperbolic-lewy}
Let \(\Omega\subset\B^3\) be a domain equipped with the Poincar\'e metric
\(g_{\mathbb H}\), and let
\[
        f:\Omega\to\R^3
\]
be a nonconstant sense-preserving quasiregular mapping from
\((\Omega,g_{\mathbb H})\) to Euclidean \(\R^3\). Assume that the coordinate
functions are hyperbolic-harmonic,
\[
        \Delta_{g_{\mathbb H}}f^i=0,
        \qquad i=1,2,3.
\]
Then
\[
        J_f(x)>0,
        \qquad x\in\Omega,
\]
where \(J_f\) may be taken either as the Riemannian Jacobian or,
equivalently up to a positive conformal factor, as the Euclidean Jacobian.
Consequently, \(f\) is a local real-analytic diffeomorphism.
\end{corollary}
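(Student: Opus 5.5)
The plan is to reduce Corollary~\ref{cor-hyperbolic-lewy} to the Riemannian statement, Corollary~\ref{cor-riemannian-lewy}, or, if that is awkward because the target is only Euclidean, to rerun its blow-up proof directly. Take \(M=(\Omega,g_{\mathbb H})\) with \(g_{\mathbb H}=\rho(x)^2|dx|^2\), where \(\rho(x)=2/(1-|x|^2)\) is real analytic on \(\B^3\). Take \(N=\R^3\) with the flat metric. Both are oriented real-analytic Riemannian three-manifolds, and since the target is flat, the harmonic map equation for \(f:M\to N\) is exactly \(\Delta_{g_{\mathbb H}}f^i=0\) componentwise. Hyperbolic-harmonic functions are solutions of an elliptic equation with real-analytic coefficients, so each \(f^i\) is real analytic and \(f\) is a classical real-analytic harmonic map. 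Corollary~\ref{cor-riemannian-lewy} then gives \(J_f^{g,h}>0\) on \(\Omega\).

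To relate the two Jacobians, note that \(J_f^{g,h}=\rho^{-3}J_f\), where \(J_f\) is the Euclidean Jacobian. Likewise \(\|df\|_{g,h}=\rho^{-1}\|Df\|\), so \(\|df\|_{g,h}^3\le KJ_f^{g,h}\) is equivalent to the Euclidean inequality \(\|Df\|^3\le KJ_f\). Hence \(f\) is also Euclidean \(K\)-quasiregular, and the two Jacobians have the same sign. This justifies the phrase ``equivalently up to a positive conformal factor.''

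As a cross-check, or an alternative self-contained route, the plan is to argue in Euclidean coordinates. In dimension three the hyperbolic Laplacian reads
\[
\Delta_{g_{\mathbb H}}u=\rho^{-2}\bigl(\Delta u+\rho^{-1}\nabla\rho\cdot\nabla u\bigr).
\]
At a hypothetical zero \(a\) of \(J_f\), Lemma~\ref{lem-pointwise-distortion} gives \(Df(a)=0\). The first nonzero Taylor term \(P_m\), with \(m\ge2\), then satisfies \(\Delta P_m=0\), because the first-order drift term contributes only terms of degree at least \(m-1\). Concretely, \(\Delta P_m\) has degree \(m-2\), while \(\rho^{-1}\nabla\rho(a)\cdot\nabla P_m\) has degree \(m-1\), so it cannot cancel \(\Delta P_m\).

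The rescalings \(f_r\) are Euclidean \(K\)-quasiregular and converge to \(P_m\). Lemma~\ref{lem-reshetnyak} then shows that \(P_m\) is quasiregular, which contradicts Theorem~\ref{thm-homogeneous-qr-obstruction}. The sign conclusion and the local diffeomorphism statement then follow from continuity and the inverse function theorem.

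The only delicate step is the degree comparison in the equation for \(P_m\). This point must be handled with care: the drift term \(\nabla\rho\) does not vanish at \(a\) unless \(a=0\), so the comparison should not be taken for granted. It works only because that term is of strictly higher homogeneity than \(\Delta P_m\), so it does not interfere with the harmonicity of the leading term.
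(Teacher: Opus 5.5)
Your proposal is correct and follows the paper's proof: the paper likewise deduces the statement from Corollary~\ref{cor-riemannian-lewy}, taking the hyperbolic metric on the domain and the Euclidean metric on the target. As an equivalent route, it notes that the hyperbolic Laplacian has Euclidean principal part, so the blow-up at a zero of the Jacobian is a Euclidean homogeneous harmonic quasiregular polynomial, which Theorem~\ref{thm-homogeneous-qr-obstruction} excludes. Your conformal-factor identities ($J_f^{g,h}=\rho^{-3}J_f$, $\|\mathrm{d}f\|_{g,h}=\rho^{-1}\|Df\|$) and the degree comparison ($m-2$ versus $m-1$) fill in details the paper leaves implicit.
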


\begin{proof}
This is Corollary~\ref{cor-riemannian-lewy} with domain metric equal to the
hyperbolic metric and target metric Euclidean. Equivalently, in local
coordinates the hyperbolic Laplacian has Euclidean principal part, and the
first nonzero homogeneous blow-up at a hypothetical zero of the Jacobian is a
Euclidean homogeneous harmonic polynomial mapping. The homogeneous quasiregular
obstruction applies as before.
\end{proof}

\begin{remark}
The real-analytic formulation is chosen to keep the statement directly
compatible with the elementary Taylor blow-up used throughout the paper. For
smooth non-analytic metrics one expects the same conclusion under suitable
finite-order vanishing and tangent-map results for elliptic systems, but that
extension is not needed for the present work.
\end{remark}

\subsection{Global rigidity consequences}

The local theorem has a useful global consequence at infinity: there are no
nonlinear entire quasiregular harmonic mappings in dimension three. The proof
has three ingredients. First, Theorem~\ref{thm-main} gives \(J_f>0\), and hence removes the branch set.
Second, Zorich's global homeomorphism theorem promotes the mapping to a
quasiconformal automorphism of \(\R^3\). Third, Mori's growth theorem gives
polynomial growth, so that the harmonic coordinate functions are actually
harmonic polynomials. The leading homogeneous term at infinity is then
excluded by Theorem~\ref{thm-homogeneous-qr-obstruction} unless it has degree
one.

\begin{lemma}
\label{lem-zorich}
Let \(n\ge3\), and let \(F:\R^n\to\R^n\) be a nonconstant quasiregular
mapping with empty branch set. Then \(F\) is a quasiconformal automorphism of
\(\R^n\).
\end{lemma}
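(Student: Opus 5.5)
This is Zorich's global homeomorphism theorem, and I would prove it by the standard path-lifting argument. First I show that \(F\) is a local homeomorphism. By Lemma~\ref{lem-reshetnyak}, \(F\) is open and discrete. Since its branch set is empty, every point has a neighbourhood on which \(F\) is injective. Quasiregularity is local, so \(F\) is then locally quasiconformal, with a uniform distortion constant \(K\).

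Second, I show that \(F\) has the path-lifting property, so that it is a covering map. Take a path \(\beta:[0,1]\to\R^n\) and a point \(x_0\in F^{-1}(\beta(0))\). Local homeomorphism gives a maximal lift \(\alpha:[0,t_0)\to\R^n\) starting at \(x_0\). If \(\alpha(t)\) stays in a compact set as \(t\to t_0\), then discreteness and local injectivity let the lift be continued past \(t_0\). The whole difficulty is to exclude lifts that escape to infinity while \(\beta\) stays bounded. Here I would use the modulus (capacity) inequality for quasiregular maps. Suppose the lift escapes. Choose a small ball \(B\) around a point of \(F(\alpha)\), and use local injectivity to lift neighbourhoods along \(\alpha\). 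This produces a family of curves in \(\R^n\) connecting a small fixed continuum to \(\infty\). By the \(K_O\)-inequality (Poletsky/V\"ais\"al\"a), its modulus is controlled by the modulus of an image family lying in a bounded region, which is positive and finite. On the other hand, the family contains curves tending to infinity, and in \(\R^n\) with \(n\ge3\) a family of curves connecting a nondegenerate continuum to \(\infty\) has a specific comparison with spherical rings. Carried out carefully (as in Zorich's original argument, or Rickman's monograph, Chapter III), this comparison shows that the injectivity radius of \(F\) grows at least linearly. That growth forces the lifts to stay bounded.

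Third, once \(F\) is a covering map of the simply connected space \(\R^n\), it is a homeomorphism onto its image. Its image is open and path-lifting makes it closed, so the image is all of \(\R^n\). Being a homeomorphism satisfying the distortion inequality, \(F\) is a quasiconformal automorphism of \(\R^n\). The hypothesis \(n\ge3\) is essential, since \(\exp\) shows the statement fails in the plane.

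The main obstacle is the second step, ruling out escaping lifts. This is genuinely Zorich's theorem, and in the paper I would cite it (Zorich; see \cite{Rickman}, Ch.~III) rather than reprove the modulus estimate. Concretely, I would invoke the form: a local homeomorphism \(\R^n\to\R^n\), \(n\ge3\), which is quasiregular, is a homeomorphism onto \(\R^n\). Our lemma is then immediate from the first step.
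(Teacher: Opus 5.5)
Your proposal is correct and, in the form you would actually write it, coincides with the paper's proof: Reshetnyak's theorem and the empty branch set make \(F\) a local homeomorphism, Zorich's global homeomorphism theorem (cited rather than reproved) makes \(F\) a homeomorphism onto \(\R^n\), and an injective quasiregular map is quasiconformal. Your sketch of the modulus argument behind Zorich's theorem is only heuristic, but like the paper you rely on the citation for that step, so nothing is lost.
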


\begin{proof}
By Reshetnyak's theorem, \(F\) is open and discrete. Since the branch set is
empty, \(F\) is a local homeomorphism. Zorich's global homeomorphism theorem
states that every locally homeomorphic quasiregular mapping
\[
        \R^n\to\R^n,\qquad n\ge3,
\]
is a homeomorphism of \(\R^n\), see \cite{Zorich1967} and also
\cite[Section II.6]{Rickman} or \cite[Section 17]{Vaisala}. Hence, \(F\) is a
homeomorphism of \(\R^n\) onto \(\R^n\). A one-to-one quasiregular mapping is
quasiconformal. Therefore, \(F\) is a quasiconformal automorphism of \(\R^n\).
\end{proof}

\begin{lemma}
\label{lem-polynomial-growth}
Let \(F:\R^3\to\R^3\) be a quasiconformal automorphism. Then there are
constants \(C<\infty\) and \(N<\infty\) such that
\[
        |F(x)|\le C(1+|x|)^N,
        \qquad
        x\in\R^3.
\]
\end{lemma}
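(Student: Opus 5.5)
The plan is to use the standard quasisymmetry (Hölder-type) property of quasiconformal automorphisms of \(\R^n\) at large scales, in the form of Mori-type distortion estimates. First normalize: replacing \(F\) by \(F-F(0)\) changes the bound only by an additive constant, so we may assume \(F(0)=0\). The basic input we invoke is that a \(K\)-quasiconformal map \(F:\R^3\to\R^3\) is \(\eta\)-quasisymmetric, with \(\eta\) depending only on \(K\) (see \cite{Vaisala,Rickman}). In particular, there is a constant \(H=\eta(1)<\infty\) such that
\[
        \max_{|x|=r}|F(x)|\le H\min_{|x|=r}|F(x)|,
        \qquad r>0.
\]

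The key step is a doubling-type estimate. Set \(L(r)=\max_{|x|=r}|F(x)|\). Quasisymmetry applied to the triple \(0,\ x,\ y\) with \(|x|=r\) and \(|y|=2r\) gives
\[
        \frac{|F(y)|}{|F(x)|}\le\eta\Bigl(\frac{|y|}{|x|}\Bigr)=\eta(2).
\]
Taking the maximum over \(y\) and any fixed \(x\) yields \(L(2r)\le \eta(2)L(r)\). Iterating from \(r=1\), for \(|x|\le 2^k\) we get \(|F(x)|\le L(2^k)\le \eta(2)^kL(1)\). Choosing \(k=\lceil\log_2|x|\rceil\) for \(|x|\ge1\) then gives
\[
        |F(x)|\le \eta(2)L(1)\,|x|^{N},
        \qquad N=\log_2\eta(2).
\]
Together with the trivial bound \(|F|\le L(1)\) on the unit ball, this gives \(|F(x)|\le C(1+|x|)^N\).

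The only nontrivial ingredient, and hence the main obstacle, is the quasisymmetry of global quasiconformal maps of \(\R^n\). This is a standard theorem of Gehring and Väisälä, proved via modulus of ring domains: compare the modulus of the ring separating \(\{0,x\}\) from \(\{y,\infty\}\) with that of its image, and use Teichmüller's ring estimate to bound the image ring. If one prefers to avoid quoting quasisymmetry directly, the plan is to prove \(L(2r)\le C_K\min_{|x|=r}|F(x)|\) and \(\min_{|x|=r}|F(x)|\le L(r)\) directly through this modulus argument, using the \(K\)-quasi-invariance of modulus under \(F\). Either route uses only the homeomorphism property and bounded distortion, so dimension three plays no special role; the harmonicity of \(F\) is used only afterwards, to upgrade polynomial growth to \(F\) being a polynomial.
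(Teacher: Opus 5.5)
Your proof is correct, but it takes a different route from the paper.

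\textbf{The paper's route.} The paper normalizes \(F(0)=0\) and extends \(F\) to a quasiconformal self-map of \(S^3=\R^3\cup\{\infty\}\) fixing \(\infty\). It then invokes Mori's theorem, equivalently the H\"older distortion estimate in the spherical metric, which in effect is applied to the inverse near \(\infty\). This gives \(|F(x)|\le C|x|^N\) for \(|x|\ge1\).

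\textbf{Your route.} You stay in \(\R^3\) and use the Gehring--V\"ais\"al\"a quasisymmetry of quasiconformal automorphisms. The three-point inequality with base point \(0\) gives the doubling bound \(L(2r)\le\eta(2)L(r)\). Iterating yields polynomial growth with the explicit exponent \(N=\log_2\eta(2)\), which depends only on the distortion. The constant depends on \(F\) only through \(L(1)\) and \(|F(0)|\).

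\textbf{Comparison.} Your argument avoids extending across the point at infinity and avoids spherical-metric estimates. It also makes the dependence of \(N\) on \(K\) transparent. The paper's version is shorter to state, but the same Teichm\"uller ring-modulus input is hidden inside the Mori/H\"older estimate.

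\textbf{Two small steps to make explicit.}
\begin{enumerate}
\item The bound \(|F(x)|\le L(2^k)\) for \(|x|\le 2^k\) needs a justification. One option: since \(F\) is a homeomorphism of \(\R^3\), it maps the closed ball onto a compact set whose boundary lies in \(F(\{|x|=2^k\})\), so the maximum of \(|F|\) is attained there. Alternatively, apply quasisymmetry once more at the cost of a factor \(\eta(1)\).
\item The estimate \(\eta(2)^k\le \eta(2)\,|x|^{\log_2\eta(2)}\) for \(k=\lceil\log_2|x|\rceil\) uses \(\eta(2)\ge\eta(1)\ge1\). This holds for any quasisymmetry function of an injective map: comparing two points equidistant from a base point in both orders gives \(\eta(1)^2\ge1\).
\end{enumerate}
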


\begin{proof}
After replacing \(F\) by \(F-F(0)\), assume \(F(0)=0\). A quasiconformal
automorphism of \(\R^3\) extends to a quasiconformal self-homeomorphism of
the one-point compactification
\[
        S^3=\R^3\cup\{\infty\},
\]
fixing \(\infty\). Mori's theorem, or equivalently the standard H\"older
distortion estimate for quasiconformal mappings in the spherical metric
\cite{Vaisala}, gives a power bound at \(\infty\). In Euclidean form this
yields, for \(|x|\ge1\),
\[
        |F(x)|\le C|x|^N,
\]
where \(N\) depends only on the distortion of \(F\). Enlarging \(C\) gives
the stated estimate on all of \(\R^3\).
\end{proof}

\begin{corollary}
\label{cor-entire-liouville}
Let \(f:\R^3\to\R^3\) be a nonconstant sense-preserving quasiregular harmonic
mapping. Then \(f\) is affine:
\[
        f(x)=Ax+b,
        \qquad
        \det A>0.
\]
In particular, after possibly reversing the orientation in the target, every
entire quasiregular harmonic mapping \(\R^3\to\R^3\) is affine.
\end{corollary}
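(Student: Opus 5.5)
The plan follows the three-step outline announced before Lemma~\ref{lem-zorich}.

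First, Theorem~\ref{thm-main} gives \(J_f>0\) everywhere on \(\R^3\). Since \(f\) is real analytic, the inverse function theorem makes \(f\) a local homeomorphism, so its branch set is empty. Lemma~\ref{lem-zorich} then shows that \(f\) is a quasiconformal automorphism of \(\R^3\). Lemma~\ref{lem-polynomial-growth} gives \(|f(x)|\le C(1+|x|)^N\).

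Second, each coordinate \(f^i\) is an entire harmonic function of polynomial growth, so it is a harmonic polynomial of degree at most \(N\). This is the classical Liouville theorem; it follows from the Cauchy-type estimates \(|D^k f^i(0)|\le C_k R^{-k}\sup_{B_R}|f^i|\) as \(R\to\infty\) for \(k>N\). Let \(m\) be the maximal degree among the coordinates, and write \(f=\sum_{k\le m}P_k\) with \(P_k\) homogeneous of degree \(k\) and \(P_m\ne0\). Each \(P_k\) is harmonic, because the Laplacian preserves homogeneous degree. Since \(f\) is nonconstant, \(m\ge1\).

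Third, I would blow down at infinity. Put \(f_R(x)=R^{-m}f(Rx)\). These maps are \(K\)-quasiregular, since domain dilations and positive target homotheties preserve the distortion inequality. They converge locally uniformly to \(P_m\), which is nonconstant. Lemma~\ref{lem-reshetnyak}(ii) then shows that \(P_m\) is \(K\)-quasiregular. Theorem~\ref{thm-homogeneous-qr-obstruction} forbids this when \(m>1\), so \(m=1\). Hence \(f(x)=Ax+b\) with \(b=P_0\), and \(J_f=\det A>0\) by Theorem~\ref{thm-main}. For the final sentence, a sense-reversing map is first composed with a reflection of the target, which preserves harmonicity and quasiregularity.

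The main obstacle is the growth step, Lemma~\ref{lem-polynomial-growth}. It depends on Zorich's theorem, which needs \(n\ge3\), and on the Hölder/Mori estimate at \(\infty\). Everything else is routine, and the degree obstruction is exactly Theorem~\ref{thm-homogeneous-qr-obstruction}. A smaller point to check is that the blow-down limit \(P_m\) is nonconstant, so that Reshetnyak compactness applies; this holds because \(P_m\ne0\) is homogeneous of degree \(m\ge1\).
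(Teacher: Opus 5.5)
Your proposal is correct and follows essentially the same route as the paper. Theorem~\ref{thm-main} together with Zorich's theorem makes \(f\) a quasiconformal automorphism, the growth lemma and the harmonic Liouville theorem make it a harmonic polynomial map, and the blow-down \(R^{-m}f(Rx)\to P_m\), via Reshetnyak compactness and Theorem~\ref{thm-homogeneous-qr-obstruction}, forces \(m=1\); the only cosmetic difference is that you read off \(\det A=J_f>0\) from Theorem~\ref{thm-main}, whereas the paper rules out a singular affine part with the distortion inequality.
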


\begin{proof}
By Theorem~\ref{thm-main}, \(J_f>0\) everywhere. Hence, \(f\) has empty
branch set. By Lemma~\ref{lem-zorich}, \(f\) is a quasiconformal
automorphism of \(\R^3\). Therefore, Lemma~\ref{lem-polynomial-growth} gives
polynomial growth:
\[
        |f(x)|\le C(1+|x|)^N.
\]
Each coordinate function of \(f\) is an entire harmonic function of
polynomial growth. By the classical Liouville theorem for harmonic
functions, each coordinate is a harmonic polynomial.

Let \(\ell\) be the degree of the resulting polynomial mapping and write
\[
        f(x)=P_\ell(x)+P_{\ell-1}(x)+\cdots+P_0,
\]
where \(P_j\) is homogeneous of degree \(j\), and \(P_\ell\not\equiv0\). Since
the coordinate functions of \(f\) are harmonic, each homogeneous term is
harmonic. If \(\ell>1\), then the blow-downs
\[
        f_R(x)=\frac{f(Rx)}{R^\ell}
\]
are quasiregular with the same distortion as \(f\), and \(f_R\to P_\ell\)
locally uniformly as \(R\to\infty\). Reshetnyak compactness therefore
implies that \(P_\ell\) is a nonconstant quasiregular homogeneous harmonic
polynomial mapping of degree \(\ell>1\). This contradicts
Theorem~\ref{thm-homogeneous-qr-obstruction}. Hence, \(\ell=1\), and
\(f(x)=Ax+b\).

Finally, the quasiregular distortion inequality excludes a singular
nonconstant affine part. Hence, \(\det A>0\) in the sense-preserving case.
\end{proof}

\begin{corollary}
Let \(F:\R^3\to\R^3\) be a nonconstant sense-preserving quasiregular mapping
whose coordinate functions are harmonic polynomials. Then \(F\) is affine.
\end{corollary}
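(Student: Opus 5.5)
This statement is a special case of Corollary~\ref{cor-entire-liouville}, so I would reduce to it, and I would also record a direct argument that avoids Zorich's theorem and Mori's estimate. The reduction is immediate. The coordinate functions of \(F\) are harmonic polynomials, so \(F\) is an entire harmonic mapping \(\R^3\to\R^3\). Being nonconstant, sense-preserving and quasiregular, it satisfies all hypotheses of Corollary~\ref{cor-entire-liouville}, which gives \(F(x)=Ax+b\) with \(\det A>0\).

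For the direct argument, write
\[
        F=P_\ell+P_{\ell-1}+\cdots+P_0,
\]
where each \(P_j\) is homogeneous of degree \(j\) and \(P_\ell\not\equiv0\). Since \(\Delta\) maps homogeneous polynomials of degree \(j\) to homogeneous polynomials of degree \(j-2\), comparing homogeneous parts of \(\Delta F^i=0\) shows that every \(P_j\) is componentwise harmonic. In particular \(P_\ell\) is a nonzero homogeneous harmonic polynomial mapping. If \(\ell=0\), then \(F\) is constant, which is excluded, so \(\ell\ge1\).

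Suppose \(\ell\ge2\), and consider the blow-downs
\[
        F_R(x)=\frac{F(Rx)}{R^\ell}.
\]
A domain dilation followed by a positive target homothety preserves the distortion inequality, so each \(F_R\) is \(K\)-quasiregular on \(\R^3\). Moreover
\[
        F_R-P_\ell=\sum_{j<\ell}R^{j-\ell}P_j,
\]
which tends to \(0\) uniformly on compact sets as \(R\to\infty\). The limit \(P_\ell\) is nonconstant because it is a nonzero homogeneous polynomial of positive degree. Lemma~\ref{lem-reshetnyak}(ii) therefore shows that \(P_\ell\) is quasiregular, and this contradicts Theorem~\ref{thm-homogeneous-qr-obstruction}. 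Hence \(\ell=1\) and \(F(x)=Ax+b\).

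Finally, \(\det A>0\): if \(\det A=0\), then \(J_F\equiv0\), and the distortion inequality forces \(A=0\), so \(F\) would be constant. Sense preservation then gives \(\det A>0\). None of these steps is a real obstacle. The only point to check is that the blow-down limit is nonconstant, and this holds because \(P_\ell\) is a nonzero homogeneous polynomial of degree \(\ell\ge1\).
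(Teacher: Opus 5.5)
Your direct argument is correct and is essentially the paper's proof. The paper also reuses the blow-down step from Corollary~\ref{cor-entire-liouville}: it splits \(F\) into harmonic homogeneous parts, blows down to \(P_\ell\), and applies Reshetnyak compactness and Theorem~\ref{thm-homogeneous-qr-obstruction}, without Zorich's theorem or the growth lemma. Your first route, reducing to Corollary~\ref{cor-entire-liouville}, is also valid but unnecessary here, since polynomial growth is automatic.
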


\begin{proof}
This is the last blow-down part of the proof of
Corollary~\ref{cor-entire-liouville}, without using Lemma~\ref{lem-zorich}
or Lemma~\ref{lem-polynomial-growth}.
\end{proof}

\begin{remark}
The quasiregular bounded-distortion assumption in
Corollary~\ref{cor-entire-liouville} is essential. There are nonlinear entire
harmonic diffeomorphisms of \(\R^3\) onto itself. For example,
\[
        F(x,y,z)=(x,y,z+x^2-y^2).
\]
Its coordinate functions are harmonic, since
\[
        \Delta x=0,
        \qquad
        \Delta y=0,
        \qquad
        \Delta(z+x^2-y^2)=0.
\]
Moreover,
\[
        DF=
        \begin{pmatrix}
        1 & 0 & 0\\
        0 & 1 & 0\\
        2x & -2y & 1
        \end{pmatrix},
        \qquad
        J_F\equiv1.
\]
Thus, \(F\) is a global real-analytic diffeomorphism, with inverse
\[
        F^{-1}(X,Y,Z)=(X,Y,Z-X^2+Y^2).
\]
However, \(F\) is not quasiregular, because
\(\norm{DF(x,y,z)}\to\infty\) as \(|(x,y)|\to\infty\), while \(J_F\equiv1\).
Hence, no constant \(K\) can satisfy
\[
        \norm{DF}^3\le KJ_F
\]
on all of \(\R^3\).
\end{remark}

\section{Sharpness and higher-dimensional limitations}
\label{sec-higher}

The dimension-three theorem suggests a natural homogeneous obstruction
problem in arbitrary dimension. Before passing to higher dimensions, we first
record a sharp borderline cubic in \(\R^3\). It shows that the positivity
assumption in Proposition~\ref{prop-no-positive-homogeneous-jacobian-R3} cannot
be relaxed to nonnegativity.

\subsection{A sharp borderline cubic in dimension three}

\begin{proof}[Proof of Theorem~\ref{thm-icosahedral-borderline}]
Define
\[
\begin{aligned}
        Q_1(x,y,z)&=x^3-3xy^2+3z(x^2-y^2),\\
        Q_2(x,y,z)&=6xyz-3x^2y+y^3,\\
        Q_3(x,y,z)&=z(2z^2-3x^2-3y^2),
\end{aligned}
\]
and put \(Q=(Q_1,Q_2,Q_3)\). These three components are homogeneous harmonic
cubics. Indeed, they are the standard cubic spherical harmonics
\[
        Y_1=x^3-3xy^2,\quad
        Y_2=3x^2y-y^3,\quad
        Y_5=z(x^2-y^2),\quad
        Y_6=2xyz,\quad
        Y_7=z(2z^2-3x^2-3y^2)
\]
combined as
\[
        Q=(Y_1+3Y_5,\,3Y_6-Y_2,\,Y_7).
\]

Let
\[
        v_0=(0,0,1),
\]
and, for \(k=1,\ldots,5\), let
\[
        v_k=\left(
        \frac2{\sqrt5}\cos\frac{2\pi(k-1)}5,
        \frac2{\sqrt5}\sin\frac{2\pi(k-1)}5,
        \frac1{\sqrt5}
        \right).
\]
The twelve points \(\pm v_0,\ldots,\pm v_5\) are the vertices of a regular
icosahedron; see Figure~\ref{fig:icosahedral-vertices}.

\begin{figure}[htbp]
\centering
\includegraphics[width=.56\textwidth]{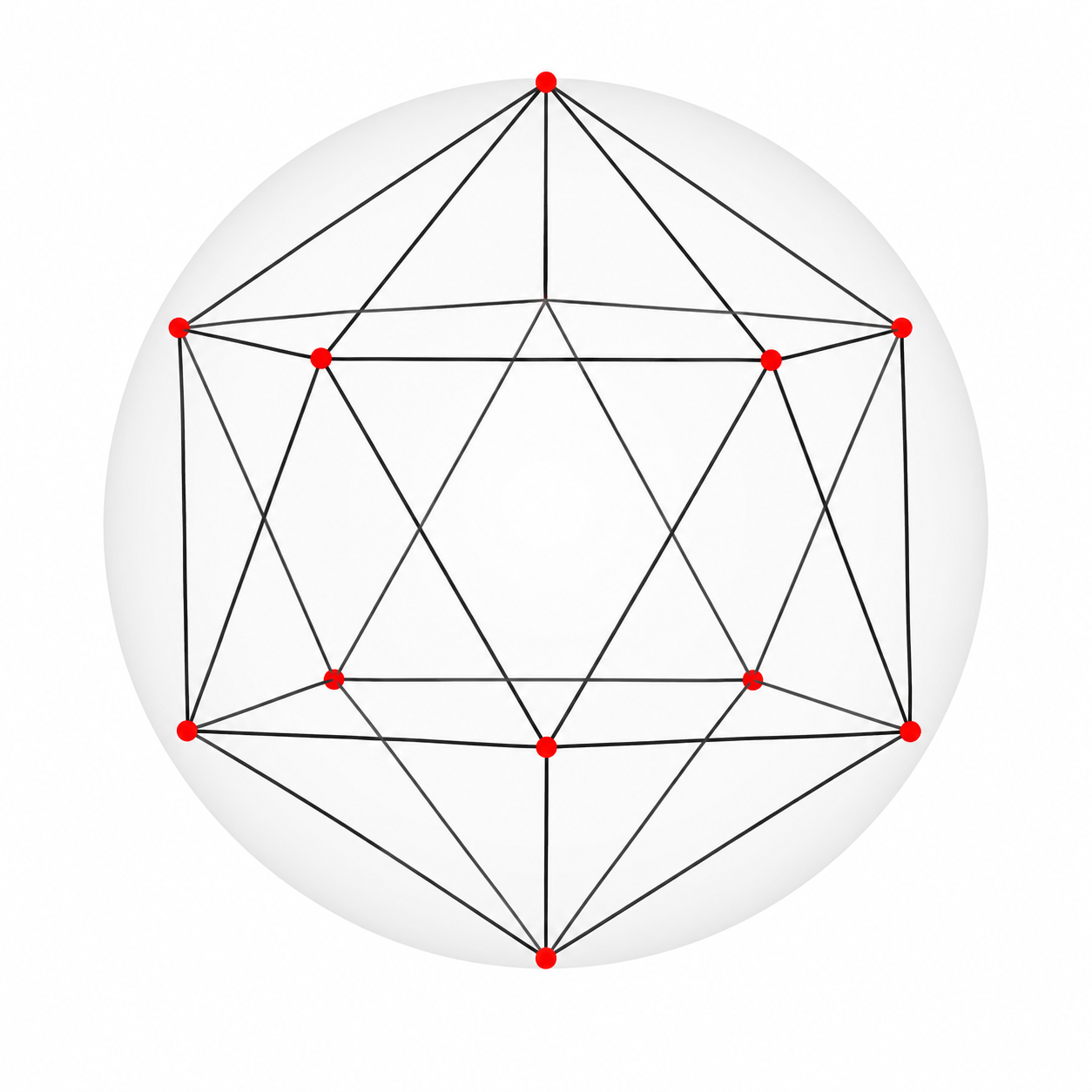}
\caption{The vertex configuration \(\{\pm v_0,\ldots,\pm v_5\}\) on the unit sphere used in the icosahedral cubic.}
\label{fig:icosahedral-vertices}
\end{figure}

A direct determinant computation gives the identity
\[
        J_Q(X)=\frac{225}{2}
        \left(
        \frac{26}{25}|X|^6-
        \sum_{k=0}^{5}(v_k\cdot X)^6
        \right),
        \qquad X\in\R^3.
\]
It remains to prove the elementary extremal inequality
\[
        \sum_{k=0}^{5}(v_k\cdot\theta)^6\le \frac{26}{25},
        \qquad \theta\in S^2,
\]
and to identify the equality cases.

Write
\[
        \theta=(r\cos\phi,r\sin\phi,t),
        \qquad r\ge0,
        \qquad r^2+t^2=1.
\]
Using the elementary trigonometric sums over the five fifth-roots of unity,
one obtains
\[
\begin{aligned}
        \sum_{k=0}^{5}(v_k\cdot\theta)^6
        =\frac2{25}\bigl(&10r^6+6r^5t\cos5\phi+45r^4t^2 \\
        &+15r^2t^4+13t^6\bigr).
\end{aligned}
\]
Put \(u=t^2\). Since \(|\cos5\phi|\le1\), the last display gives
\[
\begin{aligned}
        \frac{25}{2}
        \sum_{k=0}^{5}(v_k\cdot\theta)^6
        \le{}&10(1-u)^3+6(1-u)^{5/2}\sqrt u \\
        &+45(1-u)^2u+15(1-u)u^2+13u^3.
\end{aligned}
\]
Set
\[
        A(u)=10(1-u)^3+45(1-u)^2u+15(1-u)u^2+13u^3.
\]
Then
\[
        13-A(u)=3(1-u)(11u^2-4u+1)\ge0
        \qquad (0\le u\le1),
\]
and a direct simplification gives
\[
\begin{aligned}
        \bigl(13-A(u)\bigr)^2-36u(1-u)^5
        ={}&9(1-u)^2(5u-1)^2(5u^2-2u+1)\ge0.
\end{aligned}
\]
Therefore,
\[
        A(u)+6(1-u)^{5/2}\sqrt u\le13,
\]
and hence \(\sum_{k=0}^{5}(v_k\cdot\theta)^6\le26/25\). It follows that
\(J_Q\ge0\) on \(S^2\), and by homogeneity on all of \(\R^3\).

The equality conditions in the preceding inequalities are also explicit. One
gets either \(u=1\), which gives \(\theta=\pm v_0\), or \(u=1/5\) and
\(\cos5\phi=\operatorname{sgn}t\), which gives the remaining vertices
\(\pm v_1,\ldots,\pm v_5\). Thus, the zero set of \(J_Q\) on \(S^2\) is
exactly the icosahedral vertex set. Finally,
\[
        DQ(v_0)v_0=(0,0,6)\ne0,
        \qquad
        J_Q(v_0)=0.
\]
Hence, the quasiregular distortion inequality cannot hold at \(v_0\). Thus,
\(Q\) is not quasiregular.
\end{proof}

\begin{remark}
Theorem~\ref{thm-icosahedral-borderline} shows that the three-dimensional
minimum-principle obstruction is exactly a strict positivity obstruction. It proves that
one cannot hope to strengthen Proposition~\ref{prop-no-positive-homogeneous-jacobian-R3}
to say that every homogeneous harmonic Jacobian of degree \(m>1\) must change
sign on \(S^2\). The correct conclusion is that it cannot be strictly
positive or strictly negative. Nonnegative borderline cubics do exist, but at
their zeros the differential need not vanish, and bounded distortion fails.
\end{remark}

\subsection{Dimension-free reductions}

We record the dimension-free parts.

\begin{proposition}
Let \(n\ge2\), and let \(P:\R^n\to\R^n\) be a homogeneous harmonic
polynomial mapping of degree \(m\ge1\). If
\[
        J_P(\theta)\ne0,
        \qquad
        \theta\in S^{n-1},
\]
then \(P(\theta)\ne0\) on \(S^{n-1}\), and the radial normalization
\[
        \Phi(\theta)=\frac{P(\theta)}{|P(\theta)|}
\]
is a local diffeomorphism of \(S^{n-1}\). If \(n\ge3\), then \(\Phi\) is a
diffeomorphism and \(m\) is odd.
\end{proposition}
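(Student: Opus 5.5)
Three things have to be shown in turn: that \(P\) has no zeros on the sphere, that \(\Phi\) is a local diffeomorphism, and, for \(n\ge3\), that \(\Phi\) is a global diffeomorphism with \(m\) odd.

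\emph{Step 1: \(P\ne0\) on the sphere and \(\Phi\) is a local diffeomorphism.} Lemma~\ref{lem-spherical-jacobian} gives
\[
J_P(\theta)=m\det\bigl(\mathrm{d}P_\theta X_1,\ldots,\mathrm{d}P_\theta X_{n-1},P(\theta)\bigr)
\]
for an oriented orthonormal frame \(X_1,\ldots,X_{n-1}\) of \(T_\theta S^{n-1}\). Nonvanishing of \(J_P(\theta)\) therefore forces \(P(\theta)\ne0\), so \(\Phi\) is well defined and smooth.

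Next compute the differential of \(\Phi\). Put \(p=P(\theta)\) and let \(\pi_p\) be the orthogonal projection onto \(p^\perp=T_{\Phi(\theta)}S^{n-1}\). Then
\[
\mathrm{d}\Phi_\theta X=\frac{1}{|p|}\,\pi_p\bigl(\mathrm{d}P_\theta X\bigr).
\]
Suppose \(\mathrm{d}\Phi_\theta\) had a nonzero kernel vector \(X=\sum c_iX_i\). Then \(\mathrm{d}P_\theta X\) would be parallel to \(p\). This is a linear dependence among \(\mathrm{d}P_\theta X_1,\ldots,\mathrm{d}P_\theta X_{n-1},P(\theta)\), contradicting \(J_P(\theta)\ne0\). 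Since both tangent spaces have dimension \(n-1\), \(\mathrm{d}\Phi_\theta\) is an isomorphism. The inverse function theorem then shows that \(\Phi\) is a local diffeomorphism.

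\emph{Step 2: \(\Phi\) is a diffeomorphism when \(n\ge3\).} The map \(\Phi\) is a local diffeomorphism from a compact manifold to a connected manifold. Its image is open by the local diffeomorphism property and closed by compactness, so \(\Phi\) is surjective. A surjective local homeomorphism from a compact space is a finite-sheeted covering map. For \(n\ge3\) the sphere \(S^{n-1}\) is simply connected, so this covering is one-sheeted. Hence \(\Phi\) is a bijective local diffeomorphism, that is, a diffeomorphism. This covering argument is the only point where \(n\ge3\) is used; for \(n=2\) the map \(z\mapsto z^m\) shows the conclusion fails.

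\emph{Step 3: \(m\) is odd.} Homogeneity gives \(P(-\theta)=(-1)^mP(\theta)\). If \(m\) were even, then \(\Phi(-\theta)=\Phi(\theta)\) with \(-\theta\ne\theta\), contradicting the injectivity from Step 2. Hence \(m\) is odd.

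No step here is a serious obstacle. The only points needing care are checking that the kernel computation for \(\mathrm{d}\Phi\) matches exactly the determinant in Lemma~\ref{lem-spherical-jacobian}, and citing correctly the fact that a local homeomorphism between compact manifolds is a covering map.
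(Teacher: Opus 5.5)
Your proof is correct and follows essentially the same route as the paper. Nonvanishing of the spherical determinant from Lemma~\ref{lem-spherical-jacobian} excludes zeros of \(P\) and makes \(\mathrm{d}\Phi_\theta\) injective; your explicit formula \(\mathrm{d}\Phi_\theta X=|p|^{-1}\pi_p(\mathrm{d}P_\theta X)\) just spells out the paper's remark that radial projection kills only the radial direction. After that, the open–closed and covering argument on the simply connected sphere gives a diffeomorphism for \(n\ge3\), and for even \(m\) the identity \(\Phi(-\theta)=\Phi(\theta)\) contradicts injectivity, exactly as in the paper.
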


\begin{proof}
If \(P(\theta)=0\) at some point of the sphere, Euler's identity would make
the radial column \(DP(\theta)\theta=mP(\theta)\) vanish, and hence
\(J_P(\theta)=0\), a contradiction. Thus, \(P\ne0\) on the sphere.

Let \(X_1,\ldots,X_{n-1}\) be an oriented orthonormal frame at \(\theta\). By
Lemma~\ref{lem-spherical-jacobian}, the vectors
\[
        P(\theta),\ \mathrm{d}P_\theta X_1,\ldots,\mathrm{d}P_\theta X_{n-1}
\]
span \(\R^n\). Radial projection from \(\R^n\setminus\{0\}\) onto the sphere
kills only the radial direction. Hence, the derivatives of \(\Phi\) span
\(T_{\Phi(\theta)}S^{n-1}\), and \(\Phi\) is a local diffeomorphism.

For \(n\ge3\), the sphere \(S^{n-1}\) is compact, connected, and simply
connected. The image of a local diffeomorphism from a compact manifold is
open and closed, so \(\Phi\) is onto. It is therefore a covering map of
\(S^{n-1}\). Since \(S^{n-1}\) is simply connected, the covering has one
sheet, and \(\Phi\) is a diffeomorphism. If \(m\) were even, then
\(P(-\theta)=P(\theta)\), so \(\Phi(-\theta)=\Phi(\theta)\), contradicting
injectivity. Hence, \(m\) is odd.
\end{proof}

The next reformulation uses only the standard relation between homogeneous
harmonic polynomials and spherical harmonics, as presented for instance in
\cite{AxlerBourdonRamey}. We shall also use the usual first-jet terminology:
the first jet of a smooth function at a point consists of its value and first
differential, see, for example, \cite{Saunders} for the jet-bundle language.

\begin{proposition}
Let \(m\ge1\), and let \(\mathcal H_m(S^{n-1})\) denote the space of
restrictions to the sphere of homogeneous harmonic polynomials of degree
\(m\) on \(\R^n\). There exists a homogeneous harmonic polynomial mapping
\(P:\R^n\to\R^n\) of degree \(m\) with \(J_P\ne0\) on \(S^{n-1}\) if and only
if there exists an \(n\)-dimensional subspace
\(W\subset\mathcal H_m(S^{n-1})\) such that the first-jet evaluation map
\[
        j^1_\theta:W\longrightarrow\R\oplus T^*_\theta S^{n-1},
        \qquad
        j^1_\theta u=(u(\theta),\mathrm{d}u_\theta),
\]
is an isomorphism for every \(\theta\in S^{n-1}\). Here \(j^1_\theta u\)
denotes the first jet of \(u\) at \(\theta\), namely its value and first
differential at that point.
\end{proposition}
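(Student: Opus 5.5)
The plan is to translate the Jacobian condition into a statement about first jets, using Lemma~\ref{lem-spherical-jacobian}, and then to identify the maps \(P\) with ordered bases of \(n\)-dimensional subspaces of \(\mathcal H_m(S^{n-1})\). Recall that a homogeneous harmonic polynomial of degree \(m\) is determined by its restriction to \(S^{n-1}\), since \(p(r\theta)=r^mp(\theta)\). Hence a homogeneous harmonic polynomial mapping \(P=(P^1,\ldots,P^n)\) is the same as an \(n\)-tuple \((u_1,\ldots,u_n)\) of elements of \(\mathcal H_m(S^{n-1})\), where \(u_i=P^i|_{S^{n-1}}\).

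Fix \(\theta\in S^{n-1}\) and an oriented orthonormal frame \(X_1,\ldots,X_{n-1}\) of \(T_\theta S^{n-1}\). By \eqref{spherical-jacobian}, \(J_P(\theta)\neq0\) if and only if the \(n\times n\) matrix \(M(\theta)\) with columns \(P(\theta),\mathrm{d}P_\theta X_1,\ldots,\mathrm{d}P_\theta X_{n-1}\) is invertible. Its \(i\)-th row is \((u_i(\theta),\mathrm{d}u_{i,\theta}X_1,\ldots,\mathrm{d}u_{i,\theta}X_{n-1})\), which is \(j^1_\theta u_i\) written in the basis of \(\R\oplus T^*_\theta S^{n-1}\) dual to \((1,X_1,\ldots,X_{n-1})\). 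Here \(\mathrm{d}P_\theta X_\alpha\) is the derivative of \(P|_{S^{n-1}}\) along \(X_\alpha\), which is legitimate because \(X_\alpha\) is tangent to the sphere. So \(M(\theta)\) invertible means exactly that the jets \(j^1_\theta u_1,\ldots,j^1_\theta u_n\) are linearly independent in the \(n\)-dimensional space \(\R\oplus T^*_\theta S^{n-1}\), that is, they form a basis of it.

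For the forward direction, suppose \(J_P\neq0\) on \(S^{n-1}\). The jets \(j^1_\theta u_i\) are independent at any point, so the \(u_i\) themselves are linearly independent: a relation \(\sum c_iu_i=0\) would give \(\sum c_i j^1_\theta u_i=0\). Put \(W=\spanop\{u_1,\ldots,u_n\}\); then \(\dim W=n\). The map \(j^1_\theta|_W\) is linear and sends a basis of \(W\) to a basis of \(\R\oplus T^*_\theta S^{n-1}\), so it is an isomorphism for every \(\theta\).

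For the converse, suppose such a \(W\) exists. Choose a basis \(u_1,\ldots,u_n\) of \(W\), extend each \(u_i\) homogeneously of degree \(m\) to a harmonic polynomial \(P^i\), and set \(P=(P^1,\ldots,P^n)\). At each \(\theta\), the isomorphism \(j^1_\theta|_W\) maps this basis to a basis, so \(M(\theta)\) is invertible and \(J_P(\theta)\neq0\).

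No step is a real obstacle; the main care needed is in identifying the rows of \(M(\theta)\) with the coordinates of the first jets, that is, keeping track of which index is the component and which is the direction of differentiation.
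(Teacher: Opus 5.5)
Your proposal is correct and follows essentially the same route as the paper. Both proofs identify the matrix of \(j^1_\theta\), in the basis \(P^1|_{S^{n-1}},\ldots,P^n|_{S^{n-1}}\), with the transpose of the matrix whose columns are \(P(\theta),\mathrm{d}P_\theta X_1,\ldots,\mathrm{d}P_\theta X_{n-1}\), and then apply Lemma~\ref{lem-spherical-jacobian}; your derivation of \(\dim W=n\) from the linearity of \(j^1_\theta\) is a compact form of the paper's argument that a relation \(a\cdot P\equiv0\) on the sphere would place all these vectors in the hyperplane \(a^\perp\).
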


\begin{proof}
We first assume that such a homogeneous harmonic polynomial mapping
\[
        P=(P^1,\ldots,P^n):\R^n\to\R^n
\]
is given and that
\[
        J_P(\theta)\ne0,
        \qquad
        \theta\in S^{n-1}.
\]
Set
\[
        W=\spanop\{P^1|_{S^{n-1}},\ldots,P^n|_{S^{n-1}}\}
        \subset \mathcal H_m(S^{n-1}).
\]
We first note that \(\dim W=n\). Indeed, if the restrictions of the
components were linearly dependent, then there would exist a nonzero vector
\(a=(a_1,\ldots,a_n)\in\R^n\) such that
\[
        a\cdot P(\theta)=0,
        \qquad
        \theta\in S^{n-1}.
\]
Thus, \(P(S^{n-1})\subset a^\perp\). Differentiating along the sphere also
gives
\[
        a\cdot \mathrm{d}P_\theta X=0,
        \qquad
        X\in T_\theta S^{n-1}.
\]
Hence, for every \(\theta\in S^{n-1}\), all the vectors
\[
        P(\theta),\ \mathrm{d}P_\theta X_1,\ldots,\mathrm{d}P_\theta X_{n-1}
\]
belong to the hyperplane \(a^\perp\), and therefore cannot span \(\R^n\).
This contradicts Lemma~\ref{lem-spherical-jacobian}, since \(J_P(\theta)\ne0\).
Thus, \(\dim W=n\).

Fix \(\theta\in S^{n-1}\), and let
\[
        X_1,\ldots,X_{n-1}
\]
be a basis of \(T_\theta S^{n-1}\). With respect to the basis
\[
        P^1|_{S^{n-1}},\ldots,P^n|_{S^{n-1}}
\]
of \(W\), and the basis of \(\R\oplus T^*_\theta S^{n-1}\) given by
evaluation and differentiation in the directions \(X_1,\ldots,X_{n-1}\), the
matrix of \(j^1_\theta\) is
\[
        M(\theta)=
        \begin{pmatrix}
        P^1(\theta) & P^2(\theta) & \cdots & P^n(\theta)\\
        \mathrm{d}P^1_\theta X_1 & \mathrm{d}P^2_\theta X_1 & \cdots & \mathrm{d}P^n_\theta X_1\\
        \vdots & \vdots & \ddots & \vdots\\
        \mathrm{d}P^1_\theta X_{n-1} & \mathrm{d}P^2_\theta X_{n-1}
        & \cdots & \mathrm{d}P^n_\theta X_{n-1}
        \end{pmatrix}.
\]
This is the transpose of the matrix whose columns are
\[
        P(\theta),\ \mathrm{d}P_\theta X_1,\ldots,\mathrm{d}P_\theta X_{n-1}.
\]
Therefore, \(M(\theta)\) is invertible if and only if these \(n\) vectors span
\(\R^n\). By Lemma~\ref{lem-spherical-jacobian}, this is equivalent to
\[
        J_P(\theta)\ne0.
\]
Hence, \(j^1_\theta:W\to\R\oplus T^*_\theta S^{n-1}\) is an isomorphism for
every \(\theta\in S^{n-1}\).

Conversely, suppose that \(W\subset\mathcal H_m(S^{n-1})\) is an
\(n\)-dimensional subspace such that
\[
        j^1_\theta:W\to\R\oplus T^*_\theta S^{n-1}
\]
is an isomorphism for every \(\theta\in S^{n-1}\). Choose a basis
\[
        u_1,\ldots,u_n
\]
of \(W\). By the definition of \(\mathcal H_m(S^{n-1})\), each \(u_i\) is the
restriction to \(S^{n-1}\) of a unique homogeneous harmonic polynomial
\(U_i\) of degree \(m\) on \(\R^n\). Define
\[
        P=(U_1,\ldots,U_n):\R^n\to\R^n.
\]
Then \(P\) is a homogeneous harmonic polynomial mapping of degree \(m\).

Fix \(\theta\in S^{n-1}\) and a basis
\(X_1,\ldots,X_{n-1}\) of \(T_\theta S^{n-1}\). The matrix of \(j^1_\theta\)
in the basis \(u_1,\ldots,u_n\) is again the transpose of the matrix whose
columns are
\[
        P(\theta),\ \mathrm{d}P_\theta X_1,\ldots,\mathrm{d}P_\theta X_{n-1}.
\]
Since \(j^1_\theta\) is an isomorphism, this matrix is invertible. Therefore,
the vectors
\[
        P(\theta),\ \mathrm{d}P_\theta X_1,\ldots,\mathrm{d}P_\theta X_{n-1}
\]
span \(\R^n\). By Lemma~\ref{lem-spherical-jacobian}, we obtain
\[
        J_P(\theta)\ne0.
\]
Since \(\theta\in S^{n-1}\) was arbitrary, \(J_P\ne0\) on \(S^{n-1}\).
\end{proof}

\begin{remark}
The trace identity of Proposition~\ref{prop-trace-formula} is
dimension-free, but its sign content is special to tangent dimension two. At
a positive local minimum of \(J_P|_{S^{n-1}}\), the first-derivative equations
give \(T_\gamma=0\). When \(n=3\), the spherical harmonic trace constraints
then force \(Q(H)\ge0\), and therefore \(\Delta J_P<0\), a contradiction. For
\(n\ge4\), the corresponding quadratic form is indefinite on the formal
second-jet constraints. For example, in a three-dimensional tangent block one
may take
\[
        H^1_{23}=H^1_{32}=t,
        \qquad
        H^2_{13}=H^2_{31}=-t,
\]
with the other tangent-component second derivatives equal to zero, obtaining
\[
        Q(H)=-2t^2.
\]
Thus, the present minimum-principle calculation is exactly a
three-dimensional obstruction, even though the blow-up and first-jet
reductions are valid in every dimension.
\end{remark}

\subsection{Equivariant cubic models in higher dimensions}

The preceding remarks leave open the full higher-dimensional strict problem.
Nevertheless, the natural rotationally equivariant cubic ansatz can be treated
completely. The result below strengthens the borderline construction: it
classifies all \(O(d)\)-equivariant harmonic cubics on \(\R^d\times\R\), shows
exactly when their Jacobians are one-signed, and proves that this whole class
contains no strict positive-Jacobian counterexample.

\begin{theorem}
\label{thm-equivariant-cubic-classification}
Let \(d\ge2\), put \(n=d+1\), and write \(x=(y,t)\in\R^d\times\R\). Every
\(O(d)\)-equivariant homogeneous cubic polynomial mapping
\(P:\R^{d+1}\to\R^{d+1}\) has the form
\[
        P(y,t)=\bigl((a t^2+b|y|^2)y,\,c t^3+e t|y|^2\bigr)
\]
for some real constants \(a,b,c,e\). Such a map is harmonic if and only if
\[
        a+(d+2)b=0,
        \qquad
        3c+de=0.
\]
Consequently, every \(O(d)\)-equivariant homogeneous harmonic cubic is of the
form
\[
        P_{a,c}(y,t)=
        \left(
        a\left(t^2-\frac{|y|^2}{d+2}\right)y,
        c\left(t^3-\frac3d\,t|y|^2\right)
        \right).
\]
Its Jacobian is
\[
\begin{aligned}
        J_{P_{a,c}}(y,t)
        ={}&a^d c
        \left(t^2-\frac{|y|^2}{d+2}\right)^{d-1}  \\
        &\times
        \frac{3\bigl(d(d+2)t^4+6|y|^2t^2+3|y|^4\bigr)}{d(d+2)}.
\end{aligned}
\]
The final factor is strictly positive away from the origin. Hence, no
nontrivial map in this equivariant harmonic cubic class satisfies
\(J_P>0\) on \(S^d\). More precisely:
\begin{enumerate}[label=(\roman*)]
\item if \(n=d+1\) is odd and \(ac\ne0\), then \(J_{P_{a,c}}\) changes sign
on \(S^d\),
\item if \(n=d+1\) is even and \(a^d c>0\), then \(J_{P_{a,c}}\ge0\) on
\(\R^n\), but its zero set on \(S^d\) consists of the two latitude spheres
\[
        |y|^2=(d+2)t^2,
        \qquad |y|^2+t^2=1,
\]
\item every nontrivial member of this equivariant class either has
identically zero Jacobian or has a nonzero differential at some point where
its Jacobian vanishes. In particular, none is quasiregular, and none is
one-to-one when \(d\ge2\) and \(ac\ne0\).
\end{enumerate}
\end{theorem}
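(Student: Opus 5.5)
The plan is to first classify the equivariant cubics, then compute the Jacobian by separating radial and tangential directions in \(y\), and finally read off (i)--(iii) from the sign of the middle factor.

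\textbf{Classification.} Equivariance \(P(Ry,t)=(RP_1(y,t),P_2(y,t))\) for all \(R\in O(d)\) forces \(P_2(\cdot,t)\) to be \(O(d)\)-invariant. An invariant polynomial is a polynomial in \(|y|^2\), so the cubic degree gives \(P_2=ct^3+et|y|^2\). For \(P_1\), equivariance under the reflection \(y\mapsto -y\) makes \(P_1\) odd in \(y\). The stabilizer \(O(d-1)\) of a vector \(y\) must fix \(P_1(y,t)\); for \(d\ge2\) this forces \(P_1(y,t)\) to be a multiple of \(y\), namely \(P_1=g(|y|^2,t)y\) with \(g\) quadratic. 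Oddness in \(y\) together with the degree then give \(g=at^2+b|y|^2\). For \(d=2\) I would double check that the rotation \(J\) is excluded by reflections, since \(O(2)\) contains reflections.

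\textbf{Harmonicity.} We have \(\Delta(|y|^2y_i)=(2d+4)y_i\) and \(\Delta(t^2y_i)=2y_i\), so \(\Delta P_1=0\) iff \(2a+2(d+2)b=0\). Similarly \(\Delta(t^3)=6t\) and \(\Delta(t|y|^2)=2dt\), so \(\Delta P_2=0\) iff \(6c+2de=0\). This yields \(P_{a,c}\).

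\textbf{Jacobian.} Write \(g=a(t^2-|y|^2/(d+2))\). At a point with \(y=r\omega\), use the orthogonal splitting \(\R^d=\R\omega\oplus\omega^\perp\). On \(\omega^\perp\) the differential acts by the scalar \(g\), which gives the factor \(g^{d-1}\). What remains is the \(2\times2\) block in the variables \((r,t)\) for the map
\[
(r,t)\mapsto\bigl(g\,r,\; c(t^3-\tfrac3d tr^2)\bigr).
\]
I would compute this determinant directly:
\[
\partial_r(gr)=a\Bigl(t^2-\tfrac{3r^2}{d+2}\Bigr),\quad \partial_t(gr)=2art,\quad \partial_r P_2=-\tfrac{6c}{d}tr,\quad \partial_tP_2=c\Bigl(3t^2-\tfrac3dr^2\Bigr).
\]
The determinant is then
\[
ac\Bigl[\bigl(t^2-\tfrac{3r^2}{d+2}\bigr)\bigl(3t^2-\tfrac{3r^2}{d}\bigr)+\tfrac{12}{d}t^2r^2\Bigr],
\]
which should expand to the stated quartic. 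Expanding the bracket gives
\[
3t^4+t^2r^2\Bigl(-\tfrac3d-\tfrac9{d+2}+\tfrac{12}{d}\Bigr)+\tfrac{9r^4}{d(d+2)},
\]
and the middle coefficient is \(\tfrac{18}{d(d+2)}\), which matches. The quartic \(d(d+2)t^4+6r^2t^2+3r^4\) is a sum of positive terms, so it vanishes only at the origin.

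\textbf{Consequences.} The sign of \(J\) is determined by \(a^dc\) times \(h^{d-1}\), where \(h=t^2-|y|^2/(d+2)\) takes both signs on \(S^d\).
\begin{enumerate}[label=(\roman*)]
\item If \(n\) is odd, then \(d-1\) is odd, and \(J\) changes sign whenever \(ac\ne0\).
\item If \(n\) is even, then \(d-1\) is odd... wait, \(d\) is odd, so \(d-1\) is even and \(a^d\) has the sign of \(a\). Hence \(a^dc>0\) gives \(J\ge0\), with zeros exactly on the latitude spheres \(h=0\).
\item If \(ac=0\), then either a component vanishes or \(J\equiv0\); the degenerate cases need a brief check. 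If \(ac\ne0\), take a point on \(h=0\) with \(t\ne0\), for instance \(t^2=1/(d+3)\). There \(\partial_tP_2\) is a nonzero multiple of \(3t^2-3r^2/d\), which equals \(3t^2(1-(d+2)/d)\ne0\), so \(DP\ne0\) at a zero of \(J\). By Lemma~\ref{lem-pointwise-distortion}, \(P\) is not quasiregular.
\end{enumerate}

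For non-injectivity, apply the earlier dimension-free proposition and Theorem~\ref{thm-homogeneous-noninjectivity}-type reasoning. An injective homogeneous map has \(P^{-1}(0)=\{0\}\) and radial normalization a homeomorphism of \(S^d\). I would instead argue directly: \(P_1=0\) on the cone \(h=0\), so \(P\) maps that cone into the \(t\)-axis. That cone is a \(d\)-dimensional set, and for \(d\ge2\) it cannot inject into a line, since its link is connected with more than two points.

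The main obstacle is keeping the case analysis in (iii) honest when \(a=0\) or \(c=0\). If \(a=0\), the first component vanishes and \(J\equiv0\); if \(c=0\), likewise. So "nontrivial" together with \(ac=0\) gives \(J\equiv0\), which settles those cases.
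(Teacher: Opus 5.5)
Your proposal follows the paper's proof essentially step for step. It uses the same description of invariant and equivariant cubics and the same Laplacian computations. It uses the same splitting $\R^d=\R\omega\oplus\omega^\perp$, giving $g^{d-1}$ times the identical $2\times2$ radial--$t$ determinant. It also uses the same cone $|y|^2=(d+2)t^2$, where the Jacobian vanishes but $DP\neq0$, to obtain (i)--(iii).

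One justification needs repair, in the non-injectivity step. The link of that cone in $S^d$ is not connected: it is the two latitude spheres $t=\pm1/\sqrt{d+3}$. Moreover, ``connected with more than two points'' would not rule out an injection into a line anyway, since an arc injects into $\R$.

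The direct fix is the paper's argument. On the cone, $P_2=c\bigl(t^3-\tfrac3d t|y|^2\bigr)$ depends only on $t$. So for fixed $t\neq0$ the entire $(d-1)$-sphere $\{|y|=\sqrt{d+2}\,|t|\}$, which has more than one point since $d\ge2$, is mapped to the single point $\bigl(0,-\tfrac{2c(d+3)}{d}t^3\bigr)$. Alternatively, you can keep your topological route, but restrict to one nappe and use that $S^{d-1}$ with $d\ge2$ has no cut points.

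A smaller remark on the classification: oddness in $y$ does not exclude a term like $t^4/|y|^2$ in $\langle P_1,y\rangle/|y|^2$, since $t^4y/|y|^2$ is odd in $y$. It is polynomiality that excludes it, and this is a one-line check.
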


\begin{proof}
Let \(P=(V,W)\), where \(V:\R^d\times\R\to\R^d\) and
\(W:\R^d\times\R\to\R\). The \(O(d)\)-equivariance condition means
\[
        V(Ry,t)=RV(y,t),
        \qquad
        W(Ry,t)=W(y,t),
        \qquad R\in O(d).
\]
Since \(P\) is a homogeneous cubic polynomial, the invariant scalar component
must be a linear combination of \(t^3\) and \(t|y|^2\), and the equivariant
vector component must be \(y\) multiplied by an invariant homogeneous
quadratic. Hence,
\[
        P(y,t)=\bigl((a t^2+b|y|^2)y,\,c t^3+e t|y|^2\bigr).
\]

For \(1\le i\le d\),
\[
        \Delta(y_i t^2)=2y_i,
        \qquad
        \Delta(y_i|y|^2)=2(d+2)y_i.
\]
Thus, the vector component is harmonic if and only if \(a+(d+2)b=0\). For the
scalar component,
\[
        \Delta(t^3)=6t,
        \qquad
        \Delta(t|y|^2)=2dt,
\]
so harmonicity is equivalent to \(3c+de=0\). This proves the classification
and gives the displayed form of \(P_{a,c}\).

Put \(r=|y|\),
\[
        A(y,t)=a\left(t^2-\frac{r^2}{d+2}\right),
        \qquad
        B(y,t)=c\left(t^3-\frac3d\,tr^2\right).
\]
Then \(P_{a,c}(y,t)=(A(y,t)y,B(y,t))\). By \(O(d)\)-equivariance, at a point
with \(r>0\) the derivative has \(d-1\) tangential eigenvalues equal to
\(A\). On the two-dimensional plane spanned by the radial \(y\)-direction and
the \(t\)-axis, the derivative is represented by
\[
        \begin{pmatrix}
        A+rA_r & rA_t\\
        B_r & B_t
        \end{pmatrix}.
\]
For the normalized functions
\[
        A_0=t^2-\frac{r^2}{d+2},
        \qquad
        B_0=t^3-\frac3d\,tr^2,
\]
a direct calculation gives
\[
        (A_0+r(A_0)_r)(B_0)_t-r(A_0)_t(B_0)_r
        =
        \frac{3\bigl(d(d+2)t^4+6r^2t^2+3r^4\bigr)}{d(d+2)}.
\]
Multiplying by \(a c\) for the radial--\(t\) block and by the \(d-1\)
tangential eigenvalues \(aA_0\) gives the stated Jacobian formula. The
factor
\[
        d(d+2)t^4+6r^2t^2+3r^4
\]
is positive away from the origin.

The sign conclusions now follow from the single factor
\[
        \left(t^2-\frac{|y|^2}{d+2}\right)^{d-1}.
\]
If \(d\) is even, then \(d-1\) is odd, and this factor changes sign across
the cone \(|y|^2=(d+2)t^2\). If \(d\) is odd, then \(d-1\) is even, so the
Jacobian is one-signed whenever \(a^dc\) has a fixed sign, but it vanishes on
that cone. Intersecting the cone with \(S^d\) gives
\[
        t^2=\frac1{d+3},
        \qquad
        |y|^2=\frac{d+2}{d+3}.
\]

At points on this cone, the tangential eigenvalues vanish, but the radial--\(t\)
block has nonzero determinant if \(ac\ne0\). Hence, \(DP_{a,c}\ne0\) while
\(J_{P_{a,c}}=0\), so the quasiregular distortion inequality cannot hold.
If \(a=0\) or \(c=0\), the Jacobian is identically zero and the map is again
not a nonconstant quasiregular mapping. Finally, when \(ac\ne0\), the cone is
collapsed along the angular \(S^{d-1}\)-directions: for fixed \(t\ne0\) and
\(|y|=\sqrt{d+2}\,|t|\), one has
\[
        P_{a,c}(y,t)
        =
        \left(0,
        -\frac{2c(d+3)}d t^3
        \right),
\]
independently of the direction of \(y\). Since \(d\ge2\), this proves
non-injectivity.
\end{proof}

\begin{remark}
Theorem~\ref{thm-equivariant-cubic-classification} is a genuine
higher-dimensional nonexistence statement, but only in a symmetric class. It
shows that the most natural \(O(n-1)\)-equivariant cubic models cannot produce
a strict counterexample with \(J>0\) on the sphere. At the same time, when
\(n\) is even it gives nonnegative borderline Jacobians. Thus, the theorem
simultaneously strengthens the sharpness examples and explains why perturbing
the equivariant model is the first place where a strict higher-dimensional
counterexample would have to break symmetry.
\end{remark}

\begin{remark}
The classification shows that the dimension-three sign-change phenomenon does
not extend naively to higher dimensions. In even dimensions \(n\ge4\), there
are nontrivial homogeneous harmonic cubics with \(J\ge0\) on the sphere. What
remains open is the unrestricted strict problem: whether, for some \(n\ge4\),
there exists a homogeneous harmonic polynomial mapping \(P:\R^n\to\R^n\) of
degree \(m>1\) such that
\[
        J_P>0
        \qquad\text{on }S^{n-1}.
\]
\end{remark}

\begin{remark}
The planar case \(n=2\) shows that no dimension-free obstruction can hold
without qualification: the map \(z\mapsto z^m\) is homogeneous, harmonic,
and quasiregular, with a branch point at the origin.
\end{remark}

\medskip

\noindent\textbf{Acknowledgements.}
The authors thank Kai Rajala for pointing out the smooth quasiregular
local-invertibility theorem and for helpful comments on the formulation of the
main result.

\medskip

\noindent\textbf{Funding.} The first author gratefully acknowledges financial support from the Ministry of Education, Science and Innovation of Montenegro through the grants \emph{``Mathematical Analysis, Optimisation and Machine Learning''} and \emph{``Complex-analytic and geometric techniques for non-Euclidean machine learning: theory and applications.''} The second author was supported by NSF of China (No. 12271189), NSF of Guangdong Province (Grant No. 2024A1515010467, 2026A1515012333), STU Scientific Research Initiation Grant NTF25017T, HQU teaching reform project HQJGKT2411, and Fujian Alliance of Mathematics (Grant No. 2023SXLMMS07).

\medskip

\noindent\textbf{Data availability.} No datasets were generated or analyzed during the current study. Figure~\ref{fig:icosahedral-vertices} is an illustrative visualization of the icosahedral vertex configuration defined explicitly in Section~6.1. All information needed to reproduce the figure is contained in the manuscript.

\medskip

\noindent\textbf{Conflicts of interest.} The authors declare that they have no conflicts of interest regarding the publication of this paper.

\medskip

\noindent\textbf{Declaration of generative AI use.}
During the preparation of this work, the authors used OpenAI's ChatGPT to assist with language editing, exposition, and LaTeX formatting. After using this tool, the authors reviewed and edited the content as needed, verified the mathematical arguments and references, and take full responsibility for the content of the publication.


\begin{thebibliography}{99}

\bibitem{AstalaIwaniecMartin}
K. Astala, T. Iwaniec and G. Martin,
\emph{Elliptic Partial Differential Equations and Quasiconformal Mappings in
the Plane},
Princeton Mathematical Series, vol. 48, Princeton University Press,
Princeton, NJ, 2009.

\bibitem{AstalaManojlovic}
K. Astala and V. Manojlovic,
On Pavlovic's theorem in space,
\emph{Potential Anal.} \textbf{43} (2015), no. 3, 361--370.

\bibitem{BonkHeinonen}
M. Bonk and J. Heinonen,
Smooth quasiregular mappings with branching,
\emph{Publ. Math. Inst. Hautes \"Etudes Sci.} \textbf{100} (2004), 153--170.

\bibitem{BozinMateljevic}
V. Bozin and M. Mateljevic,
Bounds for Jacobian of harmonic injective mappings in \(n\)-dimensional
space,
\emph{Filomat} \textbf{29} (2015), no. 9, 2119--2124.

\bibitem{AxlerBourdonRamey}
S. Axler, P. Bourdon and W. Ramey,
\emph{Harmonic Function Theory},
2nd ed., Graduate Texts in Mathematics, vol. 137, Springer-Verlag,
New York, 2001.

\bibitem{Cheng1976}
S.-Y. Cheng,
Eigenfunctions and nodal sets,
\emph{Comment. Math. Helv.} \textbf{51} (1976), 43--55.

\bibitem{GleasonWolff}
S. Gleason and T. H. Wolff,
Lewy's harmonic gradient maps in higher dimensions,
\emph{Comm. Partial Differential Equations} \textbf{16} (1991), no. 12,
1925--1968.

\bibitem{Kalaj2015}
D. Kalaj,
Muckenhoupt weights and a Lindelof theorem for harmonic mappings,
\emph{Adv. Math.} \textbf{280} (2015), 301--321.

\bibitem{KalajSaksman2019}
D. Kalaj and E. Saksman,
Quasiconformal maps with controlled Laplacian,
\emph{J. Analyse Math.} \textbf{137} (2019), 251--268.

\bibitem{KaufmanTysonWu}
R. Kaufman, J. T. Tyson and J.-M. Wu,
Smooth quasiregular maps with branching in \(\R^n\),
\emph{Publ. Math. Inst. Hautes \`Etudes Sci.} \textbf{101} (2005), 209--241.

\bibitem{Lewy1936}
H. Lewy,
On the non-vanishing of the Jacobian in certain one-to-one mappings,
\emph{Bull. Amer. Math. Soc.} \textbf{42} (1936), no. 10, 689--692.

\bibitem{Lewy1968}
H. Lewy,
On the non-vanishing of the Jacobian of a homeomorphism by harmonic
gradients,
\emph{Ann. of Math. (2)} \textbf{88} (1968), no. 3, 518--529.

\bibitem{Martin2016}
G. J. Martin,
Harmonic degree one maps are diffeomorphisms: Lewy's theorem for curved
metrics,
\emph{Trans. Amer. Math. Soc.} \textbf{368} (2016), no. 1, 647--658.

\bibitem{MRV}
O. Martio, S. Rickman and J. V\"ais\"al\"a,
Topological and metric properties of quasiregular mappings,
\emph{Ann. Acad. Sci. Fenn. Ser. A I} \textbf{488} (1971), 1--31.

\bibitem{Pavlovic2002}
M. Pavlovic,
Boundary correspondence under harmonic quasiconformal homeomorphisms of the
unit disk,
\emph{Ann. Acad. Sci. Fenn. Math.} \textbf{27} (2002), 365--372.

\bibitem{Reshetnyak}
Yu. G. Reshetnyak,
\emph{Space Mappings with Bounded Distortion},
Translations of Mathematical Monographs, vol. 73, American Mathematical
Society, Providence, RI, 1989.

\bibitem{Rickman}
S. Rickman,
\emph{Quasiregular Mappings},
Ergebnisse der Mathematik und ihrer Grenzgebiete (3), vol. 26,
Springer-Verlag, Berlin, 1993.

\bibitem{Saunders}
D. J. Saunders,
\emph{The Geometry of Jet Bundles},
London Mathematical Society Lecture Note Series, vol. 142, Cambridge
University Press, Cambridge, 1989.

\bibitem{Vaisala}
J. Vaisala,
\emph{Lectures on \(n\)-Dimensional Quasiconformal Mappings},
Lecture Notes in Mathematics, vol. 229, Springer-Verlag, Berlin, 1971.

\bibitem{Wood1991}
J. C. Wood,
Lewy's theorem fails in higher dimensions,
\emph{Math. Scand.} \textbf{69} (1991), 166.

\bibitem{Zorich1967}
V. A. Zorich,
Homeomorphism of quasiconformal space mappings,
\emph{Soviet Math. Dokl.} \textbf{8} (1967), 1039--1042, translation from
\emph{Dokl. Akad. Nauk SSSR} \textbf{176} (1967), 31--34.

\end{thebibliography}
\end{document}